\DeclareMathAlphabet{\mathpzc}{OT1}{pzc}{m}{it}
\newcommand{\im}{\operatorname{Im}}
\newcommand{\tc}{\mathbf{TC}}
\newcommand{\cat}{\mathbf{cat}}
\newcommand{\rk}{\mathbf{wrk}}
\renewcommand{\int}{\operatorname{int}}
\newcommand{\ev}{\operatorname{ev}}
\newcommand{\Ker}{\operatorname{Ker}}
\newcommand{\ui}{[0,1]}
\newcommand{\fw}{\operatorname{w}}
\newcommand{\sw}{\bigcurlyvee}
\newcommand{\bbn}{\mathbb{N}}
\newcommand{\bbr}{\mathbb{R}}
\newcommand{\ov}{\overline}
\newtheorem{theorem}{Theorem}[section]
\newtheorem{lemma}[theorem]{Lemma}
\newtheorem{proposition}[theorem]{Proposition}
\newtheorem{corollary}[theorem]{Corollary}
\theoremstyle{definition}\newtheorem{definition}[theorem]{Definition}
\newtheorem{example}[theorem]{Example}
\newtheorem{remark}[theorem]{Remark}
\title{Motion Planning on One-Dimensional Peano Continua}
\author[J. Brazas]{Jeremy Brazas}
\address{West Chester University\\ Department of Mathematics\\
West Chester, PA 19383, USA}
\email{jbrazas@wcupa.edu}
\author[P. Pave\v si\' c]{Petar Pave\v si\' c}
\address{Faculty of Mathematics and Physics, University of Ljubljana and Institute of Mathematics, Physics and Mechanics, Ljubljana}
\email{petar.pavesic@fmf.uni-lj.si}
\thanks{The authors are grateful to the referees for many helpful suggestions and the vital correction to Theorem \ref{mainthm4}, which originally claimed a single value in the middle case. This research was supported by the Slovenian Research Agency program P1-0292 and grant J1-4001.}
\begin{document}

\begin{abstract}
We study the Lusternik-Schnirelmann category and topological complexity of 1-dimensional spaces.
We define both invariants as lengths of suitable closed filtrations, as opposed to a more common
definition based on open covers. Our main results provide a precise description of $\cat(X)$ and 
$\tc(X)$ for certain 1-dimensional Peano continua $X$ in terms of the wildness rank of $X$. 
A surprising consequence is that $\cat(X)$ and $\tc(X)$ of a general 
1-dimensional space $X$ can be arbitrarily high, which is in stark contrast with 
the analogous results for 1-dimensional CW-complexes.
\end{abstract}

\maketitle

\section{Introduction}\label{sec:Introduction}

Is there a continuous rule that, to every pair of points $x,x'$ in a space $X$, assigns a path in $X$
starting at $x$ and ending at $x'$? This question, motivated by applications in robot motion planning,
was a starting point for Michael Farber's theory of topological complexity. Farber observed that a
continuous rule valid for all pairs $(x,x')\in X\times X$ exists if and only if $X$ is 
contractible. He thus defined the topological complexity $\tc(X)$ of motion planning in $X$ as the 
minimal integer $n$ for which
there exists an open cover $U_0,\ldots,U_n$ of $X\times X$ such that each $U_i$ admits a continuous 
motion plan, i.e. a continuous choice of a path connecting $x$ and $x'$ for every $(x,x')\in U_i$. (Note the 
so-called \emph{normalized} indexing with respect to which a contractible space $X$ has $\tc(X)=0$.) Farber 
proved that $\tc(X)\le 2\dim(X)$ if $X$ is a connected CW-complex. However, if $X$ is not locally 
simply-connected, then the requirement that the domains $U_i$ in the covering of $X\times X$ are open implies 
that  $\tc(X)$ is infinite. Thus, in order to discuss motion planning on more general spaces 
like fractals or 1-dimensional continua, we are led to work with an alternative definition of topological
complexity  
based on closed filtrations. This definition yields the same value of $\tc(X)$ for CW-complexes
but is more suitable for spaces with intricate local features. The resulting theory is very satisfactory and we were able to obtain precise values for the topological complexity of many 1-dimensional Peano continua.

To describe our results, we first recall that a point of $x\in X$ is said to be \emph{wild} if every open neighborhood of $x$ contains a loop that is essential, i.e. not null-homotopic, in $X$. The subspace of all wild points of $X$ is denoted $\fw(X)$. The wild set $\fw(X)$ may have its own wild points, so we may consider the descending sequence of closed 
subspaces
$X=\fw^0(X)\supseteq \fw^1(X)\supseteq \fw^2(X)\supseteq\cdots$, where $\fw^{k+1}(X):=\fw(\fw^k(X))$.
We will say that $X$ is \emph{$\fw$-stable} if $\fw^k(X)$ is locally path-connected for all $k$.
Furthermore, we define the \emph{wildness rank} $\rk(X)$ of $X$ as the smallest integer $n$ such 
that $\fw^n(X)=\emptyset$ (and $\rk(X)=\infty$ if there is no such $n$). 

The following are our principal results about the Lusternik-Schnirelmann category $\cat(X)$ and the topological complexity $\tc(X)$ of 1-dimensional Peano continua. Note that a path-connected one-dimensional space is simply-connected if and only if, it does not contain a simple closed curve. 

\begin{theorem}\label{mainthm2}
Assume $X$ is a $\fw$-stable one-dimensional Peano continuum and $\rk(X)=n<\infty$. Then
\[\cat(X)=\begin{cases} n-1 & \text{ if }\fw^{n-1}(X)\text{ contains no simple closed curve,}\\
n & \text{ otherwise}.
\end{cases}
\]
\end{theorem}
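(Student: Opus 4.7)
The plan is to establish the equality by proving a matching upper bound $\cat(X)\le n$ (or $n-1$ in the dendrite subcase) and a corresponding lower bound.

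For the upper bound, the natural candidate filtration is provided by the iterated wild hierarchy itself: the closed chain $X=\fw^0(X)\supseteq\fw^1(X)\supseteq\cdots\supseteq\fw^n(X)=\emptyset$, reindexed as $F_k:=\fw^{n-k}(X)$ to form an increasing closed filtration $\emptyset=F_0\subseteq F_1\subseteq\cdots\subseteq F_n=X$. I would show that each stratum $\ov{F_k\setminus F_{k-1}}$ is null-homotopic in $X$. By $w$-stability, $\fw^{n-k}(X)$ is locally path-connected, and its tame points admit simply-connected neighborhoods in $\fw^{n-k}(X)$. Using the graph-like structure of the connected components of $\fw^{n-k}(X)\setminus \fw^{n-k+1}(X)$, I would construct a deformation retracting this stratum onto $\fw^{n-k+1}(X)=F_{k-1}$, thereby null-homotoping the layer in $X$. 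If $\fw^{n-1}(X)$ contains no simple closed curve then, being a $1$-dimensional locally path-connected continuum without cycles, it is a dendrite and hence contractible in $X$; the innermost layer can be absorbed into its neighbor, shortening the filtration to length $n-1$.

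For the lower bound I would proceed by induction on $n=\rk(X)$, with the base case $n\le 1$ handled directly from graph theory. The inductive step relies on a \emph{descent principle}
\[
\cat(X)\ge\cat(\fw(X))+1.
\]
Granted this, iteration gives $\cat(X)\ge\cat(\fw^{n-1}(X))+(n-1)$, and since $\fw^{n-1}(X)$ is a $1$-dimensional locally path-connected continuum with no wild points, $\cat(\fw^{n-1}(X))$ equals $0$ if there is no simple closed curve (the space is a dendrite) and $1$ otherwise (the space is homotopy equivalent to a non-simply-connected graph). This reproduces both branches of the stated dichotomy.

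The principal obstacle is the descent principle: intersecting a closed categorical filtration of $X$ of length $m$ with $\fw(X)$ gives, a priori, only a categorical filtration of $\fw(X)$ of the same length $m$, not $m-1$. To gain the extra step one must show that at least one layer of the filtration on $X$ can be arranged either to miss $\fw(X)$ or to be absorbed into an adjacent layer upon restriction. The plan is to exploit $w$-stability to construct a deformation retraction from an open neighborhood of $\fw(X)$ in $X$ onto $\fw(X)$, obtained by collapsing each connected component of $X\setminus\fw(X)$ along its tree or graph structure onto its frontier in $\fw(X)$, and then compose this retraction with a null-homotopy of a suitable layer. The delicate point is continuity across the potentially infinitely many components of $X\setminus\fw(X)$ whose diameters accumulate to zero along $\fw(X)$; ensuring that the assembled retraction is globally continuous is precisely where the $w$-stability of $X$, i.e., the local path-connectedness of $\fw(X)$, plays the decisive role.
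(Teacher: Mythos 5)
Your overall architecture (an upper bound from a filtration adapted to the wild hierarchy, a lower bound by descending through iterated wild sets) matches the paper's, but both halves have genuine gaps. For the upper bound, the raw filtration $F_k=\fw^{n-k}(X)$ does not work: a stratum $\fw^{j}(X)\setminus\fw^{j+1}(X)$ can contain a simple closed curve lying entirely in one of its components (e.g.\ when $\fw^j(X)$ is a Hawaiian earring joined by an arc to a disjoint circle), and by the $\pi_1$-injectivity of subspace inclusions in one-dimensional spaces (Lemma \ref{onedimwildinclusion}) that curve is essential in $X$, so the stratum is not categorical. Deforming the stratum \emph{onto} $\fw^{j+1}(X)$, as you propose, does not null-homotope it, since $\fw^{j+1}(X)$ is not contractible in $X$. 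The paper instead interleaves the wild sets with slightly larger closed sets $Y_{j+1}\subseteq\fw^{j}(X)\subseteq Y_j$ produced by the deforestation/structure theorem (Theorem \ref{structuretheorem}, Lemma \ref{stablfiltrationlemma}), so that each $Y_{j-1}\setminus Y_j$ is a disjoint union of open \emph{contractible} sets; and when $\fw^{n-1}(X)$ does contain a simple closed curve an extra layer (splitting the core graphs of $Y_{n-1}$ along maximal spanning trees) is needed to reach $\cat(X)\le n$. Your $n$-stratum filtration, even if it worked, would give $\cat(X)\le n-1$ in both branches, which is false; and your shortened version would give $n-2$ in the dendrite branch, so the indexing is also off. (Minor: $\fw^{n-1}(X)$ is a finite disjoint union of dendrites, not necessarily one dendrite.)

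For the lower bound, the descent principle $\cat(X)\ge\cat(\fw(X))+1$ with the \emph{absolute} category of $\fw(X)$ is both unproven and not what a categorical filtration of $X$ hands you: restricting a filtration of $X$ to $\fw(X)$ yields sets categorical in $X$, not in $\fw(X)$, so at best you get a statement about $\cat_X(\fw(X))$, and chaining absolute categories across the iteration is then not justified. More importantly, your proposed mechanism for gaining the extra step --- a deformation retraction of a neighborhood of $\fw(X)$ onto $\fw(X)$ --- does not force any layer to miss $\fw(X)$ and is not the right tool. The working argument (the paper's Lemmas \ref{steplemma} and \ref{rankboundlemma}) is again one-dimensional $\pi_1$-injectivity: if $F_m\setminus F_{m-1}$ is categorical in $X$, it contains no loop essential in $X$, hence for any $Y\subseteq F_m$ every wild point of $Y$ lies in $F_{m-1}$; iterating gives $\fw^{j}(X)\subseteq F_{m-j}$, so $\rk(X)\le m+1$, and $\le m$ when $\fw^{n-1}(X)$ carries a simple closed curve (which would otherwise place an essential loop inside the categorical set $F_0$). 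Note this lower bound requires no $w$-stability at all; your claim that local path-connectedness of $\fw(X)$ plays the decisive role there is a misattribution --- $w$-stability is needed only on the upper-bound side, to make each $\fw^{j}(X)$ a finite disjoint union of Peano continua so the structure theorem can be applied recursively.
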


\begin{theorem}\label{mainthm4}
Assume $X$ is a $\fw$-stable one-dimensional Peano continuum and $\rk(X)=n<\infty$. Then
\[\tc(X)=\begin{cases} 2 n-2 & \text{ if }\fw^{n-1}(X)\text{ contains no simple closed curve,}\\
 2 n-1\text{ or }2n &  \text{ if } \fw^{n-1}(X)\text{ contains one simple closed curve,} \\
 2 n &  \text{ otherwise.}
\end{cases}
\]
\end{theorem}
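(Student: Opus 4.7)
My plan is to combine the general bounds $\cat(X)\le\tc(X)\le 2\cat(X)$ with Theorem \ref{mainthm2}, and then supply only the arguments the sandwich misses. Substituting $\cat(X)=n-1$ gives $\tc(X)\le 2n-2$ in the first case; substituting $\cat(X)=n$ gives $\tc(X)\le 2n$ in the third case. Both match the target equalities, so in these two cases only the matching lower bounds remain. In the intermediate case both an upper bound of $2n-1$ and a matching lower bound require additional work.

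For the intermediate upper bound $\tc(X)\le 2n-1$, I would start from an optimal closed categorical filtration of $X$ of length $n$, as produced in the proof of Theorem \ref{mainthm2}, and form the naive product filtration of $X\times X$, which has length $2n$. The key observation is that the top piece of this product contains the square $C\times C$, where $C\subseteq\fw^{n-1}(X)$ is the unique simple closed curve. Since $\tc(S^1)=1$, the subspace $C\times C$ splits into two closed sets each admitting a continuous motion planner; splicing this splitting into the product filtration saves one step and produces a closed $\tc$-filtration of length $2n-1$.

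The lower bounds I would establish by induction on $n$. The base case $n=1$ reduces to Farber's classical computation of $\tc$ for finite graphs. For the inductive step I would localize around a wild point $x\in\fw^{n-1}(X)$: since every neighborhood of $x$ contains loops essential in $X$, any closed piece of $X\times X$ meeting the diagonal at $(x,x)$ and supporting a continuous path section must resolve those loops, introducing at least two further strata compared to the filtration of $\fw(X)$ (which has wildness rank $n-1$). Stacking these obstructions across the $n$ descending wild strata gives $\tc(X)\ge 2(n-1)$; the presence of one, resp.\ two or more, simple closed curves in $\fw^{n-1}(X)$ then contributes one, resp.\ two, further Farber-type zero-divisor obstructions, yielding $2n-1$ or $2n$ respectively.

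The hardest step will be the intermediate case, where one must distinguish $2n-1$ from both $2n-2$ and $2n$. The upper bound depends on the splitting of $C\times C$, and the lower bound requires an obstruction argument sensitive enough to detect the presence of \emph{exactly one} simple closed curve at the deepest wild stratum, not merely nontrivial local $\pi_1$. Tracking a single loop across the descending filtration $\fw^0(X)\supseteq\cdots\supseteq\fw^{n-1}(X)$ while ruling out a length-$(2n-2)$ closed cover is the main technical difficulty.
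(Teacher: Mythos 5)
Your upper bounds in the first and third cases are fine: $\tc(X)\le\cat(X\times X)\le 2\cat(X)$ together with Theorem \ref{mainthm2} gives $2n-2$ and $2n$ immediately, a slightly slicker route than the explicit product filtration used in Lemma \ref{tcupperbound2lemma}. Your intermediate upper bound is the right idea but does not parse as stated: if you take the product of the \emph{optimal} length-$n$ categorical filtration of $X$, every stratum of the resulting length-$2n$ filtration of $X\times X$ is already categorical and there is nothing to split. What actually saves a step is to use the shorter filtration $F_1\subseteq\cdots\subseteq F_n=X$ whose bottom level $F_1$ deformation retracts onto $\fw^{n-1}(X)$ (and is \emph{not} categorical), and to replace the bottom square $F_1\times F_1$ by two motion-plannable closed pieces pulled back along the retraction $F_1\to G\simeq S^1\sqcup(\text{points})$; one must also account for the components of $\fw^{n-1}(X)$ other than the curve $C$.

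The genuine gap is in your lower bounds, which is where the substance of the theorem lies. Your inductive step asserts that each wild stratum ``introduces at least two further strata,'' but gives no mechanism; that assertion is precisely what must be proved. The paper's mechanism is Lemma \ref{tcobstructionlemma}: if $A\subseteq B$ are closed in $X\times X$, $B\backslash A$ admits a motion plan, and $Y\times Z\subseteq B$, then $\fw(Y\times Z)=Y\times\fw(Z)\cup\fw(Y)\times Z\subseteq A$. Applied $2(n-1)$ times to a putative filtration $A_0\subseteq\cdots\subseteq A_m$ (first descending in one coordinate, then in the other), this forces $\fw^{n-1}(X)\times\fw^{n-1}(X)\subseteq A_{m-2n+2}$, whence $m\ge 2n-2$; that lemma in turn rests on the elementary obstruction that no set containing $\im(\alpha)\times\{x\}$ for an essential loop $\alpha$ admits a motion plan (Lemma \ref{torusobstructionlemma}). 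Without identifying this (or an equivalent) lemma, your induction has no content. The refinement distinguishing $2n-1$ from $2n$ is also not just ``one more zero-divisor'': the set $A_{m-2n+2}$ merely \emph{contains} $G\times G$ for a graph $G\subseteq\fw^{n-1}(X)$ with two cycles, and $G$ may be disconnected, so one needs Proposition \ref{a2prop}, which joins two circle components by an arc, uses that the resulting two-cycle graph $\widetilde C$ is a retract of $X$ (Lemma \ref{retractlemma}), and invokes a \emph{relative} cup-length bound for the inclusion $C\hookrightarrow\widetilde C$. Your sketch does not detect these complications, and in particular does not explain how a zero-divisor argument applies to closed subsets of $X\times X$ rather than of $G\times G$.
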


For comparison, if $X$ is a one-dimensional CW-complex, then $\cat(X)\in\{0,1\}$
and $\tc(X)\in\{0,1,2\}$. The authors expect that the middle case in Theorem \ref{mainthm4} is always $\tc(X)=2n-1$ whenever $\fw^{n-1}(X)$ contains exactly one simple closed curve. We give an ad-hoc computation in Example \ref{wildcircleexample} as first evidence of this conjecture. However, this improvement remains unproved and will likely require new technical methods.

The organization of the paper is as follows. In the next section we introduce the relevant definitions 
of the Lusternik-Schnirelmann category and topological complexity and establish their properties. 
In Section 3, we discuss some special properties of one-dimensional spaces, define the wild set, and describe some standard decompositions. In Section 4, we study iterated wild sets, define the wildness rank, 
and use it to prove Theorem \ref{mainthm2}. Finally, in Section 5, we give estimates for topological 
complexity of one-dimensional spaces and prove Theorem \ref{mainthm4}.

\section{Lusternik-Schnirelmann category and topological complexity}\label{secLScatandTC}

Standard references for LS-category and topological complexity are respectively 
\cite{CLOT} and \cite{Farber2008}. We will use freely the results from those monographs and we will try to use compatible notation whenever possible.

Throughout, all spaces will be assumed to be Hausdorff. Given a space $X$ we will denote by 
$PX$ the space of paths $\ui\to X$, endowed with the compact-open topology. The evaluation maps 
at the points 0,1 or both 0 and 1 are Hurewicz fibrations and are denoted respectively 
$\ev_0,\ev_1\colon PX\to X$ and $\ev_{0,1}\colon PX\to X\times X$.
It is well-known that $\ev_0$ is a homotopy equivalence with a homotopy inverse given by the map 
$c\colon X\to PX$ that to each $x\in X$ assigns the constant path $c_x$. The fibre of $\ev_0$ over a point 
$x\in X$ consists of all paths in $X$ that begin at $x$ and will be denoted $P_\ast X$. The paths
in $P_\ast X$ can be continuously deformed to their initial point, therefore $P_\ast X$ is contractible. 

Our definitions of LS-category and topological complexity are based on closed filtrations and 
are stated as follows. 
A subspace $A\subseteq X$ is said to be \textit{categorical in $X$} if the inclusion map $A\to X$ is 
homotopic to a constant map or equivalently, if the projection $\ev_1\colon P_\ast X\to X$
admits a section over $A$. By default, the empty set is categorical in $X$.  

\begin{definition}\label{lscatdef}
Let $X$ be a path-connected space. The \textit{(normalized) Lusternik-Schnirelmann category} 
$\cat(X)$ of $X$ is the smallest integer $n$ for which there exists a filtration 
$$\emptyset=F_{-1}\subset F_0\subset F_1\subset F_2\subset \cdots \subset F_n=X$$ 
by closed subsets of $X$ such that the difference $F_{j}\backslash F_{j-1}$ is categorical in $X$ 
for all $j\in\{0,1,2,\dots, n\}$. If no such $n$ exists, then $\cat(X)=\infty$.
\end{definition}

A subspace $A\subseteq X\times X$ is said to \textit{admit a motion plan} in $X$ if the 
projections $p_1,p_2\colon A\to X$ are homotopic, or equivalently, if the projection 
$\ev_{0,1}\colon PX\to X\times X$ admits a section over $A$. 
By default, the empty set admits a motion plan.  

\begin{definition}\label{tcdef}
Let $X$ be a path-connected space. The \textit{(normalized) topological complexity} $\tc(X)$ of $X$ is the smallest integer
$n$ for which there exists a filtration 
$$\emptyset=F_{-1}\subset F_0\subset F_1\subset F_2\subset \cdots \subset F_n=X\times X$$
by closed subsets of $X\times X$ such that $F_{j}\backslash F_{j-1}$ admits a motion plan in $X$ for all
$j\in\{0,1,2,\dots, n\}$. If no such $n$ exists, then $\tc(X)=\infty$.
\end{definition}

The above definitions for $\cat(X)$ and $\tc(X)$ in terms of closed filtrations is somewhat different than the standard definition given in terms of suitable open covers (cf. \cite{CLOT} and \cite{Farber2008}) but the idea of using finite filtrations to define these invariants is not new. For path-connected, paracompact spaces $X$, our definition of $\cat(X)$ is analogous to the definition of $\cat(X)$ given by I.M James \cite{James} in terms of finite filtrations of \textit{open} subsets. For Euclidean Neighborhood Retract (ENR) spaces $X$, it is shown in \cite[Proposition 4.12]{Farber2008} that our definition of $\tc(X)$ in terms of closed filtrations agrees with the standard one. If $X$ is a CW-complex, or more generally an Absolute Neighborhood Retract (ANR), the closed filtration definition of both $\cat(X)$ and $\tc(X)$ agrees with the standard one \cite{GarciaCalcines,Srinivasan2}. However, for more general spaces, the values may differ.

Our choice of definition using filtrations by closed subsets is motivated by the fact that any open set in $X$ that contains a wild point of $x$ cannot be categorical in $X$. Hence, the classical definitions of $\cat(X)$ and $\tc(X)$ give infinite value when $X$ contains a wild point. A similar difficulty appears in the study of the topological complexity relative to a map \cite{Pavesic} and in geodesic motion planning \cite{Recio-Mitter}. In both cases, a way out is to base the definitions on closed filtrations. 

Since our definitions are based on closed filtrations we must carefully verify which properties of $\cat(X)$ and $\tc(X)$ that appear in the literature are still valid. We take care of this in the following theorems. 

\begin{theorem}\label{thmcatproperties}
The following statements are valid for arbitrary path-connected spaces $X$ and $Y$.
\begin{enumerate}
\item
$\cat(X)=0$ if and only if $X$ is contractible.
\item
If $X\simeq Y$, then $\cat(X)=\cat(Y)$.
\item
If $X$ contains a closed path-connected subspace $A$ such that $X\backslash A$ can be deformed to 
a discrete subspace of $X$, then
$\cat(X)\le\cat(A)+1$.
\item
$\cat(X\times Y)\le \cat(X)+\cat(Y)$. 
\item 
If $X$ is a locally compact subspace of some Euclidean Neighbourhood Retract, then \
$\cat(X)\ge \text{cup-length} \big(\Ker \check H^\ast(X)\to \check H^\ast(\ast)\big)$.
\end{enumerate}
\end{theorem}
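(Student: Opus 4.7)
The plan is to treat the five parts in order; (1)--(4) are essentially adaptations of the classical open-cover arguments to our closed-filtration framework, while (5) is the main technical obstacle.

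For (1), the filtration witnessing $\cat(X)=0$ is forced to be $\emptyset\subset X$, which demands $X$ itself be categorical in $X$, i.e.\ contractible; the converse is immediate. For (2), given a homotopy equivalence $f\colon X\to Y$ with quasi-inverse $g$ and a categorical closed filtration $\{G_j\}$ of $Y$, the plan is to pull back to $F_j:=f^{-1}(G_j)$. The inclusion $F_j\setminus F_{j-1}\hookrightarrow X$ is homotopic, via $gf\simeq\mathrm{id}_X$, to the composition $F_j\setminus F_{j-1}\to G_j\setminus G_{j-1}\hookrightarrow Y\xrightarrow{g}X$, whose middle arrow is null-homotopic; the symmetric argument yields the reverse inequality.

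For (3), I would start with a categorical filtration $\emptyset=F_{-1}\subset\cdots\subset F_k=A$ realising $k=\cat(A)$ and extend by $F_{k+1}=X$. Each $F_j\setminus F_{j-1}$ is null-homotopic in $A$ and hence, by post-composition with $A\hookrightarrow X$, null-homotopic in $X$. For the top stratum, the hypothesis gives a deformation of $X\setminus A$ onto a discrete $D\subset X$; this deformation is locally constant onto $D$, and on each clopen fibre one slides to a fixed basepoint along a chosen path in the path-connected space $X$, producing a global null-homotopy. For (4), with $\cat(X)=m$ and $\cat(Y)=n$ realised by filtrations $\{F_i\}$ and $\{G_j\}$, I would set $H_k:=\bigcup_{i+j\le k}F_i\times G_j$; each $H_k$ is closed, and the stratum $H_k\setminus H_{k-1}$ is the disjoint union of the products $(F_i\setminus F_{i-1})\times(G_j\setminus G_{j-1})$ over $i+j=k$. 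The key observation is that each such product is clopen in the stratum, since it coincides with its intersection with the open set $(X\setminus F_{i-1})\times(Y\setminus G_{j-1})$; each product is then categorical in $X\times Y$ as a product of null-homotopies, and choosing all null-homotopies to end at a common basepoint lets one glue them across the clopen decomposition.

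The hard part is (5), because the classical cup-length argument uses a cover by open categorical sets to lift classes of $\ker\bigl(\check H^\ast(X)\to\check H^\ast(\ast)\bigr)$ to relative cohomology, whereas our filtration is closed. The plan is to exploit the local compactness of $X$ inside the ambient ENR to thicken each closed stratum $F_j\setminus F_{j-1}$ to a nested family of open neighbourhoods in $X$ that still admit null-homotopies into $X$; here the ENR extension property is what permits extending a categorical null-homotopy on a closed set to a small neighbourhood. Since $\check H^\ast(X)$ is the direct limit of singular cohomology over such neighbourhood systems, one recovers an open cover by categorical sets in the \v{C}ech sense, and the standard argument applies: any $n+1$ classes in the kernel lift to relative \v{C}ech classes over the $n+1$ thickenings, and their cup product lies in $\check H^\ast(X,X)=0$ provided $n\ge\cat(X)$. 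The delicate step is arranging the thickenings so that the homotopies to a basepoint survive the passage to the neighbourhoods, which is precisely what the ENR hypothesis is designed to guarantee.
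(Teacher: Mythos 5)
Parts (1)--(4) of your proposal are correct and essentially follow the paper's route: (1) is definitional, (2) is the standard pullback/pushforward of filtrations, (3) rests on the observation that a set deformable to a discrete subspace is categorical, and in (4) your remark that each product $(F_i\backslash F_{i-1})\times(G_j\backslash G_{j-1})$ equals the intersection of the stratum with the open set $(X\backslash F_{i-1})\times(Y\backslash G_{j-1})$ is exactly the ``separated terms'' argument the paper uses to glue the null-homotopies. No complaints there.

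Your plan for (5), however, contains a genuine gap, and in fact the proposed strategy cannot work. You want to thicken each categorical stratum to an open neighbourhood in $X$ that is still categorical, and then run the classical open-cover cup-length argument. But a null-homotopy of a closed set $F\subseteq X$ takes its values in $X$, and $X$ is only assumed to be a locally compact subspace of an ENR --- it is not itself assumed to be an ANR, so there is no homotopy extension available \emph{within} $X$. (Extending into the ambient ENR is useless, since the extended homotopy leaves $X$.) The failure is not a technicality: take $X$ to be the earring space $\bbh\subseteq\bbr^2$ and $F=\{x_0\}$ its wild point. Then $F$ is closed and categorical, yet \emph{every} open neighbourhood of $x_0$ in $\bbh$ contains loops essential in $\bbh$ and so is not categorical. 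If your thickening step were valid, the closed-filtration category would coincide with the open-cover category for all such spaces, which is precisely the distinction this paper is built on (the open-cover invariants are infinite for wild spaces).

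The correct argument --- which the paper does not reproduce but delegates to the reference \cite{Pavesic1} --- stays with the closed sets throughout: one uses the fact that a class in $\Ker\big(\check H^\ast(X)\to\check H^\ast(\ast)\big)$ restricts to zero on a categorical subset and hence lifts to relative \v Cech cohomology, and then multiplies these lifts using the relative cup product $\check H^\ast(X,A)\otimes\check H^\ast(X,B)\to\check H^\ast(X,A\cup B)$; the hypotheses (local compactness, embeddability in an ENR) are what guarantee the tautness and strong excision properties of \v Cech cohomology needed to handle the locally closed strata $F_j\backslash F_{j-1}$, not an extension of homotopies to open neighbourhoods. You should either carry out that argument or cite it; as written, your step ``one recovers an open cover by categorical sets'' is false.
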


In the last statement, $\check H^\ast$ denotes the \v Cech-cohomology functor with integer coefficients and the cup-length of an ideal $I$ in a cohomology ring is, by definition, the maximal number of factors in a non-zero product of elements of $I$. It is well-known that \v Cech-cohomology agrees with standard singular homology on paracompact, locally contractible spaces (in particular, on CW-complexes) \cite{Spanier66}.
 
\begin{theorem}\label{thmtcproperties}
The following statements are valid for arbitrary path-connected spaces $X$ and $Y$.
\begin{enumerate}
\item
$\tc(X)=0$ if and only if $X$ is contractible.
\item
If $X\simeq Y$, then $\tc(X)=\tc(Y)$.
\item
$\cat(X)\le\tc(X)\le \cat(X\times X)$.
\item
$\tc(X\times Y)\le \tc(X)+\tc(Y)$. 
\item 
If $X$ is a locally compact subspace of some Euclidean Neighbourhood Retract, then \
$\tc(X)\ge \text{cup-length}\big(\Ker \Delta^\ast\colon\check H^\ast(X\times X)\to \check H^\ast(X)\big)$.
\end{enumerate}
\end{theorem}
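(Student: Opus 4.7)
The plan is to verify each of the five properties in order, mimicking the standard arguments from \cite{Farber2008} but replacing open covers by closed filtrations wherever needed. For (1), if $\tc(X)=0$ then $\ev_{0,1}$ has a global section $s\colon X\times X\to PX$, and $t\mapsto s(x_0,\cdot)(t)$ produces a contraction of $X$ to $x_0$; conversely, a contraction $H\colon X\times\ui\to X$ lets us define a global motion plan by concatenating $H(x,\cdot)$ with the reverse of $H(x',\cdot)$. For (2), a homotopy equivalence $f\colon X\to Y$ with homotopy inverse $g$ pulls a filtration of $Y\times Y$ back through $g\times g$ to closed subsets of $X\times X$, and motion plans transport through $f$ up to a homotopy tail; the symmetric argument gives the reverse inequality.

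For (3), the inequality $\cat(X)\le\tc(X)$ follows by intersecting a closed filtration of $X\times X$ with the slice $\{x_0\}\times X\cong X$: a section of $\ev_{0,1}$ restricted to $\{x_0\}\times A$ is precisely a section of $\ev_1\colon P_\ast X\to X$ over $A$, so $A$ is categorical in $X$. Conversely, if $A\subseteq X\times X$ is categorical via a nullhomotopy $H\colon A\times\ui\to X\times X$ from the inclusion to the constant $(x_0,x_0)$, then for each $(x,x')\in A$ the paths $t\mapsto \pi_1 H((x,x'),t)$ and $t\mapsto \pi_2 H((x,x'),t)$ concatenate (with the second reversed) into a continuous motion plan on $A$, giving $\tc(X)\le\cat(X\times X)$.

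For (4), given filtrations $\{F_i\}_{i\le m}$ on $X\times X$ and $\{G_j\}_{j\le n}$ on $Y\times Y$, define, after reshuffling factors of $(X\times Y)\times(X\times Y)$,
\[
H_k=\bigcup_{i+j=k}F_i\times G_j,
\]
each of which is closed. A short verification shows
\[
H_k\setminus H_{k-1}=\bigsqcup_{i+j=k}(F_i\setminus F_{i-1})\times(G_j\setminus G_{j-1}),
\]
and each summand equals $(F_i\times G_j)\cap(H_k\setminus H_{k-1})$, hence is clopen in that subspace. Each summand admits a motion plan obtained by combining the given motion plans on the factors, and since the summands are clopen they glue into a continuous motion plan on $H_k\setminus H_{k-1}$, yielding $\tc(X\times Y)\le m+n$.

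The delicate part is (5). A motion plan on $A\subseteq X\times X$ provides, via evaluation at each $t\in\ui$, a homotopy from $\pi_1|_A$ to $\pi_2|_A$, so the inclusion $A\hookrightarrow X\times X$ factors up to homotopy through $\Delta$; hence $u|_A=0$ in $\check H^\ast(A)$ for every $u\in\Ker\Delta^\ast$. Given a nonzero cup product $u_0\smile\cdots\smile u_n$ of classes in $\Ker\Delta^\ast$ and a closed filtration $\emptyset=F_{-1}\subset\cdots\subset F_n=X\times X$ realizing $\tc(X)\le n$, the strategy is to show inductively that $u_0\smile\cdots\smile u_j|_{F_j}=0$: we lift $u_j|_{F_j}$ to a relative class in $\check H^\ast(F_j,F_{j-1})$ using the vanishing on $F_j\setminus F_{j-1}$, cup with the inductively constructed relative lift of $u_0\smile\cdots\smile u_{j-1}$, and arrive at a nonzero element of $\check H^\ast(X\times X,X\times X)=0$. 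The main obstacle is making the relative lifting step rigorous: since the vanishing is known only on the locally closed difference $F_j\setminus F_{j-1}$, one needs the tautness of \v Cech cohomology for closed pairs together with the ability to replace closed sets by shrinking open neighborhoods, and this is precisely where the hypothesis that $X$ be locally compact in an ENR is used.
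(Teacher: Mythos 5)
Parts (1)--(4) of your argument are correct and essentially coincide with what the paper does: the paper writes out only the product inequality in detail (for $\cat$, declaring the $\tc$ case analogous) and defers (1)--(3) to standard references, and your clopen-decomposition observation in (4) is the same as the paper's remark that the summands of $H_k\backslash H_{k-1}$ are separated. Two small slips worth fixing: in (2) the filtration of $Y\times Y$ is pulled back through $f\times f\colon X\times X\to Y\times Y$, not through $g\times g$; and in (3) the nullhomotopy of $A\hookrightarrow X\times X$ may end at an arbitrary point $(a,b)$ rather than at a diagonal point, so the concatenation needs a fixed path from $a$ to $b$ inserted in the middle (path-connectedness supplies it).

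The genuine problem is in (5), and it is not only the incompleteness you yourself flag. The inductive step as you state it --- ``lift $u_j|_{F_j}$ to a relative class in $\check H^\ast(F_j,F_{j-1})$ using the vanishing on $F_j\backslash F_{j-1}$'' --- does not work: by the exact sequence of the pair $(F_j,F_{j-1})$, a class on $F_j$ lifts to $\check H^\ast(F_j,F_{j-1})$ precisely when it vanishes on $F_{j-1}$, and you have no control of $u_j$ on $F_{j-1}$; what the motion plan gives you is vanishing on the complementary, locally closed piece $F_j\backslash F_{j-1}$. The roles must be reversed: it is the partial product $u_0\smile\cdots\smile u_{j-1}$ that vanishes on $F_{j-1}$ (by the inductive hypothesis) and hence lifts to $\check H^\ast(F_j,F_{j-1})$; one then cups this relative class with $u_j|_{F_j}$ and must show the result is zero in $\check H^\ast(F_j,F_{j-1})$, and it is exactly here that the vanishing of $u_j$ on $F_j\backslash F_{j-1}$, strong excision/tautness of \v Cech cohomology, and the locally compact ENR hypothesis enter. (Note also that the final contradiction lives in $\check H^\ast(X\times X)$ via $F_n=X\times X$, not in $\check H^\ast(X\times X,X\times X)$.) Since you do not carry out this step, and the version you wrote down would fail, (5) remains unproved in your text. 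The paper does not prove (5) internally either; it cites the detailed argument in \cite[Section 3.5]{Pavesic1}, and absent a corrected induction you should do the same.
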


We will prove both sets of statements together. 

\begin{proof}
Statement \ref{thmcatproperties}(1) follows directly from the definition and 
statement \ref{thmtcproperties}(1) is proved as Theorem 1 in \cite{Farber1}. 

The standard proofs (see for example \cite[Theorem 1.30]{CLOT} and \cite[Theorem 3]{Farber1}) 
of homotopy invariance statements \ref{thmcatproperties}(2) and \ref{thmtcproperties}(2) 
are easily adapted to closed filtrations so we leave the details to the readers.

Toward the proof of \ref{thmcatproperties}(3) observe that a discrete subspace $D$ of a path-connected 
space $X$ is categorical in $X$. In fact, the paths connecting points of $D$ to a point $x_0\in X$ 
define a continuous deformation of $D$ to $x_0$. Clearly, if $X\backslash A$ can be deformed to a discrete subspace 
of $X$, then $X\backslash A$ is categorical in $X$. Therefore, by adding $X \backslash A$ to $A$  we may increase $\cat(A)$ 
at most by one. Note that \ref{thmcatproperties}(3) generalizes the classical estimate of LS-category 
of a mapping cone, cf. \cite[Theorem 1.32]{CLOT}

Estimate \ref{thmtcproperties}(3) can be proved along the same lines  as in the usual proof for open coverings 
- see for example \cite[Theorem 5]{Farber1}.

For proofs of product inequalities \ref{thmcatproperties}(4) and \ref{thmtcproperties}(4), suppose that $\cat(X)=n$ and $\cat(Y)=m$. Then there exist closed filtrations 
$\emptyset\subset F_0\subset F_1\subset \cdots \subset F_n=X$ and 
$\emptyset\subset G_0\subset G_1\subset \cdots \subset G_m=Y$, such that the
differences $F_i\backslash F_{i-1}$ and $G_j \backslash G_{j-1}$ are categorical in $X$ and $Y$, respectively. For 
$k=0,1,\ldots,m+n$ define 
$$H_k:=\bigcup_{i+j=k} F_i\times G_j$$
yielding a closed filtration $H_0\subset H_1\subset \cdots \subset H_{m+n}$ of $X\times Y$.
The difference of successive terms in the filtration is given as
$$H_k\backslash H_{k-1}=\bigcup_{i+j=k} (F_i\backslash F_{i-1})\times (G_j\backslash G_{j-1}).$$
Clearly, each product $(F_i\backslash F_{i-1})\times (G_j\backslash G_{j-1})$ is categorical in $X\times Y$. Moreover, the
terms in the union are separated (the closure of each of them does not intersect the others), so the 
deformation of the individual terms to a point combine into a continuous deformation of 
$H_k\backslash H_{k-1}$ to a point. We may thus conclude that $\cat(X\times Y)\le m+n$ as claimed. The proof of 
the product inequality for topological complexity is analogous (cf. also \cite[(3.8)]{Pavesic2013}). 

The cohomological lower bound for topological complexity defined using closed filtrations (stated as \ref{thmtcproperties}(5)) was proved in detail in \cite[Section 3.5]{Pavesic1}. Note that the relevant proof in \cite{Pavesic1} was added after the publication of 
the homonymous article \cite{Pavesic} and that the results apply to our setting since the topological complexity $\tc(X)$ of a space $X$ agrees with the topological complexity $\tc(id_{X})$ of the identity map on $X$. The analogous statement for LS-category is proved analogously, e.g. as a direct consequence of \cite[Corollary 3.20]{Pavesic1}.
\end{proof}

\begin{example}\label{graphexample}
Let $G$ be a connected 1-dimensional CW-complex (i.e. a connected multigraph). 
Since $G$ can be obtained by adding a discrete set of open segments to its maximal tree, 
Theorem \ref{thmcatproperties}(3) implies that $\cat(G)\le 1$. 
By taking into account Theorem \ref{thmcatproperties}(1) and \ref{thmcatproperties}(2) as well, we obtain
$$\cat(G)=\begin{cases} 0, & \text{ if } G \text{ is a tree,}\\
1, & \text{ otherwise}.
\end{cases}$$

By combining the above result with estimates Theorem \ref{thmtcproperties}(3) and 
Theorem \ref{thmcatproperties}(4) we obtain $\tc(G)\le \cat(G\times G)\le 2\,\cat(G)\le 2$. 
By Theorem \ref{thmtcproperties}(1), $\tc(G)=0$ if and only if $G$ is a tree. 
Let us now consider the circle $S^1$ viewed as the subspace of the complex plane. As in \cite[Example 4.8]{Farber1}, we may filtrate 
$S^1\times S^1$ by $F_0:=\{(x,-x)\mid x\in S^1\}$, $F_1:=S^1\times S^1$, and defining the motion plans (i.e. sections of the projection 
$\ev_{0,1}\colon PS^1\to S^1\times S^1$) explicitly. As a consequence, if $G$ is a graph with exactly one cycle, then $G\simeq S^1$ and so
$\tc(G)=\tc(S^1)=1$. If $G$ has more than one cycle, then one can show that the 
cup-length estimate of \ref{thmtcproperties}(5) gives $\tc(G)\ge 2$, see \cite[Proposition 4.42]{Farber2008}
for details of the computation. Summarizing,  
$$\tc(G)=\begin{cases} 0 & \text{ if } G \text{ is a tree}\\
 1 &  \text{ if } G \text{ has exactly one cycle}, \\
 2 &  \text{ otherwise.}
\end{cases}
$$
\end{example}

We conclude this section with relative versions of LS-category and topological complexity, which are particularly useful when dealing with disconnected subspaces. The relative version of LS-category was first defined by E.R. Fadell (See \cite[Definition 7.1]{CLOT}) and the relative version of topological complexity was introduced by Farber in \cite[Definition 4.20]{Farber2008}. Consistent with our earlier definitions, we choose to define these values using finite filtrations by closed sets. 

\begin{definition}
Let $A$ be a subspace of a path-connected space $X$. The \textit{(normalized) relative  Lusternik-Schnirelmann 
category of $A$ in 
$X$}, denoted $\cat_{X}(A)$,  is the smallest integer $n$ for which there exists a filtration 
$$\emptyset=F_{-1}\subseteq F_0\subseteq F_1\subseteq \cdots \subseteq F_{n}=A$$ 
by closed subsets of $A$ such that $F_{j}\backslash F_{j-1}$ is categorical 
in $X$ for all $j\in\{0,1,2,\dots, n\}$.  If no such $n$ exists, then $\cat_{X}(A)=\infty$.

Similarly, given a subspace $A\subseteq X\times X$ we define the \textit{(normalized) relative  topological complexity of $A$ in $X$}, denoted $\tc_{X}(A)$, as the smallest integer $n$ for which there exists 
a filtration 
$$\emptyset=F_{-1}\subseteq F_0\subseteq F_1\subseteq \cdots \subseteq F_{n}=A$$ 
by closed subsets of $A$ such that the set $F_{j}\backslash F_{j-1}$ admits a motion plan in $X$ for all 
$j\in\{0,1,2,\dots, n\}$. If no such $n$ exists, then $\tc_{X}(A)=\infty$.
\end{definition}

Observe that $\cat_X(X)=\cat X$ and $\tc_X(X\times X)=\tc(X)$. We will later need the following 
estimates of LS-category in terms of relative category.

\begin{proposition}\label{splitprop}
For any closed subspace $A\subseteq X$, we have $\cat(X)\leq \cat_{X}(A)+\cat_{X}(X\backslash A)+1$.
\end{proposition}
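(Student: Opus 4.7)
The plan is to treat the two inequalities separately, since only the second requires real work. For the first, I start from a filtration $\emptyset = F_{-1} \subset F_0 \subset \cdots \subset F_n = A$ realizing $\cat(A)=n$: each $F_j$ is closed in $A$, and each $F_j \backslash F_{j-1}$ is null-homotopic in $A$. Composing any such null-homotopy with the inclusion $A \hookrightarrow X$ shows that $F_j \backslash F_{j-1}$ is categorical in $X$ as well, so the same filtration witnesses $\cat_X(A)$, giving $\cat_X(A) \leq \cat(A)$ directly.

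For the main inequality, assume $A$ is closed in $X$, set $m = \cat_X(A)$ and $n = \cat_X(X\backslash A)$, and pick filtrations $F_{-1} \subset \cdots \subset F_m = A$ (closed in $A$) and $G_{-1} \subset \cdots \subset G_n = X\backslash A$ (closed in $X\backslash A$), with all differences categorical in $X$. I will concatenate them into a single closed filtration of $X$. The naive choice of appending the sets $A \cup G_i$ fails because the $G_i$ need not be closed in $X$; the fix is to replace them by their closures in $X$. Concretely, define
\[
H_j := F_j \quad \text{for } 0 \leq j \leq m, \qquad H_{m+1+i} := A \cup \overline{G_i} \quad \text{for } 0 \leq i \leq n,
\]
where $\overline{G_i}$ denotes closure in $X$. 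Since $A$ is closed in $X$, so is each $F_j$, and hence every $H_j$ is closed; also $H_{m+n+1}=X$ since $\overline{G_n} \supseteq X\backslash A$.

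The critical verification — the single place where the construction could fail — is that each difference $H_j \backslash H_{j-1}$ remains categorical in $X$ after enlarging $G_i$ to $\overline{G_i}$. The bottom portion ($0\leq j \leq m$) is immediate from the original filtration of $A$. The key identity for the rest is that $G_i$ being closed in $X\backslash A$ forces $\overline{G_i} \cap (X\backslash A) = G_i$, which yields $H_{m+1} \backslash H_m = \overline{G_0} \backslash A = G_0$ and, for $i \geq 1$,
\[
H_{m+1+i} \backslash H_{m+i} = \overline{G_i} \backslash (A \cup \overline{G_{i-1}}) \subseteq G_i \backslash G_{i-1}.
\]
In each case the difference is contained in a set already known to be categorical in $X$, and restricting a null-homotopy yields a null-homotopy. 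Counting indices gives a closed filtration of $X$ of length $m+n+1$, which is the desired estimate $\cat(X) \leq \cat_X(A) + \cat_X(X\backslash A) + 1$.
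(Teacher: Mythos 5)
Your proof is correct and follows essentially the same route as the paper: concatenating the filtration of $A$ with the sets $A\cup G_i$, which (since $G_i$ is closed in $X\backslash A$) coincide with your $A\cup\overline{G_i}$. Your version just makes explicit the closedness verification that the paper leaves as an observation.
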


\begin{proof}
Assume that $A$ is closed with $\cat_X(A)=n$ and $\cat_X(X\backslash A)=m$, so that there exist closed filtrations 
$\emptyset=F_{-1}\subseteq F_0\subseteq F_1\subseteq \cdots \subseteq F_{n}=A,$ and $G_0\subseteq G_1\subseteq \cdots \subseteq G_{m}=X\backslash A,$
with all differences $F_{i}\backslash F_{i-1}$ and $G_{j}\backslash G_{j-1}$ categorical in $X$. 
Note that the sets $F_i$ are closed in $X$ as well, while the sets $G_j$ may not be closed in $X$ in general.
Set $F_{n+j+1}:=A\cup G_j$ for $0\leq j\leq m$ and observe that $F_{j}$ are closed in $X$ for all $0\leq j\leq m+n$ and that $F_{j}\backslash F_{j-1}$ are 
categorical in $X$ for all $0\leq j\leq m+n+1$. Thus $\cat(X)\leq m+n+1$.
\end{proof}

\begin{proposition}\label{finiteunionprop}
If $X$ is path-connected and $A\subseteq X$ is a finite disjoint union of closed path-connected subspaces $A_1,A_2,\dots,A_k$, then $\cat_X(A)=\sup\{\cat_{X}(A_i)\mid 1\leq i\leq k\}$.
\end{proposition}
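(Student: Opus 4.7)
The plan is to prove the two inequalities separately; set $n := \sup_i \cat_X(A_i)$.

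The inequality $n \le \cat_X(A)$ is straightforward. Given a witnessing filtration $\emptyset = F_{-1} \subset F_0 \subset \cdots \subset F_m = A$ for $\cat_X(A) = m$, I would restrict it to each $A_i$ by setting $F^i_j := F_j \cap A_i$. Each $A_i$ is closed in $X$ and hence in $A$, so each $F^i_j$ is closed in $A_i$. The difference $F^i_j \setminus F^i_{j-1}$ sits inside $F_j \setminus F_{j-1}$, which is categorical in $X$ by hypothesis, and any subset of a categorical subspace is itself categorical (just restrict the nullhomotopy). Thus $\cat_X(A_i) \le m$ for every $i$.

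For the reverse inequality $\cat_X(A) \le n$, I would fix, for each $i$, a filtration $\emptyset = F^i_{-1} \subseteq F^i_0 \subseteq \cdots \subseteq F^i_{n_i} = A_i$ achieving $\cat_X(A_i) = n_i \le n$, padded so that $F^i_j = A_i$ for $n_i \le j \le n$. Define $F_j := \bigcup_{i=1}^{k} F^i_j$ for $-1 \le j \le n$; since each $F^i_j$ is closed in the $X$-closed set $A_i$, it is closed in $X$, and a finite union of closed sets is closed. So $(F_j)_{j=-1}^{n}$ is a closed filtration of $A$ with $F_n = A$.

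The main obstacle is verifying that each difference
\[
F_j \setminus F_{j-1} \;=\; \bigsqcup_{i=1}^{k} \bigl(F^i_j \setminus F^i_{j-1}\bigr)
\]
is categorical in $X$. The key observation is that, because the $A_i$ are pairwise disjoint closed subsets of $X$, the pieces on the right are mutually separated: the closure in $X$ of each piece lies inside its own $A_i$ and is disjoint from the remaining $A_\ell$. Hence each piece is clopen in $F_j \setminus F_{j-1}$, so continuous maps out of the union may be specified piece by piece. Each piece $F^i_j \setminus F^i_{j-1}$ already carries a nullhomotopy in $X$ to some point $p_i$; using path-connectedness of $X$ to concatenate each of these with a path from $p_i$ to a common basepoint $x_0$, I obtain compatible nullhomotopies that assemble into a single continuous nullhomotopy of $F_j \setminus F_{j-1}$ in $X$. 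This yields $\cat_X(A) \le n$, completing the argument via the same separation-of-pieces technique the authors already used for the product inequality in Theorem~\ref{thmcatproperties}(4).
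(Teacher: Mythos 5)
Your proof is correct and follows essentially the same route as the paper's: the first inequality by restricting a filtration of $A$ to each $A_i$, and the second by padding the individual filtrations to a common length, taking unions levelwise, and observing that each difference is a finite disjoint union of categorical pieces whose nullhomotopies assemble (using that the $A_i$ are disjoint and closed, hence the pieces are separated, and that $X$ is path-connected to reach a common basepoint). The paper states this last assembly step without elaboration, so your extra detail is simply a fuller version of the same argument.
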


\begin{proof}
That $\cat_X(A_i)\leq \cat_{X}(A)$ for each $i$ is clear. If $\sup\{\cat(A_i)\mid 1\leq i\leq k\}=n<\infty$, then for each $i$, there is a closed filtration $F_{i,0}\subseteq F_{i,1}\subseteq \cdots F_{i,n}=A_i$ such that $F_{i,j}\backslash F_{i,j-1}$ is categorical in $X$ for all $0\leq j\leq n$. Then $G_j=\bigcup_{i=1}^{k}F_{i,j}$, $0\leq j\leq n$ defines a closed filtration of $A$. Each $G_{j}\backslash G_{j-1}$ is categorical in $X$ since it is a finite disjoint union of sets which are categorical in $X$. Hence, $\cat_X(A)\leq n$. 
\end{proof}

The next proposition is analogous to that of Theorem \ref{thmcatproperties}(2). We refer to \cite[Proposition 7.2]{CLOT} for the proof.

\begin{proposition}\label{homcatcorpairs}
If $(X,A)$ and $(Y,B)$ are homotopy equivalent pairs of spaces, then $\cat_X(A)=\cat_Y(B)$.
\end{proposition}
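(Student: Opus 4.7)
The plan is to mimic the closed-filtration version of the standard homotopy invariance proof for LS-category (Theorem \ref{thmcatproperties}(2)), transported to the relative setting by pulling filtrations back along the pair maps.

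First I would unpack the hypothesis: a homotopy equivalence of pairs provides maps $f\colon(X,A)\to(Y,B)$ and $g\colon(Y,B)\to(X,A)$ together with pair homotopies $gf\simeq 1_{(X,A)}$ and $fg\simeq 1_{(Y,B)}$. In particular, $f$ and $g$ restrict to ordinary (unrelated) continuous maps $f|_A\colon A\to B$ and $g|_B\colon B\to A$, while the homotopies $gf\simeq 1_X$ and $fg\simeq 1_Y$ hold as homotopies of maps of the ambient spaces. Then I would argue $\cat_X(A)\le \cat_Y(B)$; the reverse inequality follows by symmetry.

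Assume $\cat_Y(B)=n$ and pick a closed filtration $\emptyset=F_{-1}\subseteq F_0\subseteq\cdots\subseteq F_n=B$ of $B$ such that each $F_j\setminus F_{j-1}$ is categorical in $Y$. Pull back along $f|_A$ by setting
\[
F_j':=(f|_A)^{-1}(F_j)\subseteq A, \qquad j=-1,0,\ldots,n.
\]
Since each $F_j$ is closed in $B$ and $f|_A$ is continuous, each $F_j'$ is closed in $A$; furthermore $F_{-1}'=\emptyset$ and $F_n'=(f|_A)^{-1}(B)=A$, so this is a closed filtration of $A$ of length $n$. The differences satisfy $F_j'\setminus F_{j-1}'\subseteq (f|_A)^{-1}(F_j\setminus F_{j-1})$.

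The key verification is that each $F_j'\setminus F_{j-1}'$ is categorical in $X$. Choose a null-homotopy $H\colon (F_j\setminus F_{j-1})\times I\to Y$ of the inclusion $F_j\setminus F_{j-1}\hookrightarrow Y$. Composing with $f$ and $g$ yields a homotopy
\[
g\circ H\circ (f|_{F_j'\setminus F_{j-1}'}\times\operatorname{id}_I)\colon (F_j'\setminus F_{j-1}')\times I\to X
\]
from $gf|_{F_j'\setminus F_{j-1}'}$ to a constant map. Since $gf\simeq 1_X$, restricting that ambient homotopy to $F_j'\setminus F_{j-1}'$ shows $gf|_{F_j'\setminus F_{j-1}'}$ is homotopic in $X$ to the inclusion $F_j'\setminus F_{j-1}'\hookrightarrow X$. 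Concatenating the two homotopies exhibits the inclusion as null-homotopic in $X$, so $F_j'\setminus F_{j-1}'$ is categorical in $X$. This gives $\cat_X(A)\le n=\cat_Y(B)$, and swapping the roles of $(X,A)$ and $(Y,B)$ yields equality. No serious obstacle arises; the only subtle point is to record that continuity of $f$ automatically makes the pulled-back filtration closed in $A$, and that the pair structure is only used to guarantee $(f|_A)^{-1}(B)=A$ so that the filtration actually exhausts $A$.
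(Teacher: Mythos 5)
Your proof is correct and is essentially the argument the paper has in mind: the paper omits the proof as ``analogous to that of Theorem \ref{thmcatproperties}(2)'', and your pullback of the closed filtration along $f|_A$ together with the null-homotopy $g\circ H\circ(f\times \operatorname{id})$ and the ambient homotopy $gf\simeq 1_X$ is precisely that standard homotopy-invariance argument adapted to closed filtrations in the relative setting. The only point worth noting is that you in fact use less than a full homotopy equivalence of pairs (only that $f,g$ are maps of pairs and the homotopies are ambient), which is harmless and, if anything, slightly more general.
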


\section{One-dimensional spaces and wild sets}\label{sec:One-dimensional spaces and wild sets}

The ``dimension" of a space will refer to the Lebesgue covering dimension. A \textit{Peano continuum} is a connected, locally path-connected compact metrizable space. We refer to \cite[Chapter VIII]{Nadler} for the general theory of Peano continua. It follows from the Hahn-Mazurkiewicz Theorem \cite[8.14]{Nadler} that the continuous image of a Peano continuum in a Hausdorff space is a Peano continuum.

A \textit{simple closed curve} is a space that is homeomorphic to $S^1$. It is well-known that a path-connected Hausdorff space $X$ is uniquely arcwise connected if and only if $X$ does not contain a simple closed curve. A Peano continuum $D$ is a \textit{dendrite} if it contains no simple closed curve (equivalently, if it is uniquely arcwise connected). It is well-known that dendrites are one-dimensional and contractible \cite[Chapter X]{Nadler}. 

Since one-dimensional spaces will be our primary focus, we recall some important facts about them. 
First, we note that one-dimensional Hausdorff spaces are aspherical \cite{CurtisFortaspehrical} and have contractible generalized universal coverings in the sense of Fischer-Zastrow \cite[Example 4.14]{FZ07}. 
The following lemma plays a key role in one-dimensional homotopy theory. It was first proved in \cite{Fort} 
and a nice proof may also be found in \cite[Theorem 3.7]{CConedim}. Recall that a loop $S^1\to X$ is said to be \textit{inessential} if it is null-homotopic and \textit{essential} otherwise.

\begin{lemma}\label{dendritelemma}
A loop $\alpha\colon S^1\to X$ in a one-dimensional Hausdorff space $X$ is inessential if and only if there exists a dendrite $D$, a surjective loop 
$\beta\colon S^1\to D$ and a map $f\colon D\to X$ such that $\alpha=f\circ\beta$.
\end{lemma}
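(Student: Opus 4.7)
My plan is to reduce the statement directly to the contractibility of dendrites, which was recalled at the beginning of Section~3. First I would use the fact that, because $D$ is a dendrite, it is contractible; hence the loop $\beta\colon S^1 \to D$ admits a null-homotopy, i.e.\ a continuous map $H\colon S^1 \times [0,1] \to D$ with $H(\cdot,0) = \beta$ and $H(\cdot,1)$ constant at some $d_0 \in D$. Then I would simply post-compose with $f$: the map $f \circ H\colon S^1 \times [0,1] \to X$ is continuous, equals $f\circ\beta = \alpha$ at $t=0$, and is the constant loop at $f(d_0)$ at $t=1$, yielding the desired null-homotopy of $\alpha$ in $X$.

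There is no real obstacle in this direction; the argument uses neither the one-dimensionality of $X$ nor the surjectivity of $\beta$. I expect these hypotheses are inherited from the standard packaging of the lemma together with its converse direction (the hard half, proved in \cite{Fort} and \cite[Theorem 3.7]{CConedim}), which asserts that every null-homotopic loop in a one-dimensional Hausdorff space factors through a dendrite via a surjective loop. The converse is where the one-dimensionality of $X$ genuinely enters, via a careful construction of a dendrite quotient from a null-homotopy disk; for the ``if'' direction stated here, contractibility of $D$ alone suffices.
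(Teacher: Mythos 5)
Your argument for the implication as literally stated is correct: a dendrite is contractible, so $\beta$ is null-homotopic in $D$, and composing the null-homotopy with $f$ contracts $\alpha$ in $X$. You are also right that this direction uses neither the one-dimensionality of $X$ nor the surjectivity of $\beta$. Note, however, that the paper does not prove this lemma at all --- it is quoted from Fort and from Cannon--Conner \cite[Theorem 3.7]{CConedim} --- so there is no internal proof to compare against.

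The substantive issue is the one you flagged yourself. The sentence immediately following the lemma, ``It follows that every null-homotopic loop in a one-dimensional Hausdorff space contracts in its own image,'' can only be deduced from the \emph{converse} implication: every null-homotopic loop $\alpha$ in a one-dimensional Hausdorff space factors as $f\circ\beta$ with $\beta$ a surjective loop onto a dendrite $D$ (so that $f(D)=\im(\alpha)$ and the contraction of $\beta$ in $D$ pushes forward to a contraction of $\alpha$ inside its own image). That is the hard half, it is where one-dimensionality and the surjectivity of $\beta$ genuinely enter, and it is the content the cited references supply; the word ``if'' in the statement is best read as ``if and only if.'' So your proof is a correct verification of the trivial half of the lemma, but it does not establish the direction the paper actually uses downstream; for that one must appeal to (or reconstruct) the Fort/Cannon--Conner argument, which you correctly identify but do not carry out.
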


It follows that every inessential loop in a one-dimensional Hausdorff space contracts in its own image. 

\begin{corollary}\label{pioneinjectivecor}\cite[Corollary 3.3]{CConedim}
If $X$ is a one-dimensional Hausdorff space and $A\subseteq X$, then the inclusion of $A$ in $X$ is $\pi_1$-injective.
\end{corollary}

\begin{proof}
Let $\alpha:S^1\to A$ be a loop, which is inessential in $X$. By Lemma \ref{dendritelemma}, there is a dendrite $D$, a surjective loop $\beta\colon S^1\to D$, and a map $f\colon D\to X$ such that $\alpha=f\circ\beta$. Since dendrites are contractible, $\alpha$ contracts in $\im(\alpha)=\im(f)$. Thus $\alpha$ is inessential in $A$.
\end{proof}

\begin{corollary}\label{uaccor}
A path-connected one-dimensional Hausdorff space is uniquely arcwise connected if and only if it is simply connected.
\end{corollary}

\begin{proof}
Let $X$ be a path-connected one-dimensional Hausdorff space. Suppose $X$ is uniquely arcwise connected and let $\alpha:S^1\to X$ be a loop. Then $\im(\alpha)$ is a dendrite. Since dendrites are contractible, it follows that $\alpha$ is inessential. Thus $X$ is simply connected. For the converse, suppose $X$ is not uniquely arcwise connected. It is straightforward to show that there exists a simple closed curve $A\subseteq X$. By Corollary \ref{pioneinjectivecor}, the inclusion map $A\to X$ is $\pi_1$-injective. Thus $X$ is not simply connected.
\end{proof}

In the case of Peano continua, we have the following strengthening of Corollary \ref{pioneinjectivecor}.

\begin{lemma}\cite[Lemma 3.1]{CConedim}\label{retractlemma}
Let $A\subseteq X$ be one-dimensional Peano continua. Then $A$ is a retract of $X$.
\end{lemma}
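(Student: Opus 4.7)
The plan is to build the retraction $r\colon X\to A$ component by component on the complement $X\setminus A$. Since $X$ is a Peano continuum, it is second countable and locally path-connected, so $X\setminus A$ is open and splits into an at most countable family $\{U_n\}_{n\in\mathbb{N}}$ of open path components. Compactness of $X$ combined with local path-connectedness at points of $A$ forces the null-sequence condition $\diam(U_n)\to 0$. For each $n$, set $W_n=\overline{U_n}$ and $C_n=W_n\cap A$ (the frontier of $U_n$); each $W_n$ is a one-dimensional subcontinuum of $X$ meeting $A$ precisely in $C_n$.

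The heart of the argument is to construct, for each $n$, a continuous map $r_n\colon W_n\to A$ with $r_n|_{C_n}=\mathrm{id}_{C_n}$ and with image contained in a small neighborhood of $C_n$ in $A$. Since $A$ is a Peano continuum it is arcwise and locally arcwise connected, so I can select a subcontinuum $D_n\subseteq A$ (a dendrite spanning $C_n$, or simply $\{p\}$ when $C_n=\{p\}$) whose diameter is controlled by $\diam(W_n)$. I then extend the inclusion $C_n\hookrightarrow D_n$ to $r_n\colon W_n\to D_n\subseteq A$. One-dimensionality enters decisively via the preceding lemma: every null-homotopic loop in $W_n$ contracts in its own image, so there is no homotopical obstruction to pushing $W_n$ into the dendrite $D_n$; combined with the classical fact that dendrites are absolute retracts, this yields the desired extension.

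Finally, I glue by declaring $r|_A=\mathrm{id}_A$ and $r|_{W_n}=r_n$. The definitions agree on the overlaps $W_n\cap A=C_n$ since each $r_n$ fixes $C_n$, so $r$ is a well-defined set-theoretic retraction. Continuity on $A$ and on each open $U_n$ is immediate. At a point $a\in A$ which is a limit of points from the $U_n$'s, any sequence $x_k\to a$ either visits only finitely many $W_n$ (reducing to continuity of the fixed $r_n$'s and of $r|_A$) or lies in $W_n$'s with $\diam(W_n)\to 0$, in which case the image control on $r_n$ forces $r(x_k)\to a$.

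The main obstacle is the middle step: choosing a suitable dendrite $D_n\subseteq A$ close to $C_n$ and extending $\mathrm{id}_{C_n}$ continuously to $W_n\to D_n$. Without one-dimensionality this would fail in general, since spherical obstructions could block extension; the essential point is that in a one-dimensional Peano continuum every loop factors, up to homotopy in its own image, through a dendrite, so no topological obstruction survives.
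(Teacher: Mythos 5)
This lemma is not proved in the paper; it is quoted verbatim from Cannon--Conner \cite[Lemma 3.1]{CConedim}, so your argument has to stand on its own. It does not: the decisive step --- ``select a subcontinuum $D_n\subseteq A$ (a dendrite spanning $C_n$) \dots\ and extend $C_n\hookrightarrow D_n$ to $r_n\colon W_n\to D_n$'' --- is exactly where the content of the lemma lives, and as written it fails. The set $C_n=\overline{U_n}\cap A$ need not be contained in \emph{any} dendrite of $A$. For a concrete test case, let $X$ be the Sierpinski carpet and $A$ the boundary circle of one of the deleted squares: then $X\backslash A$ has a single component $U$ with $\overline{U}=X$ and $\overline{U}\cap A=A$, so your ``component-by-component'' reduction hands you back the original problem ($W_n=X$, $C_n=A$) with no dendrite of $A$ containing $C_n$ and nothing gained. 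Even when $C_n$ is totally disconnected it may be a Cantor set spread around an essential circle of $A$ (attach a Gehman dendrite to $S^1$ along its endpoint set), and again no dendrite of $A$ contains it. The homotopical remark you lean on --- that null-homotopic loops in one-dimensional spaces contract in their own image --- does not produce the extension of $\mathrm{id}_{C_n}\colon C_n\to A$ over $W_n$; note that this extension problem is precisely what fails for $S^1=\partial D^2\subseteq D^2$, so whatever argument you give here must use one-dimensionality in an essential and explicit way, which the appeal to ``dendrites are absolute retracts'' does not do when the target dendrite does not exist.

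A secondary but real gap: the null-sequence claim $\diam(U_n)\to 0$ does not follow from ``compactness of $X$ combined with local path-connectedness at points of $A$.'' In $[0,1]^2$ (locally path-connected everywhere) the open set $\bigcup_n\bigl(\tfrac{1}{n+1},\tfrac{1}{n}\bigr)\times(0,1)$ has infinitely many components of diameter $>1$; what saves you in the present setting can only be the local connectedness of $A$ itself, and that implication requires an actual argument. Relatedly, your continuity check at the end needs not just $\diam(W_n)\to 0$ but that $\im(r_n)$ stays close to $C_n$, which depends on the unestablished choice of $D_n$. If you want a correct self-contained proof, you should consult \cite[Lemma 3.1]{CConedim} rather than patch this outline: the obstacle is genuinely the extension step, not the bookkeeping around it.
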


\begin{definition}\label{defwildness}
Let $X$ be any space. A point $x\in X$ is a \textit{wild point of} $X$ if for every neighborhood $U$ of $x$, there exists a based 
loop $\alpha\colon S^1\to U$, which is essential in $X$. We let $\fw(X)$ denote the subspace of $X$ consisting of all wild points of $X$.
\end{definition}

\begin{figure}[ht]
    \centering
    \includegraphics[scale=0.3]{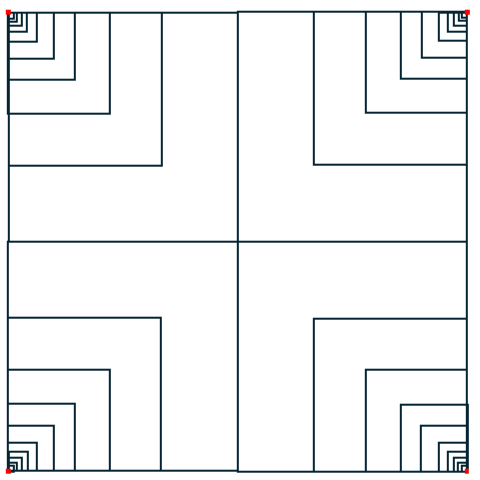}  \hspace{15mm}   \includegraphics[scale=0.21]{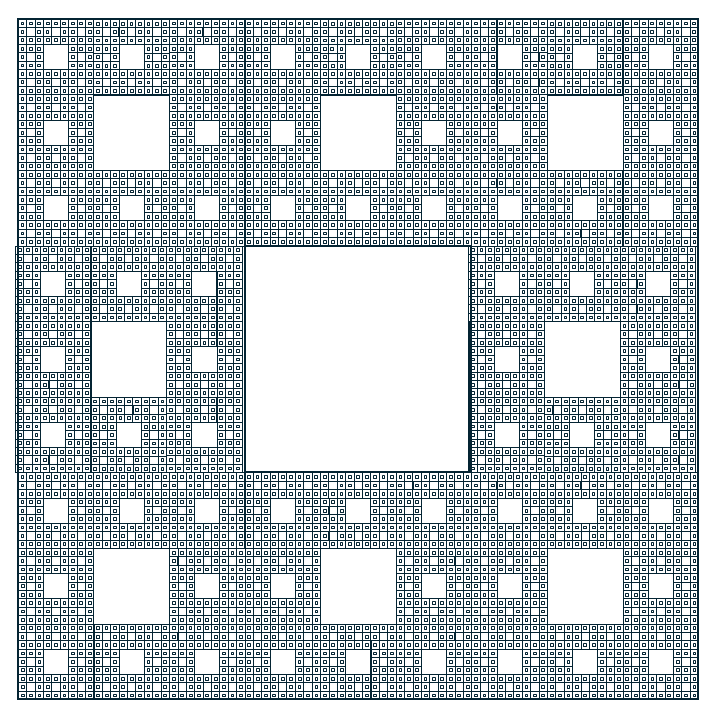}
    \caption{A space whose four corners are wild points (left) and the Sierpinski Carpet (right), 
    in which all points are wild .}
    \label{fig:wild set}
\end{figure} 

\begin{remark}
It is clear from the definition of $\fw(X)$ that $X\backslash\fw(X)$ is open in $X$ and so $\fw(X)$ is closed. 
In particular, if $X$ is a compact metric space, then so is $\fw(X)$. 
If $X$ is locally path-connected, then $\fw(X)$ is precisely the subspace of $X$ consisting of points at which $X$ 
fails to be semi-locally simply connected (the latter being the usual definition of the ``wild set" or ``bad set" 
in the literature on one-dimensional Peano continua).
\end{remark}

The following basic observations about wild sets are analogues of the results about (sequential) based-wild sets in \cite{BrazasMitra}. 
Since the arguments are nearly identical to those in \cite{BrazasMitra} we just sketch the proofs. Recall that a map $f\colon X\to Y$ 
is \emph{$\pi_1$-injective} if for every $x\in X$, the induced homomorphism $f_{\#}\colon\pi_1(X,x)\to \pi_1(Y,f(x))$ is injective.

\begin{lemma}\label{injectivelemma}
If $f\colon X\to Y$ is a $\pi_1$-injective map, then $f(\fw(X))\subseteq \fw(Y)$.
\end{lemma}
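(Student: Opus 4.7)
The plan is a direct unpacking of the definition of wildness together with the hypothesis. Start with a point $y\in f(\fw(X))$, written as $y=f(x)$ for some $x\in\fw(X)$, and fix an arbitrary neighborhood $V$ of $y$ in $Y$. First, I would use continuity of $f$ to pull $V$ back: $U:=f^{-1}(V)$ is an open neighborhood of $x$ in $X$. By the hypothesis $x\in\fw(X)$, there exists a loop $\alpha\colon S^1\to U$, based at some point $p\in U$, which is essential in $X$, i.e.\ $[\alpha]\neq 1$ in $\pi_1(X,p)$.

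Next, I would form the composition $f\circ\alpha\colon S^1\to V$, which is a loop in $V$ based at $f(p)\in V$. Since $f$ is $\pi_1$-injective, the induced homomorphism $f_{\#}\colon\pi_1(X,p)\to\pi_1(Y,f(p))$ is injective, so $[f\circ\alpha]=f_{\#}([\alpha])\neq 1$ in $\pi_1(Y,f(p))$. Thus $f\circ\alpha$ is an essential loop in $Y$ contained in $V$, witnessing wildness of $y$ in $Y$. Since $V$ was arbitrary, $y\in\fw(Y)$.

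There is essentially no obstacle here: the only subtlety is that the loop $\alpha$ produced by Definition \ref{defwildness} need not be based at $x$ itself, but this causes no difficulty because the $\pi_1$-injectivity hypothesis is stated uniformly at every basepoint. The argument is the same as that of the analogous statement for based-wild sets in \cite{BrazasMitra} which the authors cite, and indeed they advertise that the proof is only a sketch.
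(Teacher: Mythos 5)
Your proposal is correct and is essentially identical to the paper's proof: pull the neighborhood of $f(x)$ back along $f$, extract an essential loop there from wildness of $x$, and push it forward using $\pi_1$-injectivity (which, as you note, applies at any basepoint). No further comment is needed.
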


\begin{proof}
Let $x\in\fw(X)$ and $U$ be a neighborhood of $f(x)$. Since $x\in\fw(X)\cap f^{-1}(U)$, there exists a loop $\alpha:S^1\to f^{-1}(U)$, which is essential in $X$. Since $f$ is $\pi_1$-injective, $f\circ \alpha:S^1\to U$ is essential in $Y$. Thus $f(x)\in \fw(Y)$.
\end{proof}

The next lemma is a direct combination of Corollary \ref{pioneinjectivecor} and \ref{injectivelemma}.

\begin{lemma}\label{onedimwildinclusion}
If $X$ is a one-dimensional Hausdorff space and $A\subseteq X$, then $\fw(A)\subseteq \fw(X)$.
\end{lemma}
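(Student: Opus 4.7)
The plan is to deduce $\pi_1$-injectivity of the inclusion $i\colon A\hookrightarrow X$ directly from the corollary stated just after the Fort/Curtis-Fort lemma, namely that every null-homotopic loop in a one-dimensional Hausdorff space contracts inside its own image. Concretely, fix a basepoint $a\in A$ and a loop $\alpha\colon S^1\to A$ with $\alpha(s_0)=a$ such that $i_{\#}[\alpha]=0$ in $\pi_1(X,a)$. Viewing $\alpha$ as a loop in the one-dimensional Hausdorff space $X$, the cited corollary gives a null-homotopy $H\colon S^1\times[0,1]\to X$ of $\alpha$ whose image is contained in $\alpha(S^1)$; in particular, $H$ takes values in $A$.

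A priori $H$ is an unbased null-homotopy, so the second step is the standard promotion to a based null-homotopy: the track $\beta(t):=H(s_0,t)$ is a path in the path-connected set $\alpha(S^1)\subseteq A$, and conjugation by $\beta$ turns $H$ into a based null-homotopy of $\alpha$ in $\alpha(S^1)$. Thus $[\alpha]$ is already trivial in $\pi_1(\alpha(S^1),a)$, hence in $\pi_1(A,a)$. Since $a$ and $\alpha$ were arbitrary, this proves that every induced homomorphism $i_{\#}\colon\pi_1(A,a)\to\pi_1(X,a)$ is injective, i.e.\ $i$ is $\pi_1$-injective.

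The ``in particular'' clause is then immediate: applying Lemma \ref{injectivelemma} to the $\pi_1$-injective inclusion $i\colon A\to X$ yields $\fw(A)=i(\fw(A))\subseteq\fw(X)$.

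I do not anticipate a serious obstacle; the only subtle point is keeping straight that the cited corollary produces a free, rather than a based, null-homotopy, so one has to invoke the elementary fact that a freely null-homotopic loop in a path-connected space is conjugate to the trivial element, and therefore already null-homotopic based. Everything else reduces to the hypothesis that $X$ is one-dimensional and Hausdorff, which transfers verbatim to the loop $\alpha$ regarded as a map into $X$.
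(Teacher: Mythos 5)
Your proof is correct, but it takes a different route from the paper: the paper disposes of the first statement by directly citing \cite[Corollary 3.3]{CConedim} and then applies Lemma \ref{injectivelemma}, whereas you give a self-contained derivation from the assertion, stated in the paper just after the Fort/Cannon--Conner lemma, that every null-homotopic loop in a one-dimensional Hausdorff space contracts in its own image. Your argument is sound: if $\alpha\colon S^1\to A$ dies in $\pi_1(X,a)$, it admits a null-homotopy with image in $\alpha(S^1)\subseteq A$, and the standard track-conjugation argument upgrades this free null-homotopy to a based one, so $[\alpha]=1$ already in $\pi_1(\alpha(S^1),a)$ and hence in $\pi_1(A,a)$; the ``in particular'' clause then follows from Lemma \ref{injectivelemma} exactly as in the paper. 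What your approach buys is transparency --- the reader sees why one-dimensionality forces $\pi_1$-injectivity of subspace inclusions rather than taking it on faith from a reference (and in fact your argument is essentially the proof of the cited corollary). The one caveat worth noting is that the ``contracts in its own image'' statement you lean on requires the \emph{converse} direction of the dendrite-factorization theorem (null-homotopic $\Rightarrow$ factors through a surjection onto a dendrite), while the displayed lemma in the paper records only the ``if'' direction; since the paper itself asserts the corollary, you are entitled to use it, but a fully self-contained write-up would cite the full if-and-only-if version from \cite{CConedim}.
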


When wild points are absent, a ``deforestation" process developed in \cite{ConnerMeilstrup} implies the following.

\begin{theorem}\label{graphtheorem}
If $X$ is a non-simply connected one-dimensional Peano continuum with $\fw(X)=\emptyset$, then there exists a unique finite graph $G\subseteq X$ with no free edges, which is a deformation retract of $X$.
\end{theorem}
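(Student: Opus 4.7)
The plan is to produce the graph $G$ in three stages: first bound $\pi_1(X)$ using the absence of wild points to show it is finitely generated (and free), next extract a finite graph $G_0 \subseteq X$ onto which $X$ deformation retracts, and finally ``deforest'' $G_0$ by collapsing free edges to obtain $G$.

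First I would use compactness together with $\fw(X)=\emptyset$ as follows. Every $x\in X$ admits an open neighborhood $U_x$ containing no essential loop of $X$, so by Lemma \ref{onedimwildinclusion} the inclusion $U_x\hookrightarrow X$ is null on $\pi_1$. By compactness, $X$ is covered by finitely many such $U_{x_1},\ldots,U_{x_n}$, and a standard nerve/van Kampen argument shows $\pi_1(X)$ is finitely generated. Since one-dimensional Hausdorff spaces are aspherical and have locally free fundamental group, it follows that $\pi_1(X)$ is a finitely generated free group of some rank $r\geq 1$.

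Next I would build a finite graph $G_0\subseteq X$ carrying $\pi_1(X)$. Choose simple closed curves $C_1,\ldots,C_r$ in $X$ whose classes generate $\pi_1(X)$ and connect their images by finitely many arcs in $X$ to obtain a finite connected graph $G_0\subseteq X$. By Lemma \ref{retractlemma}, $G_0$ is a retract of $X$ via some $r\colon X\to G_0$. By Lemma \ref{onedimwildinclusion} the inclusion $G_0\hookrightarrow X$ is $\pi_1$-injective, and by construction it is $\pi_1$-surjective, so $r$ is a $\pi_1$-isomorphism. Since both $X$ and $G_0$ are aspherical one-dimensional spaces with contractible generalized universal coverings, lifting the retraction to universal covers produces a proper $\pi_1(X)$-equivariant retraction of dendrites, which is automatically a deformation retraction; pushing down yields a deformation retraction $X \searrow G_0$.

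Finally I would perform the deforestation. While $G_0$ contains a vertex $v$ of degree one, collapse the unique edge incident to $v$ to its other endpoint; this is a deformation retraction inside $G_0$, hence extends (by composition with the previous deformation retraction) to a deformation retraction of $X$ onto the smaller graph. After finitely many iterations one arrives at a finite subgraph $G\subseteq X$ with no vertices of degree one, i.e.\ no free edges, which is a deformation retract of $X$. For uniqueness, suppose $G'$ is another such graph. Then each of $G$ and $G'$ is a finite-graph deformation retract of the other, and a finite graph with no free edges admits no proper graph deformation retract; combined with the $\pi_1$-injectivity of inclusions from Lemma \ref{onedimwildinclusion} (which forces $G$ and $G'$ to contain the same simple closed curves in $X$) and Lemma \ref{retractlemma} (which lets one retract $X$ onto $G\cap G'$), one concludes $G=G'$ as subspaces of $X$.

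The main obstacle I anticipate is upgrading the retraction $r\colon X\to G_0$ to a genuine deformation retraction while keeping $G_0$ pointwise fixed; this is where the one-dimensionality and asphericity hypotheses do the real work, via the contractible generalized universal covering of Fischer--Zastrow. Uniqueness of $G$ as a subspace (not just up to homotopy) is the second delicate point, and this is where the ``no free edges'' hypothesis is essential: it prevents any further collapse and thus pins down $G$ as the minimal closed graph in $X$ supporting $\pi_1(X)$.
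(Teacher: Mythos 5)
The paper does not actually prove this statement; it quotes it as a consequence of the deforestation machinery of Conner--Meilstrup \cite{ConnerMeilstrup}, so the question is whether your outline could stand in for that citation. As written it cannot, because the steps carrying the real content are gapped. Your first step (finitely generated, hence free, fundamental group) is fine. But the construction of the finite graph $G_0$ is not: you cannot simply ``choose simple closed curves whose classes generate $\pi_1(X)$''. A free basis need not be representable by simple closed curves; after joining curves to a basepoint you only obtain conjugates of the curve classes, and conjugates of a generating set of a free group need not generate, so surjectivity of $\pi_1(G_0)\to\pi_1(X)$ is not ensured. Worse, a union of finitely many arcs and simple closed curves inside a Peano continuum need not be a finite graph at all: two arcs in $X$ can meet in a Cantor set, creating infinitely many independent cycles. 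Producing a genuine finite subgraph of $X$ carrying $\pi_1(X)$ is exactly the nontrivial point that the core/deforestation construction supplies, and your sketch assumes it.

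The remaining two steps are also not yet proofs. The upgrade from the retraction $r\colon X\to G_0$ of Lemma \ref{retractlemma} to a deformation retraction is asserted rather than argued: $X$ is not a CW-complex, so no Whitehead-type theorem converts ``$\pi_1$-isomorphism between aspherical spaces'' into $i\circ r\simeq \mathrm{id}_X$, and the claim that an equivariant retraction of the (noncompact, tree-like) universal covering is ``automatically a deformation retraction'' is precisely the hard point --- one needs continuity of the first-point projection onto the subtree and of an arc-sliding homotopy, which is the substance of the cited deforestation process. Finally, the uniqueness argument fails as stated: $G\cap G'$ may be disconnected or not locally connected (two embedded graphs can meet in a Cantor set), so Lemma \ref{retractlemma} does not apply to it, and nothing you say shows that $G$ and $G'$ are deformation retracts of one another. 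The standard route to uniqueness uses the rigidity of reduced loops in one-dimensional spaces (uniqueness of reduced representatives), which forces every simple closed curve of $X$, and every bridge arc joining essential subgraphs, into any deformation retract with no free edges; some input of this kind is unavoidable and is absent from your proposal.
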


For example, if $X$ is a one-dimensional Peano continuum with a cyclic fundamental group, then $X$ deformation retracts onto a unique simple closed curve in $X$.

When wild points are present, they play a particularly important role in the theory of one-dimensional spaces. For instance, if $X$ is a one-dimensional Peano continuum, then $\fw(X)$ is the set of fixed-points by the action of self-homotopy equivalences on $X$ \cite[Theorem 3.3]{ConnerMeilstrup}. Strict rigidity of the wild set under continuous deformation is vital to Eda's homotopy classification of one-dimensional Peano continua \cite{Edaonedim}, which states that one-dimensional Peano continua are homotopy equivalent if and only if they have abstractly isomorphic fundamental groups.

M. Meilstrup \cite{Meilstrup} proved that every one-dimensional Peano continuum $X$ is homotopy equivalent to a one-dimensional Peano 
continuum $Y$, where $\fw(X)=\fw(Y)$ and $Y\backslash \fw(Y)$ is a disjoint union of a sequence of open arcs $O_1,O_2,O_3,\dots$, 
where $\ov{O_n}$ is a closed arc or simple closed curve, and $\ov{O_n}\backslash O_n\subseteq \fw(Y)$. A modified proof appears 
in \cite{Edaonedim}. The homotopy equivalence $q:X\to Y$ is a quotient map, which collapses to a point each member of countable collection of dendrites. Since we are interested in filtrations of spaces to compute topological complexity, we require the actual decomposition of $X$ determined by $q$. In particular, the following can be proved by combining the results in \cite{Meilstrup}. We give a brief outline of the proof and refer to Meilstrup's Thesis for the details.

\begin{theorem}\label{structuretheorem}
Let $X$ be a one-dimensional Peano continuum with $\fw(X)\neq \emptyset$. Then $\fw(X)$ is contained in a closed subset $Y\subseteq X$ such that
\begin{enumerate}
\item $X\backslash Y$ is a disjoint union of open contractible sets;
\item $\fw(X)$ is a strong deformation retract of $Y$;
\item The closure of every connected component of $Y\backslash \fw(X)$ is a dendrite that meets $\fw(X)$ at a single point.
\end{enumerate}
Moreover, the statement holds if $X$ is a finite disjoint union of one-dimensional Peano continua.
\end{theorem}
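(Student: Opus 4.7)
The plan is to follow Meilstrup's construction from \cite{Meilstrup}, but to realize $Y$ directly as a closed subset of $X$ rather than as the target of the homotopy-equivalent quotient $q$. First, decompose $V := X \setminus \fw(X)$ into its path components $\{C_\lambda\}$. Since $X$ is locally path-connected and $V$ is open, each $C_\lambda$ is open and path-connected, and Lemma \ref{onedimwildinclusion} together with $C_\lambda \cap \fw(X) = \emptyset$ forces $\fw(C_\lambda) = \emptyset$. The boundary $\partial C_\lambda = \overline{C_\lambda}\setminus C_\lambda$ is necessarily contained in $\fw(X)$.

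The key structural step is to classify each component. Call $C_\lambda$ an \emph{essential arc} if $C_\lambda$ is homeomorphic to $(0,1)$ and its closure $\overline{C_\lambda}$ is either a closed arc with endpoints in $\fw(X)$ or a simple closed curve meeting $\fw(X)$ at exactly one point. Let $\mathcal{E}$ denote the collection of essential arcs and set $Y := X \setminus \bigcup \mathcal{E}$. Since the sets in $\mathcal{E}$ are mutually disjoint open subsets of $X$ each homeomorphic to $(0,1)$, property (1) is immediate. To establish (3), one shows that every component $C_\lambda \notin \mathcal{E}$ has $\overline{C_\lambda}$ a dendrite meeting $\fw(X)$ at a single point: if $\overline{C_\lambda}$ contained a simple closed curve, one applies Theorem \ref{graphtheorem} to $\overline{C_\lambda}$ (noting that any such cycle avoids $\fw(X)$) to extract an essential arc inside $C_\lambda$, contradiction; and if $\overline{C_\lambda}$ met $\fw(X)$ in two distinct points, the unique arc joining them inside the dendrite $\overline{C_\lambda}$ would have interior an essential arc, again a contradiction.

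For (2), each dendrite $D_\lambda := \overline{C_\lambda}$ (with $C_\lambda \notin \mathcal{E}$) admits a canonical strong deformation retraction to its unique attaching point $p_\lambda \in \fw(X)$. Gluing these retractions along with the identity on $\fw(X)$ yields a map $H : Y \times I \to Y$. Continuity away from $\fw(X)$ is immediate; at points $x \in \fw(X)$ it reduces to the assertion that $\diam(D_\lambda) \to 0$ along any net $p_\lambda \to x$. This follows from the compact metrizability together with the local path-connectedness of $X$: a subnet of dendrites with diameters bounded below would produce a limit continuum obstructing local path-connectedness at $x$. The final statement about a finite disjoint union of one-dimensional Peano continua is then obtained by applying the result to each summand separately.

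The main obstacle is the rigor of the classification above, namely the claim that every component of $V$ that is not an essential arc has dendrite closure meeting $\fw(X)$ at a single point. A component $C_\lambda$ could a priori be a complex graph-like region with multiple cycles and branches, and ruling this out requires a careful local graph-theoretic analysis using Theorem \ref{graphtheorem} together with the $\pi_1$-injectivity of subspace inclusions from Lemma \ref{onedimwildinclusion}. This combinatorial step is precisely the core of Meilstrup's thesis construction, to which a complete proof would defer for the technical details.
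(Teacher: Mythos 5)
There is a genuine gap, and it sits exactly where you placed your hopes: the claimed dichotomy that every component of $V=X\setminus\fw(X)$ is either an ``essential arc'' or has dendrite closure meeting $\fw(X)$ at a single point is false, and it is not what Meilstrup proves, so deferring to his thesis does not close the gap. Concretely, let $X$ be the Hawaiian earring with wild point $u$, and glue to $u$ a theta-graph by identifying $u$ with one of its two branch vertices. Then $\fw(X)=\{u\}$, and the component of $X\setminus\{u\}$ containing the other branch vertex $v$ is the theta-graph minus $u$: three half-open arcs joined at $v$. It is not homeomorphic to $(0,1)$, so it is not in $\mathcal{E}$, yet its closure is the theta-graph, which is not a dendrite; your $Y=X\setminus\bigcup\mathcal{E}$ then contains essential loops outside $\fw(X)$, so properties (2) and (3) both fail. (One can make this worse: attach to an arc $[0,1]$ with $0\in\fw(X)$ a null sequence of circles at the points $1/n$; the resulting component of $V$ is connected and carries infinitely many cycles.) Your two ``contradiction'' arguments conflate \emph{containing} an essential arc with \emph{being} one: components of $V$ are maximal connected subsets, so finding an open arc inside $C_\lambda$ whose closure is a cycle or joins two points of $\fw(X)$ contradicts nothing, since that arc is a proper subset of $C_\lambda$ and hence was never placed in $\mathcal{E}$ or removed from $Y$.

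The paper's construction is genuinely finer than the component decomposition of $X\setminus\fw(X)$. After deforesting to a core $C$, one chooses (via Eda's brick partitions) a null sequence of dendrites $D_j\subseteq C$, each meeting $\fw(X)$ at a single point and pairwise disjoint away from those points, with the property that $C\setminus\bigl(\fw(X)\cup\bigcup_j D_j\bigr)$ is a disjoint union of open arcs whose \emph{closures are disjoint from} $\fw(X)$ (their endpoints lie in the $D_j$'s). In the theta example this amounts to cutting the theta-minus-$u$ component into a dendrite hanging off $u$ plus two residual open arcs, something no selection of whole components can achieve. Your secondary argument for continuity of the glued retraction (that the attached dendrites accumulating at a point of $\fw(X)$ have diameters tending to $0$) is also asserted rather than proved; this null-sequence property does require an argument from uniform local path-connectedness, and in the paper it is built into the choice of the $D_j$ and $E_n$ rather than deduced after the fact. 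If you want to write a self-contained proof, the step you must supply is precisely this subdivision of the components of $X\setminus\fw(X)$, not a classification of them.
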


\begin{proof}[Sketch of Proof]
Following \cite{Meilstrup}, one identifies a ``core" Peano continuum $C\subseteq X$, which is a strong deformation retract of $X$, contains $\fw(X)$, and has 
the property that if $U_1,U_2,U_3,\dots$ are the connected components of $X\backslash C$, then $E_n=\ov{U}_n$ is a dendrite that meets 
$C$ at a single point $e_n$ (this ``deforestation" process also appears in \cite{ConnerMeilstrup}). One then proceeds to identify a 
(possibly finite) sequence of dendrites $D_1,D_2,D_3,\dots \subseteq C$ and points $d_1,d_2,d_3,\dots \in C$ such that 
\begin{enumerate}
\item for each $j\in\bbn$, $D_j$ meets $\fw(X)$ at a single point $d_j$ for each $j$,
\item $(D_j\backslash \{d_j\})\cap (D_i\backslash \{d_i\})=\emptyset$ if $i\neq j$,
\item if $W=\fw(X)\cup \bigcup_{j}D_j$, then $C\backslash W$ is a (possibly finite) disjoint union of open arcs $A_1,A_2,A_3,\dots$, whose closures are closed arcs disjoint from $\fw(X)$.
\end{enumerate}
(In \cite{Edaonedim}, Eda uses brick partitions to construct such a sequence of dendrites $D_j$). Set $Y=W\cup \bigcup\{E_n\mid e_n\in W\}$. Since adjoining a null sequence of dendrites along a sequence of points in a dendrite gives a dendrite, (2) and (3) follow. Note that a component $V$ of $X\backslash Y$ consists of either a set $U_n$ or an open arc $A_m$ potentially with some of the dendrites $E_n$ attached (if $e_n\in A_m$). In particular, $V$ is a connected subset of a dendrite and is therefore contractible.

The statement regarding finite disjoint unions of Peano continua follows by applying the main statement to each connected component.
\end{proof}

\begin{figure}[ht]
    \centering
    \includegraphics[height=1.6in]{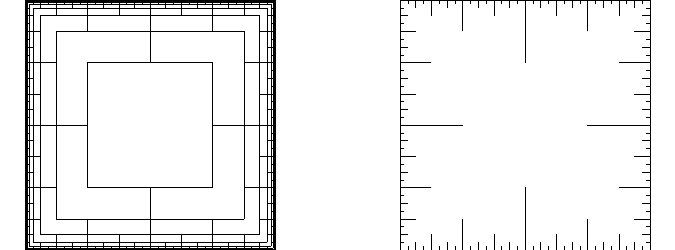}  
    \caption{A Peano Continuum $X$ whose wild set $\fw(X)$ is the outer boundary square (left) and a subspace $Y$ as in Theorem \ref{structuretheorem} that deformation retracts onto $\fw(X)$ and for which $X\backslash Y$ is a disjoint union of open arcs (right). In the more general scenario, $X$ may have a null-sequence of dendrites $E_1,E_2,\dots$ attached to it. Whenever such a dendrite $E_n$ meets $X$ at a point of the subspace $Y$ shown here, we would include it to become part of $Y$. }
    \label{fig:yexample}
\end{figure} 

\begin{corollary}\label{wilditerationcor}
If $X$ is a one-dimensional Peano continuum, then $\cat(X)\leq \cat_{X}(\fw(X))+1$.
\end{corollary}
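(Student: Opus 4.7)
The second inequality $\cat_X(\fw(X)) \leq \cat(\fw(X))$ is exactly the first clause of Proposition \ref{splitprop} applied to $A = \fw(X)$. For the first inequality, if $\fw(X) = \emptyset$ then Theorem \ref{graphtheorem} and Example \ref{graphexample} together give $\cat(X) \leq 1$, which matches the bound. So assume $\fw(X) \neq \emptyset$, set $n = \cat_X(\fw(X))$, and invoke Theorem \ref{structuretheorem} to obtain a closed subspace $Y \subseteq X$ containing $\fw(X)$, a strong deformation retraction $r : Y \to \fw(X)$, and a decomposition of $X \backslash Y$ into open contractible components.

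The plan is to build a single closed filtration $F'_0 \subseteq F'_1 \subseteq \cdots \subseteq F'_{n+1} = X$ witnessing $\cat(X) \leq n+1$ directly. Fix a witnessing filtration $F_0 \subseteq \cdots \subseteq F_n = \fw(X)$ by closed subsets of $\fw(X)$ with each $F_j \backslash F_{j-1}$ categorical in $X$; set $F'_j := r^{-1}(F_j)$ for $0 \leq j \leq n$ and $F'_{n+1} := X$. Each $F'_j$ is closed in $Y$ by continuity of $r$, and hence in $X$ because $Y$ is closed in $X$. For $0 \leq j \leq n$ one has $F'_j \backslash F'_{j-1} = r^{-1}(F_j \backslash F_{j-1})$, and the strong deformation retraction restricts to a homotopy in $X$ from the inclusion $F'_j \backslash F'_{j-1} \hookrightarrow X$ to a map factoring through the categorical set $F_j \backslash F_{j-1} \hookrightarrow X$, showing $F'_j \backslash F'_{j-1}$ is categorical in $X$.

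It remains to show $F'_{n+1} \backslash F'_n = X \backslash Y$ is categorical in $X$. Fix a basepoint $x_0 \in X$, and on each component $V$ of $X \backslash Y$ use Theorem \ref{structuretheorem}(1) to pick a homotopy $V \times [0,1] \to V$ from the identity to a constant at some $v_V \in V$, then concatenate with any chosen path in $X$ from $v_V$ to $x_0$. Gluing these component-wise defines $H : (X \backslash Y) \times [0,1] \to X$. The step to watch is the continuity of the assembled $H$ — gluing contractions across a potentially infinite collection of components is where one would ordinarily expect trouble — but here it is automatic: each component $V$ is open in $X \backslash Y$ (since $X \backslash Y$ is open in the locally path-connected space $X$), so $\{V \times [0,1]\}$ is an open cover of $(X \backslash Y) \times [0,1]$ on which $H$ is piecewise continuous by construction. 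In particular no null-sequence control of component diameters, and no coordinated choice of the paths from $v_V$ to $x_0$, is required. This completes the filtration and yields $\cat(X) \leq n+1$, as desired.
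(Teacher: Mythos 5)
Your proof is correct and follows essentially the same route as the paper: both rest on the decomposition $Y\supseteq \fw(X)$ from Theorem \ref{structuretheorem}, with the contractible open components of $X\backslash Y$ contributing the final categorical layer. The only difference is that you inline the applications of Propositions \ref{splitprop} and \ref{homcatcorpairs} by explicitly pulling the filtration of $\fw(X)$ back along the deformation retraction $r$, which is exactly what those propositions encapsulate.
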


\begin{proof}
By Theorem \ref{structuretheorem}, there is a closed set $Y\subseteq X$ that strongly deformation retracts onto $\fw(X)$ and such that $X\backslash Y$ is a disjoint union of open contractible sets. Thus $\cat_X(X\backslash Y)=0$. By Proposition \ref{splitprop}, we have $\cat(X)\leq \cat_{X}(Y)+1$. Since the identity map gives a homotopy equivalence of pairs $(X,\fw(X))\to (X,Y)$, we have $\cat_X(Y)=\cat_X(\fw(X))$ by Proposition \ref{homcatcorpairs}.
\end{proof}

The following theorem covers the case where the wild set of a one-dimensional Peano continuum is zero-dimensional, e.g. homeomorphic to the Cantor set (see Figure \ref{fig:cantor}). We note that this case is not covered by our main results because $0$-dimensional continua are not locally path connected unless they are finite and discrete and thus the $\fw$-stable condition may not be met.

\begin{theorem}\label{dendritetheorem}
If $X$ is a one-dimensional Peano continuum with $\fw(X)\neq\emptyset$ and there exists a dendrite $D$ such that $\fw(X)\subseteq D\subseteq X$, then $\cat(X)=1$ and $\tc(X)=2$. In particular, this holds if $\fw(X)$ is zero-dimensional.
\end{theorem}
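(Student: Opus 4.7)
The strategy is to treat $\cat(X)$ and $\tc(X)$ separately, using the dendrite hypothesis for the upper bounds and an embedded finite subgraph as a retract for the $\tc$ lower bound.

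For $\cat(X)\le 1$, I would exploit that the dendrite $D$ is contractible, so the composite inclusion $\fw(X)\hookrightarrow D\hookrightarrow X$ is null-homotopic; thus $\fw(X)$ is categorical in $X$, the trivial filtration gives $\cat_X(\fw(X))=0$, and Corollary~\ref{wilditerationcor} yields $\cat(X)\le 1$. For $\cat(X)\ge 1$, any wild point witnesses an essential loop in $X$, so $X$ is not contractible and Theorem~\ref{thmcatproperties}(1) applies. The upper bound $\tc(X)\le 2$ is then obtained by chaining Theorem~\ref{thmtcproperties}(3) and Theorem~\ref{thmcatproperties}(4): $\tc(X)\le\cat(X\times X)\le 2\cat(X)=2$.

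The main obstacle is the lower bound $\tc(X)\ge 2$. The plan is to exhibit a finite graph with at least two independent cycles as a retract of $X$ and then apply Example~\ref{graphexample} and Lemma~\ref{retractlemma}. Fix a wild point $x_0\in\fw(X)$ and a decreasing neighborhood basis $U_1\supseteq U_2\supseteq\cdots$ with $\bigcap_n U_n=\{x_0\}$. In each $U_n$, wildness provides a loop $\alpha_n$ essential in $X$; were the image of $\alpha_n$ to contain no simple closed curve it would be a dendrite, and the factoring lemma stated just before Lemma~\ref{retractlemma} (a loop factoring surjectively through a dendrite is null-homotopic in $X$) would make $\alpha_n$ null-homotopic, a contradiction. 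Hence each image contains a simple closed curve $C_n\subseteq U_n$. Since no simple closed curve collapses to the single point $x_0$, a pigeonhole argument against $\bigcap_n U_n=\{x_0\}$ forces two distinct simple closed curves $C,C'\subseteq X$ to appear in the sequence. Joining them by an arc in $X$ if necessary produces a connected Peano subcontinuum $A\subseteq X$ that is a finite graph with at least two independent cycles. By Lemma~\ref{retractlemma}, $A$ is a retract of $X$; pulling any motion-plan filtration of $X\times X$ back via the retraction gives one of $A\times A$ with the same number of pieces, so $\tc(A)\le\tc(X)$, and Example~\ref{graphexample} delivers $\tc(X)\ge\tc(A)=2$.

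The final clause reduces to the main statement once one recalls the classical continuum-theoretic fact that every compact zero-dimensional subset of a Peano continuum is contained in some dendrite of that continuum. The hardest point in the whole argument is the extraction of two \emph{distinct} simple closed curves from arbitrarily small essential loops at a wild point; the shrinking-neighborhood pigeonhole is the key idea, and it rests essentially on the one-dimensional principle that a loop whose image contains no simple closed curve must be null-homotopic in $X$.
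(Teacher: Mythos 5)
Your treatment of the main statement is essentially the paper's: $\cat(X)\le 1$ via Corollary \ref{wilditerationcor} and the contractibility of $D$, the lower bound from non-contractibility, $\tc(X)\le 2$ from $\tc(X)\le\cat(X\times X)\le 2\cat(X)$, and $\tc(X)\ge 2$ by retracting onto a finite graph with two independent cycles. Your extraction of that graph is more detailed than the paper's one-line assertion, and the shrinking-neighborhood argument producing two \emph{distinct} simple closed curves $C,C'$ is correct. One imprecision: when $C\cap C'\neq\emptyset$ the union $C\cup C'$ need not be a finite graph (two simple closed curves in a one-dimensional Peano continuum can intersect in a Cantor set), so ``joining them by an arc if necessary'' does not by itself produce a finite graph. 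The fix is easy: if $C\cap C'=\emptyset$, join by an arc to get a dumbbell; otherwise take $q\in C'\setminus C$, let $J$ be the component of $C'\setminus C$ containing $q$, and observe that $C\cup\ov{J}$ is a theta-curve or wedge of two circles, hence a finite graph with first Betti number $2$. You should state this, since the whole lower bound hinges on having an honest finite graph to which Lemma \ref{retractlemma} and Example \ref{graphexample} apply.

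The genuine gap is in the final clause. You assert as ``classical'' that every compact zero-dimensional subset of a Peano continuum is contained in a dendrite \emph{of that continuum}, and deduce the zero-dimensional case directly. This statement is not a standard quotable fact, you give neither proof nor reference, and it is doing all the work. (Note that the superficially similar statement with ``arc'' in place of ``dendrite'' is false: the antipodal points of the circles of the Hawaiian earring, together with their limit, lie on no arc.) The paper instead invokes Eda's result \cite{Edazerodim} that a one-dimensional Peano continuum $Y$ with $\fw(Y)$ zero-dimensional is homotopy equivalent to a continuum $X$ admitting a dendrite $D$ with $\fw(X)\subseteq D$, and then uses homotopy invariance of $\cat$ and $\tc$. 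The very existence of Eda's lemma, which would be redundant if your fact were classical, is a warning sign that locating such a dendrite inside the \emph{original} space is not routine. Either supply a proof of your containment claim (a nested-trees construction with a null sequence of connecting arcs is plausible but requires a careful verification that the closure is locally connected and circle-free) or replace this step by the citation of Eda's lemma together with the homotopy invariance established in Section \ref{secLScatandTC}.
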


\begin{proof}
Since $X$ is not contractible $1\leq \cat(X)$. Corollary \ref{wilditerationcor} gives $\cat(X)\leq \cat_{X}(\fw(X))+1$. Since $D$ is contractible and $\fw(X)\subseteq D$, $\fw(X)$ is categorical in $X$. Thus $\cat_{X}(\fw(X))=0$. We conclude that $\cat(X)=1$. 

By the results of Section \ref{secLScatandTC}, we have $\tc(X)\leq \cat(X\times X)\leq 2\cat(X)=2$. We claim that since $\fw(X)\neq\emptyset$, there exists a finite graph $G\subseteq X$ with at least two cycles (in fact, a graph with any finite number of cycles exists). Pick a point $x\in \fw(X)$ and a simply closed curve $C\subseteq X$. Let $U$ be a path-connected neighborhood of $x$ that does not contain all of $C$. Since $x\in \fw(X)$, $U$ cannot be simply connected. By Corollary \ref{uaccor}, there must be a simple closed curve $D\subseteq U$. If $C\cap D=\emptyset$, we can define $G$ to be the union of $C$, $D$, and an arc connecting them. If $|C\cap D|=1$, we take $G=C\cup D$. If $|C\cap D|>1$, we can find an arc $A\subseteq D$ whose endpoints lie in $C$ and whose interior is disjoint from $C$ and we set $G=C\cup A$. Note that for such a graph $G$, we have $\tc(G)=2$ by Example \ref{graphexample}. By Lemma \ref{retractlemma}, $G$ is a retract of $X$, giving $\tc(G)\leq \tc(X)$. Thus $\tc(X)=2$.

For the second statement, we note that Katsuya Eda has proved that if $Y$ is a one-dimensional Peano continuum with $\fw(Y)$ zero-dimensional, then $Y$ is homotopy equivalent to a one-dimensional Peano continuum $X$ and with the property that there exists a dendrite $D\subseteq X$ with $\fw(X)\subseteq D$ \cite[Lemma 2.4]{Edazerodim}. By the homotopy invariance of LS-category and the first part of the theorem, we have $\cat(Y)=\cat(X)=1$ and $\tc(Y)=\tc(X)=2$.
\end{proof}

\begin{figure}[ht]
    \centering
    \includegraphics[height=1.6in]{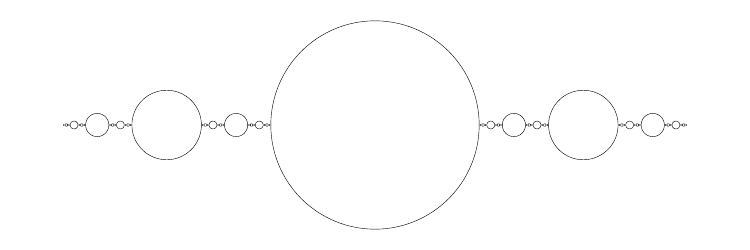}  
    \caption{A Peano Continuum $X$ whose wild set $\fw(X$) is the ternary Cantor set. Theorem \ref{dendritetheorem} applies to $X$ since there is an arc which contains $\fw(X)$. }
    \label{fig:cantor}
\end{figure}

\section{Iterated Wild Sets and LS-Category}\label{sec:Iterated Wild Sets}

Since the wild set $\fw(X)$ of a given space $X$ is a space in its own right, it is possible that $\fw(X)$ has some internal wild points, that is, $\fw(\fw(X))$ may be non-empty. Here, we consider the iteration of the wild set construction on spaces.

\begin{definition}[iterated wild sets]
Let $X$ be a space. Set $\fw^{0}(X)=X$. For each $n\in\bbn$, set $\fw^n(X)=\fw(\fw^{n-1}(X))$. We refer to $\fw^{n}(X)$ as the \textit{$n$-th iterated wild set} of $X$.
\end{definition}

Note that $X\supseteq \fw(X)\supseteq \fw^2(X)\supseteq \fw^3(X)\supseteq \cdots$ is a nested sequence of closed subspaces of $X$. It is possible that this sequence never stabilizes or that it stabilizes at a non-empty space. For example, $\fw(S)=S$ if $S$ is the Sierpinski Carpet.

\begin{definition}
The \textit{wildness rank} of a non-empty space $X$ is the smallest positive integer $\rk(X)=n$ such that $\fw^n(X)=\emptyset$. If no such $n$ exists, we say the wildness rank is infinite and write $\rk(X)=\infty$.
\end{definition}

\begin{remark}\label{iterationremark}
It follows from \cite[Theorem 1.2]{BrazasMitra} that for any (one-dimensional) compact metric space $Y$, there exists a (one-dimensional) Peano continuum $X$ such that $\fw(X)=Y$. Hence, if $X$ is a one-dimensional Peano continuum, then $\fw(X)$ may be an arbitrary compact metric space of dimension $\leq 1$. Moreover, iterating the construction $X\mapsto Y$, one may construct (one-dimensional) Peano continua of rank $n$ for any $n\ge 0$. 
\end{remark}

We will use Remark \ref{iterationremark} and the following definition to give an example of a space whose sequence of iterated wild sets fails to stabilize.

\begin{definition}
The \textit{shrinking wedge} of countable set $\{(A_j,a_j)\}_{j\in J}$ of based spaces is the space $\sw_{j\in J}(A_j,a_j)$ whose underlying set is the usual one-point union $\bigvee_{j\in J}(A_j,a_j)$ with canonical basepoint $b_0$ and where $A_j$ is identified canonically as a subset. A set $U$ is open in $\sw_{j\in J}A_j$ if
\begin{itemize}
\item $U\cap A_j$ is open in $A_j$ for all $j\in J$,
\item and whenever $b_0\in U$, we have $A_j\subseteq U$ for all but finitely many $j\in J$.
\end{itemize}
\end{definition}

\begin{example}\label{nonstableexample}
The shrinking wedge of circles $X_2=\sw_{j\in\bbn}(S^1,(1,0))$ is homeomorphic to the usual infinite earring space. If $b_0$ is the canonical wedgepoint of $X_2$, then $\fw(X_2)=\{b_0\}$ and $\fw^2(X_2)=\emptyset$. Thus $\rk(X_2)=2$. As noted in Remark \ref{iterationremark}, we may recursively apply \cite[Theorem 1.2]{BrazasMitra} to find a sequence of one-dimensional Peano continua $X_2\subseteq X_3\subseteq X_4\subseteq \cdots$ so that $\fw(X_{n+1})=X_n$ for all $n\geq 2$. In particular, $X_{n+1}$ can be constructed by attaching a sequence of based circles of null-diameter to $X_n$ by identifying the basepoint of each circle with a point from a fixed countable dense subset of $X_n$. See Figure \ref{fig:rankexample} for planar continua have the homotopy type of $X_3$ and $X_2$ respectively. With the recursive construction completed, we observe that $\rk(X_n)=n$ for all $n\geq 2$. 

Let $Y=\sw_{n\geq 2}(X_n,b_0)$ be the shrinking wedge of a copy of $X_n$ for each $n\geq 2$ and let $y_0$ denote the canonical wedgepoint of $Y$. Then $Y$ is a one-dimensional Peano continuum and there is a shift map $\sigma:Y\to Y$, which maps the $n$-th summand $X_n$ into $(n+1)$-st summand $X_{n+1}$ by the inclusion map. The wild set of $Y$ is $\fw(Y)=\sw_{n\geq 2}\fw(X_n)$ and is equal to the image $\sigma(Y)$ of the shift map. It follows that $\fw(Y)$ is proper subspace of $Y$ that is homeomorphic to $Y$. Iteration gives $Y\supseteq \fw(Y)\supseteq \fw^2(Y)\supseteq \fw^3(Y)\supseteq \cdots$ where $\fw^k(Y)=\sigma^k(Y)$ is a ``smaller" homeomorphic copy of $Y$ for all $k\geq 2$. In particular, the sequence $\{\fw^{k}(Y)\}_{k\in\bbn}$ does not stabilize at any integer $k$ and we have $\bigcap_{k\in\bbn}\fw^k(Y)=\{y_0\}$.
\end{example}

\begin{figure}[ht]
    \centering
    \includegraphics[height=1.6in]{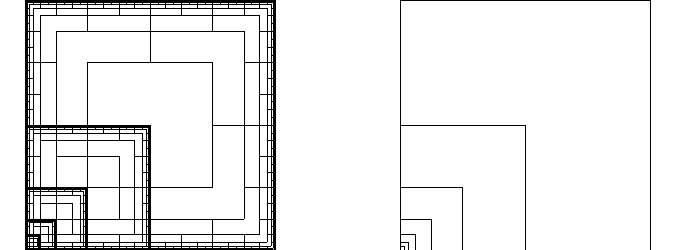}  
    \caption{A one-dimensional Peano continuum $X$ (left) whose wild set $\fw(X)$ (right) is homotopy equivalent to the infinite earring space and thus $\rk(X)=3$.}
    \label{fig:rankexample}
\end{figure} 

\begin{remark}
Our definition of wildness rank mimics the definition of familiar set-theoretic ranks since it behaves like the Cantor-Bendixson rank in many respects. Example \ref{nonstableexample} suggests that the sequence can continue non-trivially into higher infinite ordinals. However, since we show that $\cat(X)$ and $\tc(X)$ are infinite whenever $X$ is a one-dimensional Hausdorff space and $\rk(X)$ is not a finite ordinal (see Corollary \ref{catinfcor} below), we do not continue the definition of iterated wild sets into transfinite ordinals here.
\end{remark}

\begin{definition}
A space $X$ is \textit{$\fw$-stable} if the $n$-th iterated wild set $\fw^n(X)$ is locally path-connected for all $n\in\bbn$.
\end{definition}

\begin{proposition}\label{stableprop}
If $X$ is a $\fw$-stable Peano continuum, then for all $n\in\bbn$, $\fw^n(X)$ is a disjoint union of finitely many Peano continua whenever it is non-empty.
\end{proposition}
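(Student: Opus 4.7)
The plan is to unpack the definitions and show that $\fw^n(X)$ inherits enough structure from $X$ to be a finite disjoint union of Peano continua, relying only on standard facts about locally path-connected compact metric spaces.

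First, I would verify that $\fw^n(X)$ is a compact metric space. Since $\fw(Z)$ is closed in $Z$ for any space $Z$ (as observed in the remark following Definition \ref{defwildness}), an easy induction shows that each $\fw^n(X)$ is closed in $X$. Because $X$ is a Peano continuum, it is compact and metrizable, so $\fw^n(X)$ is compact and metrizable as well. By the hypothesis that $X$ is $w$-stable, $\fw^n(X)$ is also locally path-connected.

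Next, I would invoke the standard fact that in a locally path-connected space the path components coincide with the connected components and are all open. Thus $\fw^n(X)$ decomposes as a disjoint union of open connected subsets. Since $\fw^n(X)$ is compact and this is an open cover by pairwise disjoint sets, only finitely many of these components can exist; call them $C_1,\dots,C_k$. Each $C_i$ is simultaneously open and closed in $\fw^n(X)$, hence compact and metrizable. As an open subspace of a locally path-connected space, $C_i$ is itself locally path-connected, and by construction it is connected. Therefore each $C_i$ is a Peano continuum.

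Putting these pieces together gives the desired decomposition $\fw^n(X)=\bigsqcup_{i=1}^{k} C_i$ into finitely many Peano continua whenever $\fw^n(X)\neq\emptyset$. There is no real obstacle here: everything rests on the observation that ``compact $+$ locally path-connected'' forces finitely many components, each of which is automatically a Peano continuum. The only place where care is needed is to confirm that the $w$-stability hypothesis applies at the level $n$ in question, which is built into the definition.
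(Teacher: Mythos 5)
Your argument is correct and follows essentially the same route as the paper's proof: $w$-stability makes the path components of $\fw^n(X)$ open, compactness (via closedness of iterated wild sets in $X$) forces there to be finitely many, and each component is then a compact, connected, locally path-connected metrizable space, i.e.\ a Peano continuum. Your explicit induction showing $\fw^n(X)$ is closed in $X$ is a detail the paper leaves implicit, but the substance is identical.
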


\begin{proof}
Since $X$ is $\fw$-stable, each path component of $\fw^n(X)$ is open in $\fw^n(X)$. Moreover, since $X$ is compact, $\fw^n(X)$ is also compact. Thus $\fw^n(X)$ must have finitely many path components. This implies that the path components of $\fw^n(X)$ are also closed in $X$. It follows that each path-component of $\fw^n(X)$ is a Peano continuum.
\end{proof}

The proof of the next lemma is straightforward and is left as an exercise.

\begin{lemma}\label{productlemma}
Let $X$ and $Y$ be spaces. Then $\fw(X\times Y)=X\times\fw(Y)\cup \fw(X)\times Y$.
\end{lemma}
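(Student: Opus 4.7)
The plan is to prove the two set inclusions separately, using the product decomposition of the fundamental group implemented concretely through the projections $\pi_X \colon X \times Y \to X$ and $\pi_Y \colon X \times Y \to Y$. The key point on which both directions hinge is the standard fact that a loop $\gamma \colon S^1 \to X \times Y$ is null-homotopic if and only if both projected loops $\pi_X \circ \gamma$ and $\pi_Y \circ \gamma$ are null-homotopic, and moreover that any homotopy in a product can be assembled coordinate-wise from homotopies in each factor.

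For the containment $\fw(X) \times Y \cup X \times \fw(Y) \subseteq \fw(X \times Y)$, I would argue by symmetry. Take $(x,y) \in \fw(X) \times Y$ and an arbitrary neighborhood $W$ of $(x,y)$ in $X \times Y$. Shrink $W$ to a basic open set $U \times V \subseteq W$ with $U$ a neighborhood of $x$ and $V$ a neighborhood of $y$. Since $x \in \fw(X)$, there is a loop $\alpha \colon S^1 \to U$ which is essential in $X$. Then $\gamma := (\alpha, c_y) \colon S^1 \to U \times V \subseteq W$ is a loop whose $X$-projection is $\alpha$. A null-homotopy of $\gamma$ in $X \times Y$ would project to a null-homotopy of $\alpha$ in $X$, contradicting essentiality; hence $\gamma$ is essential in $X \times Y$, which shows $(x,y) \in \fw(X \times Y)$.

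For the reverse containment, I would argue the contrapositive. Suppose $(x,y) \notin \fw(X) \times Y \cup X \times \fw(Y)$, so $x \notin \fw(X)$ and $y \notin \fw(Y)$. Pick open neighborhoods $U$ of $x$ and $V$ of $y$ such that every loop in $U$ is null-homotopic in $X$ and every loop in $V$ is null-homotopic in $Y$. Given any loop $\gamma \colon S^1 \to U \times V$, write $\alpha = \pi_X \circ \gamma$ and $\beta = \pi_Y \circ \gamma$, and let $H_1 \colon S^1 \times [0,1] \to X$ and $H_2 \colon S^1 \times [0,1] \to Y$ be null-homotopies of $\alpha$ and $\beta$ respectively. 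Then $(H_1, H_2) \colon S^1 \times [0,1] \to X \times Y$ is a null-homotopy of $\gamma$. Since $U \times V$ is a neighborhood of $(x,y)$ containing no essential loop of $X \times Y$, we conclude $(x,y) \notin \fw(X \times Y)$.

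There is no real obstacle here; the lemma is a direct consequence of the product structure on $\pi_1$ together with the fact that every neighborhood of a point in a product contains a product-of-open neighborhood. The only minor care needed is ensuring the base points match up when invoking the essential/null-homotopic dichotomy, which is handled automatically by using the constant-path companion $c_y$ (respectively $c_x$) in the first direction and by taking coordinate-wise homotopies in the second.
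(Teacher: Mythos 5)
Your argument is correct: both inclusions are handled properly, using that a loop in a product is essential iff at least one of its coordinate projections is essential, and that coordinate-wise null-homotopies assemble into a null-homotopy in $X\times Y$. The paper leaves this lemma as an exercise, and your proof is exactly the standard argument the authors intend.
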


\begin{lemma}\label{iteratedwildinclusionlemma}
Let $X$ be a one-dimensional Hausdorff space. Then for all $n\geq 0$, we have $\bigcup_{i+j=n}\fw^{i}(X)\times \fw^{j}(X)\subseteq \fw^n(X\times X)$.
\end{lemma}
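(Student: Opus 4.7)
The plan is to argue by induction on $n$. The base case $n=0$ is trivial (both sides equal $X\times X$), and the case $n=1$ is exactly the containment $\fw(X)\times X\cup X\times\fw(X)\subseteq \fw(X\times X)$, which is part of Lemma \ref{productlemma}. The inductive step rests on a single geometric idea: if $(x,y)\in\fw^i(X)\times\fw^j(X)$ with $i+j=n$ and say $i\ge 1$, then we can create an essential loop near $(x,y)$ inside $\fw^{n-1}(X\times X)$ by sweeping only the first coordinate along a small essential loop in $\fw^{i-1}(X)$ while keeping the second coordinate fixed at $y$.

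Concretely, fix $(x,y)\in\fw^i(X)\times\fw^j(X)$ with $i+j=n\ge 1$. By symmetry we may assume $i\ge 1$. The inductive hypothesis applied to the index pair $(i-1,j)$ gives $\fw^{i-1}(X)\times\fw^j(X)\subseteq \fw^{n-1}(X\times X)$. To show $(x,y)\in\fw^n(X\times X)=\fw(\fw^{n-1}(X\times X))$, let $U$ be an arbitrary neighborhood of $(x,y)$ in $\fw^{n-1}(X\times X)$. Then $U\cap(\fw^{i-1}(X)\times\fw^j(X))$ is a neighborhood of $(x,y)$ in the subspace $\fw^{i-1}(X)\times\fw^j(X)$, so by the definition of the product topology it contains a basic open set $V_1\times V_2$ with $V_1$ open in $\fw^{i-1}(X)$ containing $x$ and $V_2$ open in $\fw^j(X)$ containing $y$. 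Since $x\in\fw(\fw^{i-1}(X))$, there is a loop $\alpha\colon S^1\to V_1$ which is essential in $\fw^{i-1}(X)$, and we define $\beta\colon S^1\to V_1\times\{y\}\subseteq U$ by $\beta(s)=(\alpha(s),y)$.

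It remains to verify that $\beta$ is essential in $\fw^{n-1}(X\times X)$. Suppose instead that $\beta$ bounds a null-homotopy $H\colon S^1\times[0,1]\to \fw^{n-1}(X\times X)$. Composing with the first-coordinate projection $p_1\colon X\times X\to X$ yields a null-homotopy of $\alpha=p_1\circ\beta$ in $X$. But Lemma \ref{onedimwildinclusion} guarantees that the inclusion $\fw^{i-1}(X)\hookrightarrow X$ of subspaces of the one-dimensional Hausdorff space $X$ is $\pi_1$-injective, so $\alpha$ would then also be null-homotopic in $\fw^{i-1}(X)$, contradicting the choice of $\alpha$. Hence $\beta$ is the desired essential loop in $U$, proving $(x,y)\in\fw^n(X\times X)$.

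The step I expect to require the most care is the inductive bookkeeping: translating a neighborhood of $(x,y)$ taken in the larger ambient space $\fw^{n-1}(X\times X)$ into a genuine product neighborhood in the smaller space $\fw^{i-1}(X)\times\fw^j(X)$ that actually supports the construction of $\beta$. This is precisely where the inductive containment $\fw^{i-1}(X)\times\fw^j(X)\subseteq \fw^{n-1}(X\times X)$ is used, and it makes the rest of the argument — an application of $\pi_1$-injectivity in dimension one — essentially a routine lifting.
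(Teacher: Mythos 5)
Your proof is correct and follows essentially the same route as the paper: an induction on $n$ driven by the fact that subspaces of a one-dimensional Hausdorff space include $\pi_1$-injectively (Lemma \ref{onedimwildinclusion}), so a loop that is essential in an iterated wild set, crossed with a constant coordinate, remains essential in the ambient iterated wild set of $X\times X$. The only difference is cosmetic: the paper's inductive step cites Lemma \ref{productlemma} together with the observation that the product embedding $\fw^{i}(X)\times\fw^{j}(X)\to \fw^{n}(X\times X)$ is $\pi_1$-injective (so Lemma \ref{injectivelemma} applies), whereas you inline that reasoning by constructing the essential loop $\beta(s)=(\alpha(s),y)$ directly and checking essentiality via the projection $p_1$.
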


\begin{proof}
Before proving the statement in the lemma, we make a general observation. Fix $i,j$ and $\fw^{i}(X)\times\fw^j(X)\subseteq Z\subseteq X\times X$. It follows from Lemma \ref{onedimwildinclusion} that each inclusion map $\fw^{i}(X) \to X$ is $\pi_1$-injective. Thus the product embedding $\fw^{i}(X)\times\fw^j(X)\to X\times X$ is $\pi_1$-injective for all $i,j$. Moreover, the inclusion $\fw^{i}(X)\times\fw^j(X)\to  Z$ must be $\pi_1$-injective and we have $\fw(\fw^{i}(X)\times\fw^j(X))\subseteq \fw(Z)$ by Lemma \ref{onedimwildinclusion}.

The case of $n=0$ is clear and the case $n=1$ is one inclusion of the equality in Lemma \ref{productlemma}. The cases $n\geq 2$ are proved by induction on $n$. Supposing $\fw^{i}(X)\times \fw^{j}(X)\subseteq \fw^n(X\times X)$, we have
\[\fw^{i}(X)\times \fw^{j+1}(X)\cup\fw^{i+1}(X)\times \fw^{j}(X) = \fw(\fw^{i}(X)\times \fw^{j}(X))\subseteq \fw^{n+1}(X\times X)\]
where the equality is an application of Lemma \ref{productlemma} and the inclusion follows from the observation made in the first paragraph.
\end{proof}

\begin{theorem}\label{hetheorem}
If $f:X\to Y$ and $g:Y\to X$ are inverse homotopy equivalences, then $f|_{\fw(X)}:\fw(X)\to \fw(Y)$ and $g|_{\fw(Y)}:\fw(Y)\to\fw(X)$ are inverse homotopy equivalences. In particular, $(X,\fw(X))$ and $(Y,\fw(Y))$ are homotopy equivalent pairs.
\end{theorem}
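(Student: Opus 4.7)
The plan is to exploit two facts: every homotopy equivalence is $\pi_1$-injective at every basepoint, and by Lemma \ref{injectivelemma} a $\pi_1$-injective map sends wild sets into wild sets. Since $f$ and $g$ are inverse homotopy equivalences, both are $\pi_1$-injective, so Lemma \ref{injectivelemma} immediately yields $f(\fw(X))\subseteq \fw(Y)$ and $g(\fw(Y))\subseteq \fw(X)$; the restrictions $f|_{\fw(X)}$ and $g|_{\fw(Y)}$ are therefore well defined.

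The heart of the proof is to show that any homotopy $H\colon X\times\ui\to X$ from $g\circ f$ to $\text{id}_X$ automatically sends $\fw(X)\times\ui$ into $\fw(X)$. For each $t\in\ui$, the partial track $H|_{X\times[t,1]}$ is a homotopy from $H_t:=H(\cdot,t)$ to $\text{id}_X$, so $H_t$ is homotopic to a homotopy equivalence and is therefore itself a homotopy equivalence. In particular $H_t$ is $\pi_1$-injective, so Lemma \ref{injectivelemma} applied once more gives $H_t(\fw(X))\subseteq \fw(X)$ for every $t$. Consequently $H$ restricts to a homotopy $\fw(X)\times\ui\to\fw(X)$ from $g\circ f|_{\fw(X)}$ to $\text{id}_{\fw(X)}$. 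Running the same argument with a homotopy $f\circ g\simeq\text{id}_Y$ yields a homotopy $\fw(Y)\times\ui\to\fw(Y)$ from $f\circ g|_{\fw(Y)}$ to $\text{id}_{\fw(Y)}$, so $f|_{\fw(X)}$ and $g|_{\fw(Y)}$ are mutually inverse homotopy equivalences.

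The ``in particular'' clause then follows at no extra cost: by construction, the two homotopies produced above are homotopies of self-maps of pairs $(X,\fw(X))$ and $(Y,\fw(Y))$, and $f,g$ themselves are maps of pairs, so $f\colon(X,\fw(X))\to(Y,\fw(Y))$ and $g\colon(Y,\fw(Y))\to(X,\fw(X))$ are inverse homotopy equivalences of pairs.

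The only subtle step is recognizing that every slice $H_t$ is a homotopy equivalence; this rests solely on the general fact that being a homotopy equivalence is preserved under homotopy, together with the observation that $H|_{X\times[t,1]}$ gives a homotopy from $H_t$ to $\text{id}_X$. Once that is in place, the rest is bookkeeping through Lemma \ref{injectivelemma}, and notably no one-dimensionality hypothesis is required for the argument.
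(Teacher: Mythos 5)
Your proof is correct and follows essentially the same route as the paper: both hinge on Lemma \ref{injectivelemma} applied to the homotopy to show it carries $\fw(X)\times\ui$ into $\fw(X)$, after which the restricted homotopies exhibit $f|_{\fw(X)}$ and $g|_{\fw(Y)}$ as inverse equivalences (and equivalences of pairs). The only cosmetic difference is that you argue slice-by-slice (each $H_t$ is homotopic to $\mathrm{id}_X$, hence a homotopy equivalence, hence $\pi_1$-injective), whereas the paper treats $H$ as a single $\pi_1$-injective map and uses Lemma \ref{productlemma} to identify $\fw(X\times\ui)=\fw(X)\times\ui$; both versions are fine.
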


\begin{proof}
The restricted functions are well-defined by Lemma \ref{injectivelemma}. If $H:X\times \ui\to X$ is a homotopy from $id_{X}$ to $g\circ f$, then $H$ is $\pi_1$-injective and $H(\fw(X\times\ui))=H(\fw(X)\times\ui)\subseteq \fw(X)$ using Lemma \ref{productlemma} for the first equality. Thus $H$ restricts to a map $\fw(X)\times\ui\to \fw(X)$ that is a homotopy from $id_{\fw(X)}$ to $g|_{\fw(Y)}\circ f|_{\fw(X)}$. The desired homotopy for the other composition can be obtained using the same argument.
\end{proof}

The next corollary follows by combining Theorems \ref{homcatcorpairs} and \ref{hetheorem}.

\begin{corollary}\label{hecatcor}
If $X$ and $Y$ are homotopy equivalent spaces, then $\cat_{X}(\fw(X))=\cat_{Y}(\fw(Y))$.
\end{corollary}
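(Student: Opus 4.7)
The plan is to chain the two results in the obvious way: Theorem \ref{hetheorem} delivers a homotopy equivalence of \emph{pairs} $(X,\fw(X))\simeq(Y,\fw(Y))$, and Proposition \ref{homcatcorpairs} then converts that into equality of relative LS-categories. So there is essentially no new work beyond invoking the two statements.

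In detail, I would first fix inverse homotopy equivalences $f\colon X\to Y$ and $g\colon Y\to X$. Theorem \ref{hetheorem} asserts that the restrictions $f|_{\fw(X)}\colon\fw(X)\to\fw(Y)$ and $g|_{\fw(Y)}\colon\fw(Y)\to\fw(X)$ are well-defined inverse homotopy equivalences, and moreover that the homotopies $H\colon g\circ f\simeq\mathrm{id}_X$ and $K\colon f\circ g\simeq\mathrm{id}_Y$ restrict to homotopies between the corresponding restricted compositions on $\fw(X)$ and $\fw(Y)$. This is precisely the data of a homotopy equivalence of pairs $(f,f|_{\fw(X)})\colon(X,\fw(X))\to(Y,\fw(Y))$.

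Then I would apply Proposition \ref{homcatcorpairs} directly to this homotopy equivalence of pairs, which yields $\cat_X(\fw(X))=\cat_Y(\fw(Y))$, completing the proof.

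There is no real obstacle here; the content has already been absorbed into Theorem \ref{hetheorem} (where the key $\pi_1$-injectivity of the homotopy and Lemma \ref{productlemma} do the actual work of showing the homotopies restrict correctly) and into Proposition \ref{homcatcorpairs} (which is a routine adaptation of the standard homotopy invariance argument to closed filtrations). So the corollary really is a two-line application of those two statements, and the proposal above is essentially the entire proof.
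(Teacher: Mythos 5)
Your proposal is correct and is exactly the paper's argument: the paper also deduces the corollary by combining Theorem \ref{hetheorem} (homotopy equivalence of the pairs $(X,\fw(X))$ and $(Y,\fw(Y))$) with Proposition \ref{homcatcorpairs}. No gaps; nothing further is needed.
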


\begin{corollary}
Wild rank is an invariant of homotopy type.
\end{corollary}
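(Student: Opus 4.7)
The plan is a short induction on $n$ driven entirely by Theorem \ref{hetheorem}. The base case $n=0$ is the hypothesis $X\simeq Y$, since $\fw^0(X)=X$ and $\fw^0(Y)=Y$ by definition. For the inductive step, I would assume $\fw^n(X)\simeq \fw^n(Y)$ and choose inverse homotopy equivalences $\varphi\colon \fw^n(X)\to \fw^n(Y)$ and $\psi\colon\fw^n(Y)\to\fw^n(X)$. Theorem \ref{hetheorem} is stated for arbitrary spaces, so I would apply it with $\fw^n(X)$ and $\fw^n(Y)$ playing the roles of $X$ and $Y$; this yields inverse homotopy equivalences between $\fw(\fw^n(X))=\fw^{n+1}(X)$ and $\fw(\fw^n(Y))=\fw^{n+1}(Y)$, completing the induction. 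Consequently $\fw^n(X)\simeq \fw^n(Y)$ for every $n\geq 0$.

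The second step is to read off the equality of ranks from these homotopy equivalences. The observation I need is that homotopy equivalence preserves non-emptiness: if $\fw^n(X)=\emptyset$ while $\fw^n(Y)\neq\emptyset$, there is no function $\fw^n(Y)\to\fw^n(X)$ at all, so the two spaces cannot be homotopy equivalent. Hence $\fw^n(X)=\emptyset$ if and only if $\fw^n(Y)=\emptyset$, and the least $n$ witnessing this is the same for both spaces. With the convention that $\rk=\infty$ when no such $n$ exists, this gives $\rk(X)=\rk(Y)$.

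The proof is essentially mechanical once Theorem \ref{hetheorem} is available, and I do not anticipate a genuine obstacle. The only point that warrants a moment's care is the final emptiness argument, which rests on the trivial fact that a homotopy equivalence requires maps in both directions; this is precisely what prevents the descending sequence of iterated wild sets from terminating at different stages for $X$ and $Y$.
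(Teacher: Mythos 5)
Your proposal is correct and matches the paper's argument: the paper also iterates Theorem \ref{hetheorem} to get $\fw^n(X)\simeq\fw^n(Y)$ for all $n$ (your induction just makes this explicit) and then concludes with the same observation that a non-empty space cannot be homotopy equivalent to the empty space.
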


\begin{proof}
Suppose $X\simeq Y$. Then Theorem \ref{hetheorem} implies that $\fw^n(X)\simeq \fw^n(Y)$ for all $n\in\bbn$. Since a non-empty space cannot be homotopy equivalent to an empty space, it follows that $\fw^n(X)=\emptyset$ if and only if $\fw^n(Y)=\emptyset$. Thus $\rk(X)=\rk(Y)$.
\end{proof}

\begin{lemma}\label{homlemma}
If $X$ is a one-dimensional Hausdorff space and $H:X\times\ui\to X$ satisfies $H(x,0)=x$ for all $x\in X$, then $H(x,t)=x$ for all $(x,t)\in\fw(X)\times\ui$.
\end{lemma}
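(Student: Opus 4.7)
Suppose for contradiction that some $x_0 \in \fw(X)$ and $t_0 \in \ui$ satisfy $y_0 := H(x_0, t_0) \neq x_0$.

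I would first show that $H\colon X\times\ui \to X$ is $\pi_1$-injective. The map $(x,t,s)\mapsto H(x,(1-s)t)$ provides a homotopy from $H$ to the projection $p_1\colon X\times\ui \to X$; since $p_1$ is a homotopy equivalence with inverse $x\mapsto(x,0)$, it follows that $H_{\#}$ is an isomorphism on fundamental groups. Applying Lemma~\ref{injectivelemma} together with Lemma~\ref{productlemma} (using $\fw(\ui)=\emptyset$, so that $\fw(X\times\ui) = \fw(X)\times\ui$) yields $H(\fw(X)\times\ui)\subseteq \fw(X)$. Consequently $y_0\in\fw(X)$, and the path $\gamma\colon\ui \to \fw(X)$, $\gamma(t) := H(x_0,t)$, stays entirely in $\fw(X)$.

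Next I would use the Hausdorff property and wildness at $x_0$. Separate $x_0$ and $y_0$ by disjoint open sets $U\ni x_0$, $V\ni y_0$ in $X$. By continuity of $H$ at $(x_0,0)$ and $(x_0,t_0)$, choose an open neighborhood $W\subseteq U$ of $x_0$ and $\epsilon>0$ with $H(W\times[0,\epsilon))\subseteq U$ and $H(W\times(t_0-\epsilon,t_0+\epsilon))\subseteq V$. By wildness of $x_0$ there is an essential loop $\alpha\colon S^1\to W$ in $X$. Then $G(\theta,t) := H(\alpha(\theta),t)$ furnishes a free homotopy in $X$ from $\alpha\subseteq U$ to $\beta := G(\cdot,t_0)\subseteq V$, with both loops essential in $X$ but having images in the disjoint open sets $U$ and $V$.

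The hard part is closing the argument. If $y_0$ were not wild in $X$, one could choose $V$ to contain no loops essential in $X$, forcing $\beta$ null-homotopic in $X$ and hence $\alpha$ null-homotopic via the free homotopy, contradicting the essentiality of $\alpha$. However, the first step already forces $y_0\in\fw(X)$, so this direct route fails. I would therefore proceed by induction on the wildness rank $\rk(X)$: the restriction $\hat H := H|_{\fw(X)\times\ui}\colon \fw(X)\times\ui \to \fw(X)$ is a homotopy of $id_{\fw(X)}$ on the 1-dimensional Hausdorff space $\fw(X)$, whose wildness rank is strictly smaller, so the inductive hypothesis fixes every point of $\fw^2(X)$ at all times. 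The residual case $x_0\in \fw(X)\setminus\fw^2(X)$ is the subtle one: here $x_0$ is wild in $X$ but not in $\fw(X)$, so $\fw(X)$ is semi-locally simply connected at $x_0$; this means every essential loop at $x_0$ in $X$ must locally escape $\fw(X)$, which is in tension with the confinement of $\gamma$ to $\fw(X)$ and, combined with the disjoint-neighborhood setup from the second step, is to be exploited to force $\gamma\equiv x_0$.
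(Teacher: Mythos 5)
Your setup through the second paragraph is exactly right and matches the paper's proof: separate $x_0$ from $y_0=H(x_0,t_0)$ by disjoint open sets $U,V$, shrink to a neighborhood $W$ of $x_0$ with $H(W\times\{0\})\subseteq U$ and $H(W\times\{t_0\})\subseteq V$, take an essential loop $\alpha$ in $W$, and observe that $\alpha$ and $\beta=H(\alpha(\cdot),t_0)$ are freely homotopic essential loops with disjoint images. What you are missing is the single fact that finishes the argument from there: \emph{in a one-dimensional metric space, freely homotopic essential loops cannot have disjoint images} (see the proof of Theorem~9.13(b) in \cite{BFpants}). The free homotopy $G$ takes place inside the one-dimensional Peano continuum $H(\im(\alpha)\times\ui)$, in which $\alpha$ is still essential by $\pi_1$-injectivity of subspace inclusions (Lemma~\ref{onedimwildinclusion}), so this fact gives an immediate contradiction. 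Without it, the two disjointly-imaged homotopic loops you have produced are not by themselves contradictory, and your proof does not close.

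The substitute you propose --- induction on the wildness rank --- does not work. First, the lemma is stated for arbitrary one-dimensional Hausdorff spaces, where $\rk(X)$ may be infinite (e.g.\ the Sierpinski carpet satisfies $\fw(S)=S$), so there is nothing to induct on. Second, even when $\rk(X)<\infty$ the induction carries no weight: at rank $2$ every point of $\fw(X)$ lies in your ``residual case'' $\fw(X)\setminus\fw^2(X)$, so the entire content of the lemma is concentrated in the step you leave as a ``tension \ldots to be exploited.'' The observation that $\fw(X)$ is semi-locally simply connected at such $x_0$ while the trace $\gamma(t)=H(x_0,t)$ stays in $\fw(X)$ yields no contradiction I can see ($\gamma$ is merely a path, not a loop), and you do not supply one. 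The correct route is to discard the induction entirely and invoke the disjoint-images fact at the end of your second paragraph.
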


\begin{proof}
Note that if $H(x_0,s)\neq x_0$ for some $x_0\in \fw(X)$ and $s>0$, we can define $H':X\times\ui\to X$ by $H'(x,t)=H(x,st)$, which satisfies $H'(x_0,1)=H(x_0,s)=\neq x_0$. Therefore, it suffices to show that $H(x,1)=x$ for all $x\in\fw(X)$. Fix $x\in \fw(X)$ and suppose $H(x,1)=y\neq x$. Define map $f:X\to X$ by $f(x)=H(x,1)$. Let $U$ and $V$ be disjoint open neighborhoods of $x$ and $y$ respectively. Find a neighborhood $W$ of $x$ such that $H(W\times\{0\})\subseteq U$ and $H(W\times\{1\})\subseteq V$. Since $x\in \fw(X)$, we may find an essential loop $\alpha:S^1\to W$. Now $\alpha$ and $f\circ\alpha$ are freely homotopic loops in the one-dimensional Peano continuum $H(\im(\alpha)\times\ui)$ having disjoint images. However, it is well-known that freely homotopic essential loops with disjoint images do not exist in one-dimensional metrizable spaces (see part (b) in the proof of \cite[Theorem 9.13]{BFpants}).
\end{proof}

Applying Lemma \ref{homlemma} to the homotopies in the proof of Theorem \ref{hetheorem}, we obtain the following.

\begin{corollary}\label{hequivonedcor}
If $f:X\to Y$ and $g:Y\to X$ are inverse homotopy equivalences of one-dimensional Hausdorff spaces, then $f|_{\fw(X)}:\fw(X)\to \fw(Y)$ and $g|_{\fw(Y)}:\fw(Y)\to\fw(X)$ are inverse homeomorphisms.
\end{corollary}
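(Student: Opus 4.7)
The plan is to leverage Theorem~\ref{hetheorem} to get that the restrictions $f|_{\fw(X)}$ and $g|_{\fw(Y)}$ are mutually inverse \emph{homotopy} equivalences, and then use Lemma~\ref{homlemma} to upgrade ``homotopic to the identity'' to ``equal to the identity'' on the wild sets. That instantly promotes the homotopy inverses to set-theoretic inverses, at which point continuity of the restrictions (inherited from $f$ and $g$) delivers the homeomorphism conclusion.

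More concretely, let $H\colon X\times\ui \to X$ be a homotopy with $H(x,0)=x$ and $H(x,1)=(g\circ f)(x)$, and let $K\colon Y\times\ui\to Y$ be a homotopy with $K(y,0)=y$ and $K(y,1)=(f\circ g)(y)$. First I would apply Lemma~\ref{homlemma} directly to $H$: since $X$ is one-dimensional Hausdorff and $H(x,0)=x$ for all $x\in X$, the lemma forces $H(x,t)=x$ for every $(x,t)\in \fw(X)\times \ui$. Evaluating at $t=1$ gives $(g\circ f)(x)=x$ for every $x\in \fw(X)$, i.e.\ $g|_{\fw(Y)}\circ f|_{\fw(X)} = \mathrm{id}_{\fw(X)}$. (Note that $f(\fw(X))\subseteq \fw(Y)$ by Lemma~\ref{injectivelemma} applied to the $\pi_1$-injective map $f$, as already exploited in the proof of Theorem~\ref{hetheorem}, so $g|_{\fw(Y)}\circ f|_{\fw(X)}$ makes sense as a self-map of $\fw(X)$.) The symmetric application of Lemma~\ref{homlemma} to $K$ on $Y$ yields $f|_{\fw(X)}\circ g|_{\fw(Y)} = \mathrm{id}_{\fw(Y)}$.

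At this point $f|_{\fw(X)}$ and $g|_{\fw(Y)}$ are mutually inverse bijections between $\fw(X)$ and $\fw(Y)$. They are continuous as restrictions of the continuous maps $f$ and $g$ to (closed) subspaces. A continuous bijection whose set-theoretic inverse is also continuous is a homeomorphism, so the conclusion follows.

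The only subtle point, and the step I would double-check carefully, is the applicability of Lemma~\ref{homlemma}: the lemma is stated for homotopies $X\times \ui\to X$ on a one-dimensional Hausdorff space, and $H$ is exactly such a homotopy, so there is no issue. The ``hard work'' has already been done inside Lemma~\ref{homlemma} itself (the fact that freely homotopic loops with disjoint images do not exist in one-dimensional metric spaces); here we simply package two applications of it to convert the homotopy equivalence of Theorem~\ref{hetheorem} into a strict homeomorphism. Hence the proof is essentially a one-line deduction and warrants only a brief write-up.
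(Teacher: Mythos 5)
Your proposal is correct and is precisely the argument the paper intends: the paper's one-line proof simply says to apply Lemma~\ref{homlemma} to the homotopies $H$ and $K$ appearing in the proof of Theorem~\ref{hetheorem}, which is exactly what you spell out. Your added details (that $f(\fw(X))\subseteq\fw(Y)$ via Lemma~\ref{injectivelemma}, and that mutually inverse continuous bijections are homeomorphisms) are faithful expansions of the same route, not a different one.
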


\begin{lemma}\label{stablfiltrationlemma}
Suppose $X$ is a $\fw$-stable one-dimensional Peano continuum such that $\rk(X)=n<\infty$. Then there exists closed sets $\emptyset=Y_n\subseteq  Y_{n-1}\subseteq \cdots \subseteq Y_{2}\subseteq Y_{1}\subseteq Y_0 =X$ such that
\begin{enumerate}
\item $\fw^j(X)\subseteq Y_j$ for all $1\leq j\leq n-1$,
\item $Y_j$ strongly deformation retracts onto $\fw^j(X)$ for all $1\leq j\leq n-1$,
\item $Y_{j-1}\backslash Y_{j}$ is a disjoint union of contractible sets that are open in $Y_{j-1}$ for all $1\leq j\leq n-1$,
\item $Y_j$ is a finite disjoint union of Peano continua for $1\leq j\leq n-1$,
\item $\fw^{j+1}(X)=\fw(Y_j)$ for all $1\leq j\leq n-1$.
\end{enumerate}
\end{lemma}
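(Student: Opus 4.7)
The plan is to construct the filtration recursively, iterating Theorem~\ref{structuretheorem}. Set $Y_0 := X$ and $Y_n := \emptyset$. For $1 \leq j \leq n-1$, I would define $Y_j$ by applying the ``moreover'' clause of Theorem~\ref{structuretheorem} to $\fw^{j-1}(X)$. This is legitimate: $\fw^{j-1}(X)$ is a finite disjoint union of one-dimensional Peano continua by Proposition~\ref{stableprop}, and its wild set $\fw^j(X) = \fw(\fw^{j-1}(X))$ is non-empty because $j < n = \rk(X)$. The theorem then produces a closed set $Y_j \subseteq \fw^{j-1}(X)$ with $\fw^j(X) \subseteq Y_j$, with $\fw^j(X)$ a strong deformation retract of $Y_j$ (giving condition~(2)) and each component of $Y_j \setminus \fw^j(X)$ having closure a dendrite meeting $\fw^j(X)$ at a single point (condition~(3)). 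Condition~(1) is then automatic: $\fw^j(X) \subseteq Y_j$ holds by construction, and $Y_{j+1} \subseteq \fw^j(X)$ is the construction applied at the next level; the nesting $Y_j \subseteq Y_{j-1}$ follows by combining these.

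The more delicate point is condition~(4). I would split $Y_{j-1} \setminus Y_j = (Y_{j-1} \setminus \fw^{j-1}(X)) \cup (\fw^{j-1}(X) \setminus Y_j)$. The first piece is a disjoint union of ``dendrite minus one point'' open subsets of $Y_{j-1}$, by condition~(3) applied at level $j-1$, and the second is a disjoint union of open contractible subsets of $\fw^{j-1}(X)$ by Theorem~\ref{structuretheorem}(1). Because each free dendrite $E$ in $Y_{j-1} \setminus \fw^{j-1}(X)$ meets $\fw^{j-1}(X)$ at a single point $p_E$, the components of $Y_{j-1} \setminus Y_j$ come in two flavors: either $(a)$ a full dendrite-minus-point $E \setminus \{p_E\}$ when $p_E \in Y_j$, which is plainly contractible; or $(b)$ a component $V$ of $\fw^{j-1}(X) \setminus Y_j$ together with every dendrite $E$ whose attachment point $p_E$ lies in $V$, whose contractibility follows by first collapsing each added dendrite to its attachment point and then using contractibility of $V$. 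Openness of all these components in $Y_{j-1}$ follows since $Y_j$ is closed in $Y_{j-1}$ and $Y_{j-1}$ is locally path-connected (inherited from $w$-stability of $\fw^{j-1}(X)$ together with the null-sequence decay of the attached dendrites).

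For condition~(5), $Y_j$ is closed in the compact space $X$ and hence compact; its path-components correspond bijectively to the finitely many components of $\fw^j(X)$ (by Proposition~\ref{stableprop}), each augmented by a null-sequence of attached dendrites, so $Y_j$ is a finite disjoint union of Peano continua. The main obstacle in this plan is condition~(4), specifically tracking how the free dendrites from $Y_{j-1} \setminus \fw^{j-1}(X)$ glue onto the interior components $V$ of $\fw^{j-1}(X) \setminus Y_j$, and producing a uniform deformation showing contractibility of the resulting type-$(b)$ sets when infinitely many dendrites attach at points of $V$.
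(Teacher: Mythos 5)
Your recursion is set up differently from the paper's, and the difference is exactly where your acknowledged obstacle comes from. The paper does not apply Theorem~\ref{structuretheorem} to $\fw^{j-1}(X)$; it applies it to the previously constructed set $Y_{j-1}$ itself. This is legitimate because $Y_{j-1}$ is a finite disjoint union of Peano continua that strongly deformation retracts onto $\fw^{j-1}(X)$, so by Corollary~\ref{hequivonedcor} one has $\fw(Y_{j-1})=\fw(\fw^{j-1}(X))=\fw^{j}(X)$. With that identification, a single application of Theorem~\ref{structuretheorem} to $Y_{j-1}$ hands you conditions (2), (3) \emph{and} (4) verbatim: the difference $Y_{j-1}\backslash Y_j$ is, by the theorem's statement, already a disjoint union of open contractible sets, and $Y_j=\fw^j(X)$ plus a null sequence of attached dendrites gives (5). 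No splitting of $Y_{j-1}\backslash Y_j$ into the two flavors you describe, and no regluing of the free dendrites of $Y_{j-1}\backslash\fw^{j-1}(X)$ onto the components $V$, is ever needed. (One small payoff of your version: since you apply the theorem to $\fw^{j-1}(X)$, you get $Y_j\subseteq\fw^{j-1}(X)$ on the nose, which is the literal form of condition (1).)

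As written, your route has one unfinished step, and you name it yourself: contractibility (and openness) of the type-$(b)$ sets $V\cup\bigcup\{\ov{E}\mid p_E\in V\}$ when infinitely many dendrites attach. This can be closed, but it needs an ingredient that is not in the \emph{statement} of Theorem~\ref{structuretheorem}: the attached dendrites must form a null sequence, so that collapsing each $\ov{E}$ to its attachment point simultaneously is continuous (the same device used inside the proof of the structure theorem), after which one composes with a contraction of $V$; the null-sequence property also drives the openness argument (the complement of a type-$(b)$ set in $Y_{j-1}$ is closed only because a limit of points from infinitely many distinct dendrites must be a limit of attachment points). You can either extract the null-sequence property from Meilstrup's construction or derive it from condition (3) together with local path-connectedness of $Y_{j-1}$, but some such argument must be supplied; appealing to ``null-sequence decay'' without justification leaves the step open. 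So: the plan is workable and genuinely different in its bookkeeping, but the paper's choice of recursing on $Y_{j-1}$ rather than on $\fw^{j-1}(X)$ makes condition (4) free and is the cleaner route.
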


\begin{proof}
Since $X$ is $\fw$-stable, it follows from Proposition \ref{stableprop} that $\fw^j(X)$ is a finite disjoint union of Peano continua for all $0\leq j\leq n-1$. Applying Theorem \ref{structuretheorem} to $Y_0=X$, we obtain $Y_1$ satisfying Conditions (1)-(3). Since $\fw(X)$ is a finite disjoint union of Peano continua and $Y_1$ consists of $\fw(X)$ and a null-sequence of attached dendrites, $Y_1$ is a finite disjoint union of Peano continua that deformation retracts onto $\fw(X)$. Thus (4) holds for $j=1$. Since the inclusion $\fw(Y_0)\to Y_1$ is a homotopy equivalence, we have $\fw^2(Y_0)=\fw(Y_1)$ by Corollary \ref{hequivonedcor}. Thus (5) holds for $j=1$.

If $Y_{j}$ satisfies (1)-(5) for given $1\leq j\leq n-2$, we apply the final statement of Theorem \ref{structuretheorem} (for finite disjoint unions of Peano continua) to $Y_{j}$ to obtain closed subspace $Y_{j+1}$ of $Y_j$ such that $\fw(Y_j)\subseteq Y_{j+1}\subseteq Y_j$ where $\fw(Y_j)$ is strong deformation retract of $Y_{j+1}$ and $Y_j\backslash Y_{j+1}$ is a disjoint union of contractible sets that are open in $Y_j$. We have $\fw^{j+1}(X)=\fw(Y_j)$ by the induction hypothesis, immediately implies (1) and (2). Additionally, (3) holds by part (1) of Theorem \ref{structuretheorem}. Recall from the first sentence of the proof that $\fw^{j+1}(X)=\fw(Y_j)$ is a finite disjoint union of Peano continua. Since $\fw(Y_j)$ is a finite disjoint union of Peano continua and $Y_{j+1}$ consists of $\fw(Y_j)$ and a null-sequence of attached dendrites, $Y_{j+1}$ is a finite disjoint union of Peano continua. Thus (4) holds. Finally, since the inclusion $\fw(Y_j)\to Y_{j+1}$ is a homotopy equivalence, Corollary \ref{hequivonedcor} gives $\fw^2(Y_j)=\fw(Y_{j+1})$. Thus $\fw^{j+2}(X)=\fw^2(Y_j)=\fw(Y_{j+1})$, giving (5) and finishing the induction. We remark that the last index in (5) gives $Y_n=\fw(Y_{n-1})=\fw^{n}(X)=\emptyset$ since $\rk(X)=n$.
\end{proof}

\begin{lemma}\label{stableupperboundlemma}
Suppose $X$ is a $\fw$-stable one-dimensional Peano continuum such that $\rk(X)=n<\infty$. Then $\cat(X)\leq n$. Moreover, if $\fw^{n-1}(X)$ contains no simple closed curve, then $\cat(X)\leq n-1$.
\end{lemma}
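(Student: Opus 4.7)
The plan is to prove both bounds simultaneously by induction on $n = \rk(X)$. The key structural identity is that if $\fw(X) = A_1 \sqcup \cdots \sqcup A_k$ is the decomposition into path-components (finitely many, by Proposition~\ref{stableprop}), then
\[\fw^{j+1}(X) = \bigsqcup_i \fw^{j}(A_i) \quad \text{for all } j \geq 0.\]
This is because the $A_i$ are clopen in $\fw(X)$ (by $w$-stability) and wildness is local: a point $y \in B$ of a clopen subspace $B \subseteq Y$ lies in $\fw(Y)$ if and only if it lies in $\fw(B)$, since $\pi_1(Y,y) = \pi_1(B,y)$ and $y$ has a neighborhood basis contained in $B$. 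Iterating yields the identity and shows that each $A_i$ is a $w$-stable one-dimensional Peano continuum with $\rk(A_i) \leq n-1$.

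For the base case $n = 1$ we have $\fw(X) = \emptyset$. If $X$ contains no simple closed curve, then $X$ is a dendrite, hence contractible, so $\cat(X) = 0 = n-1$. Otherwise Theorem~\ref{graphtheorem} provides a finite graph $G \subseteq X$ with a cycle that is a deformation retract of $X$, and Example~\ref{graphexample} together with homotopy invariance yields $\cat(X) = \cat(G) = 1 = n$. Both claims of the lemma hold.

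For the inductive step with $n \geq 2$, combining Corollary~\ref{wilditerationcor}, Proposition~\ref{finiteunionprop}, Proposition~\ref{splitprop}, and the inductive hypothesis gives
\[\cat(X) \leq \cat_X(\fw(X)) + 1 = \max_i \cat_X(A_i) + 1 \leq \max_i \cat(A_i) + 1 \leq n.\]
For the finer bound, suppose $\fw^{n-1}(X)$ contains no simple closed curve; by the structural identity, neither does any $\fw^{n-2}(A_i)$. Fix $i$: either $\rk(A_i) < n-1$, and the weaker inductive bound yields $\cat(A_i) \leq \rk(A_i) \leq n-2$; or $\rk(A_i) = n-1$, and the finer inductive bound (applicable because $\fw^{\rk(A_i)-1}(A_i) = \fw^{n-2}(A_i)$ contains no simple closed curve) gives $\cat(A_i) \leq n-2$. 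Either way, $\cat(X) \leq \max_i \cat(A_i) + 1 \leq n-1$.

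The heavy lifting is already packaged in Corollary~\ref{wilditerationcor}, which encodes the deformation of the complement of a closed neighborhood of $\fw(X)$ to a discrete set. The only delicate point in this plan is verifying the structural identity for iterated wild sets; this is not hard but relies crucially on the clopenness of path-components supplied by $w$-stability together with the locality of the wildness property.
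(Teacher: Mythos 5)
Your proposal is correct, but it is organized quite differently from the paper's argument. The paper does not induct on $\rk(X)$: it first proves Lemma~\ref{stablfiltrationlemma}, which iterates Theorem~\ref{structuretheorem} to build one explicit global filtration $\emptyset=Y_n\subseteq Y_{n-1}\subseteq\cdots\subseteq Y_0=X$ with each $Y_j$ deformation retracting onto $\fw^j(X)$ and each difference a disjoint union of contractible pieces; the bound $\cat(X)\le n-1$ then falls out when $Y_{n-1}$ is a union of dendrites, while in the simple-closed-curve case the bottom stage is refined by hand via Theorem~\ref{graphtheorem} and maximal spanning trees to get $\cat(X)\le n$. You instead induct on the rank, feeding the structure theorem in only one layer at a time through Corollary~\ref{wilditerationcor} and replacing the explicit filtration by the relative-category calculus (Propositions~\ref{splitprop} and \ref{finiteunionprop}), with the simple-closed-curve dichotomy absorbed into the base case via Example~\ref{graphexample}. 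The pivot of your argument, the identity $\fw^{j+1}(X)=\bigsqcup_i\fw^j(A_i)$ for the (clopen, by $w$-stability and finiteness) path components $A_i$ of $\fw(X)$, is sound: for $B$ clopen in $Y$, every loop or homotopy based in $B$ stays in $B$, so $\fw(Y)\cap B=\fw(B)$, and this also yields the $w$-stability of each $A_i$ and $\rk(A_i)\le n-1$; the case distinction in the finer bound ($\rk(A_i)<n-1$ versus $\rk(A_i)=n-1$ with $\fw^{n-2}(A_i)$ free of simple closed curves) is handled correctly. Two small remarks: some $A_i$ may be a single point (hence not one-dimensional), so the induction statement should be read as trivially covering that degenerate case; and be aware that the paper's explicit filtration is reused later for the $\tc$ upper bound (Lemma~\ref{tcupperbound2lemma}), so your shortcut, while cleaner here, does not supplant Lemma~\ref{stablfiltrationlemma} in the paper's overall architecture.
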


\begin{proof}
Lemma \ref{stablfiltrationlemma} gives a filtration of closed sets $Y_{n-1}\subseteq \cdots \subseteq Y_2\subseteq  Y_1\subseteq Y_0= X$ each of which is a finite disjoint union of Peano continua, such that each set $Y_{j-1}\backslash Y_j$ is a topological sum of contractible sets, and such that $Y_j$ strongly deformation retracts onto $\fw^j(X)$ for $1\leq j\leq n-1$. 

If $\fw^{n-1}(X)$ contains no simple closed curve, then $Y_{n-1}$ contains no simple closed curve. In this case, it follows that $Y_{n-1}$ is a finite disjoint union of dendrites and is therefore categorical in $X$. Set $Y_n=\emptyset$. The filtration $\{Y_{n-j}\}_{1\leq j\leq n}$ proves that $\cat(X)\leq n-1$.

If $\fw^{n-1}(X)$ contains a simple closed curve, then so does $Y_{n-1}$. In this case, let $P_1,P_2,\dots,P_m$ denote the connected components of $Y_{n-1}$. Since $\fw^{n}(X)=\fw(Y_{n-1})=\emptyset$ and $Y_{n-1}$ deformation retracts onto $\fw^{n-1}(X)$, each $P_i$ is a Peano continuum with no wild points. Theorem \ref{graphtheorem} implies the existence of a finite graph $G_i\subseteq P_i$ that is a strong deformation retract of $P_i$ (take $G_i$ to be a point if $P_i$ is simply connected). Choose a maximal spanning tree $T_i$ in $G_i$ for each $i$ and let $Y_n=\bigcup_{i=1}^{m}T_i$. The set $Y_{n-1}\backslash Y_{n}$ is a topological sum of sets that contain no simple closed curves and is therefore categorical in $X$. The set $Y_n$ is a finite disjoint union of finite trees and is therefore categorical in $X$. Set $Y_{n+1}=\emptyset$. The filtration $\{Y_{n-j}\}_{0\leq j\leq n}$ proves that $\cat(X)\leq n$.
\end{proof}




Here we use iterated wild sets to obtain lower bounds on the category of a one-dimensional space.

\begin{lemma}\label{steplemma}
Assume $A\subseteq B\subseteq X$ where $X$ is one-dimensional Hausdorff and $B\backslash A$ is categorical in $X$. Then $Y\subseteq B$ implies $\fw(Y)\subseteq A$.
\end{lemma}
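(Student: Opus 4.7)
The plan is to argue by contradiction, leveraging the $\pi_1$-injectivity of inclusions of one-dimensional subspaces (Lemma \ref{onedimwildinclusion}) against the null-homotopy that witnesses categoricity of $B\setminus A$. Suppose some $y\in\fw(Y)$ failed to lie in $A$; since $Y\subseteq B$, we would have $y\in B\setminus A$, so a homotopy $H\colon(B\setminus A)\times\ui\to X$ with $H(z,0)=z$ and $H(z,1)=x_0$ is defined in a neighbourhood of $y$ in $B\setminus A$.

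The decisive step is to exhibit an essential loop in $Y$, based near $y$, that lies entirely inside $B\setminus A$ so that one can compose it with $H$. For this I would use that $A$ is closed in $X$---automatic in the intended applications, where $A$ is a term $F_{j-1}$ of the closed filtration of Definition \ref{lscatdef}---which ensures that $(X\setminus A)\cap Y$ is an open neighbourhood of $y$ in $Y$ contained in $Y\setminus A\subseteq B\setminus A$. The wildness of $y$ in $Y$ then supplies an essential loop $\alpha\colon S^1\to Y\setminus A$ in $Y$.

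To finish, composing with $H$ gives $H\circ(\alpha\times\mathrm{id}_{\ui})\colon S^1\times\ui\to X$, a free homotopy from $\alpha$ to the constant loop at $x_0$; hence $\alpha$ is null-homotopic in $X$. Lemma \ref{onedimwildinclusion} then forces $\alpha$ to be null-homotopic in $Y$ as well, contradicting its essentiality. The main obstacle in the argument is precisely this decisive step: without the closedness of $A$, the point $y$ could be a limit of $A$ in $Y$ and no neighbourhood of $y$ in $Y$ avoiding $A$ would exist (indeed, taking $X$ to be the Hawaiian earring with $A=X\setminus\{0\}$, $B=Y=X$ exhibits $B\setminus A=\{0\}$ as categorical yet $\fw(Y)=\{0\}\not\subseteq A$), so closedness cannot be dropped without additional hypotheses.
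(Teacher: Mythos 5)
Your proof is correct and follows essentially the same route as the paper's: the wild point of $Y$ yields an essential loop in $Y\setminus A\subseteq B\setminus A$, and the categoricity of $B\setminus A$ together with the $\pi_1$-injectivity of Lemma \ref{onedimwildinclusion} produces the contradiction. Your remark about closedness of $A$ is well taken: the paper's proof also silently uses that $Y\setminus A$ is open in $Y$ (which holds in the only application, Lemma \ref{rankboundlemma}, where $A$ is a term of a closed filtration), and your Hawaiian-earring example shows this hypothesis cannot simply be dropped from the statement as written.
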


\begin{proof}
Suppose $x\in \fw(Y)\backslash A$. Since $Y\backslash A$ is an open neighborhood of $x$ in $Y$, there exists an essential loop $\alpha:S^1\to Y\backslash A$. Since $X$ is $1$-dimensional, the inclusion map $Y\backslash A\to X$ is $\pi_1$-inective (by Lemma \ref{onedimwildinclusion}) and thus $\alpha:S^1\to X$ is essential. Since $B\backslash A$ is assumed to be categorical in $X$, $\alpha$ cannot have image in $B$. Thus there exists a point $y\in \im(\alpha)\backslash B\subseteq Y\backslash B$.
\end{proof}

\begin{lemma}\label{rankboundlemma}
If $X$ is a one-dimensional Hausdorff space and $\cat(X)=m<\infty$, then $\rk(X)\leq m+1$. Moreover, if $\fw^{m}(X)$ has a non-simply connected path-component, then $\rk(X)\leq m$.
\end{lemma}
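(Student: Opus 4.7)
The plan is to iterate Lemma \ref{steplemma} down a categorical filtration realizing $\cat(X)=m$. Fix a closed filtration
\[
\emptyset = F_{-1} \subset F_0 \subset F_1 \subset \cdots \subset F_m = X
\]
with each $F_j\setminus F_{j-1}$ categorical in $X$. I will prove by induction on $k$ that $\fw^k(X)\subseteq F_{m-k}$ for every $0\le k\le m$, and then one additional application of Lemma \ref{steplemma} at the bottom of the filtration will force $\fw^{m+1}(X)=\emptyset$.

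For the base case $k=0$ the containment $\fw^0(X)=X=F_m$ is immediate. For the inductive step, assume $\fw^k(X)\subseteq F_{m-k}$ for some $0\le k<m$. Apply Lemma \ref{steplemma} with $A:=F_{m-k-1}$, $B:=F_{m-k}$, and $Y:=\fw^k(X)$: the categoricality hypothesis on $B\setminus A$ is built into the filtration, and $Y\subseteq B$ by the inductive hypothesis, so the lemma delivers $\fw^{k+1}(X)=\fw(Y)\subseteq A=F_{m-(k+1)}$. At $k=m$ this gives $\fw^m(X)\subseteq F_0$, and one final application of Lemma \ref{steplemma} with $A:=F_{-1}=\emptyset$, $B:=F_0$, $Y:=\fw^m(X)$ (valid because $F_0\setminus F_{-1}=F_0$ is categorical in $X$) yields $\fw^{m+1}(X)\subseteq\emptyset$. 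This proves $\rk(X)\le m+1$.

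For the moreover, I argue by contradiction. Suppose $\fw^m(X)$ had a non-simply connected path-component $P$; choose an essential loop $\alpha\colon S^1\to P$. By Lemma \ref{onedimwildinclusion} the inclusion $P\hookrightarrow X$ is $\pi_1$-injective, so $\alpha$ is essential as a loop in $X$. On the other hand, $\alpha$ factors through $\fw^m(X)\subseteq F_0$, and $F_0$ being categorical in $X$ means the inclusion $F_0\hookrightarrow X$ is null-homotopic, forcing $\alpha$ to be null-homotopic in $X$. This contradiction shows that no such $P$ can exist under $\cat(X)=m$, and hence the conclusion $\rk(X)\le m$ holds (vacuously, from the impossible hypothesis).

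The main conceptual hurdle is really just lining up the filtration indices so that the terminal step of the iteration, in which $A=\emptyset$, produces the desired emptiness of $\fw^{m+1}(X)$. The second claim rests crucially on the one-dimensional $\pi_1$-injectivity provided by Lemma \ref{onedimwildinclusion}: without it, an essential loop in $\fw^m(X)$ could in principle become trivial in $X$, and the contradiction with categoricality of $F_0$ would collapse.
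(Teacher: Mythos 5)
Your proposal is correct and follows essentially the same argument as the paper: iterate Lemma \ref{steplemma} down the categorical filtration to get $\fw^{k}(X)\subseteq F_{m-k}$ and then $\fw^{m+1}(X)\subseteq F_{-1}=\emptyset$, and for the ``moreover'' use $\pi_1$-injectivity (Lemma \ref{onedimwildinclusion}) of the inclusion into $X$ to contradict the categoricality of $F_0$. Your observation that the second claim holds because the hypothesis is incompatible with $\cat(X)=m$ is exactly the logic of the paper's contradiction argument.
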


\begin{proof}
If $\cat(X)=m<\infty$, then there exists closed sets $\emptyset =F_{-1}\subseteq F_0\subseteq F_1\subseteq F_2\subseteq \cdots \subseteq F_{m}=X$ such that $F_{j}\backslash F_{j-1}$ is categorical in $X$ for $0\leq j\leq m$. Applying Lemma \ref{steplemma} in the case where $Y=X$, $A=F_{m-1}$, $B=F_{m}$, we have $\fw(X)\subseteq F_{m-1}$. Applying Lemma \ref{steplemma} in the case where $Y=\fw(X)$, $A=F_{m-2}$, and $B=F_{m-1}$, we have $\fw^{2}(X)\subseteq F_{m-2}$. Proceeding recursively, we eventually obtain $\fw^{j}(X)\subseteq F_{m-j}$ for $0\leq j\leq m+1$. In particular, $\fw^{m+1}(X)\subseteq F_{-1}=\emptyset$, which gives $\rk(X)\leq m+1$.

For the second statement, suppose $\fw^m(X)$ has a non-simply connected path component and, to obtain a contradiction, that $\rk(X)>m$. The recursion from the first paragraph gives $\emptyset=F_{-1}\neq \fw^{m}(X)\subseteq F_0$ where $F_0$ is categorical in $X$. By assumption, $\fw^m(X)$ admits loops that are essential. Applying Lemma \ref{onedimwildinclusion} to the inclusions $\fw^m(X)\to F_0\to X$, we see that $F_0$ admits loops that are essential in $X$. However, this contradicts the fact that $F_0$ is categorical in $X$. 
\end{proof}

\begin{corollary}\label{catinfcor}
Let $X$ be a one-dimensional Hausdorff space. If $\rk(X)=\infty$, then $\cat(X)=\infty$ and $\tc(X)=\infty$.
\end{corollary}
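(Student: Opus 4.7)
The plan is a brief contrapositive. For the LS-category assertion, I would assume $\cat(X) = m < \infty$ and apply Lemma \ref{rankboundlemma} to conclude $\rk(X) \leq m + 1 < \infty$, directly contradicting the hypothesis $\rk(X) = \infty$. Hence $\cat(X) = \infty$.

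For the topological complexity assertion, I would invoke the general bound $\cat(X) \leq \tc(X)$ recorded in Theorem \ref{thmtcproperties}(3). If $\tc(X)$ were finite, this would force $\cat(X)$ to be finite, which the previous paragraph has just ruled out. Hence $\tc(X) = \infty$ as well.

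The main obstacle is essentially absent: all of the work has been done by Lemma \ref{rankboundlemma}, which already converts a finite closed categorical filtration into an iterated-wild-set bound via the key observation (Lemma \ref{steplemma}) that stepping off a level of a categorical filtration consumes exactly one level of wildness. The corollary is then just the contrapositive, packaged with the inequality $\cat(X) \leq \tc(X)$. One could alternatively give a self-contained argument for the $\tc$ half by mimicking the recursion of Lemma \ref{rankboundlemma} at the level of $X \times X$ — using an analog of Lemma \ref{steplemma} for motion plans together with Lemma \ref{iteratedwildinclusionlemma} and the elementary inclusion $\fw^n(X) \times X \subseteq \fw^n(X \times X)$ (which follows by induction from Lemma \ref{productlemma}) to force $\rk(X) \leq \tc(X) + 1$ — but this is considerably more elaborate than the two-line route through $\cat \leq \tc$, so I would not pursue it.
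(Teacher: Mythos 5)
Your proposal is correct and follows exactly the paper's argument: the $\cat$ half is the contrapositive of Lemma \ref{rankboundlemma}, and the $\tc$ half follows from the inequality $\cat(X)\leq\tc(X)$ of Theorem \ref{thmtcproperties}(3). Nothing more is needed.
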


\begin{proof}
Given $\rk(X)=\infty$, $\cat(X)=\infty$ follows from the previous lemma. Since $\cat(X)\leq \tc(X)$ (recall Theorem \ref{thmtcproperties}), we also have $\tc(X)=\infty$.
\end{proof}

\begin{example}
If $X$ is a one-dimensional Hausdorff space and there exists a non-empty subspace $A\subseteq X$ such that $\fw(A)=A$, e.g. if it contains a copy of the Sierpinski Carpet, then no finite motion plan exists for $X$ even with our modified definitions. Indeed, for such a space, Lemma \ref{onedimwildinclusion} implies that $A= \fw^n(A)\subseteq \fw^n(X)$ for all $n\geq 0$ and thus $\rk(X)=\infty$. Corollary \ref{catinfcor} then gives $\cat(X)=\infty$ and $\tc(X)=\infty$.
\end{example}

At this point, we obtain exact values for LS-category in the $\fw$-stable case.

\begin{proof}[Proof of Theorem \ref{mainthm2}]
Suppose $X$ is a $\fw$-stable one-dimensional Hausdorff space with $\rk(X)=n<\infty$. The upper bound $\cat(X)\leq n$ (and $\cat(X)\leq n-1$ if $\fw^{n-1}(X)$ contains no simple closed curve) is given in Lemma \ref{stableupperboundlemma}. The lower bound $n-1\leq \cat(X)$ (and $n\leq \cat(X)$ if $\fw^{n-1}(X)$ contains a simple closed curve) is given in Lemma \ref{rankboundlemma}.
\end{proof}

\section{Topological complexity of one-dimensional spaces}

In this section we prove Theorem \ref{mainthm4}, which generalizes neatly the computation of the topological 
complexity of graphs described in Example \ref{graphexample}.

\begin{proposition}\label{a2prop}
Let $X$ be a one-dimensional Peano continuum and $G\subseteq X$ be a (not necessarily connected) finite graph with more than one cycle. If $\emptyset \subseteq A_0\subseteq A_1\subseteq X\times X$ are closed sets and $G\times G\subseteq A_1$, then at least one of $A_0$ or $A_1\backslash A_0$ does not admit a motion plan in $X$.
\end{proposition}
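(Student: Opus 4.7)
The plan is to prove the proposition by contradiction using the closed-filtration form of the cohomological (cup-length) lower bound for topological complexity. If both $A_0$ and $A_1\backslash A_0$ admit motion plans in $X$, then the filtration $\emptyset\subseteq A_0\subseteq A_1$ shows that the relative topological complexity satisfies $\tc_X(A_1)\leq 1$. By the closed-filtration form of Theorem~\ref{thmtcproperties}(5), as developed in \cite{Pavesic1}, the cup product of any two classes in the ideal $\Ker\bigl(\Delta^\ast\colon\check H^\ast(X\times X)\to\check H^\ast(X)\bigr)$ restricts to zero on $A_1$, and hence also on $G\times G$. I would derive a contradiction by exhibiting two such classes whose product is nonzero on $G\times G$.

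The construction hinges on showing that every class in $H^1(G)$ lifts to $\check H^1(X)$. For each connected component $G_i$ of $G$ I would build a retraction $r_i\colon X\to G_i$ that is constant on every other component of $G$: first collapse each $G_j$ with $j\neq i$ to a separate point, yielding a quotient $X'$ which I claim is again a one-dimensional Peano continuum containing $G_i$ as a Peano subcontinuum; then apply Lemma~\ref{retractlemma} inside $X'$ to obtain a retraction $X'\to G_i$; and finally compose with the quotient map $X\to X'$. The pullbacks $r_i^\ast\colon H^1(G_i)\to\check H^1(X)$ then assemble into a right inverse of the restriction $\check H^1(X)\to H^1(G)=\bigoplus_i H^1(G_i)$.

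Since $G$ is a finite graph with more than one cycle, $H^1(G)$ has rank at least two, so I can choose linearly independent classes $\alpha_1,\alpha_2\in H^1(G)$ and lift them to $\tilde\alpha_1,\tilde\alpha_2\in\check H^1(X)$ via the splitting just constructed. Because $X$ is one-dimensional, $\check H^2(X)=0$ and therefore $\tilde\alpha_1\smile\tilde\alpha_2=0$. Setting $\gamma_i:=\mathrm{pr}_1^\ast\tilde\alpha_i-\mathrm{pr}_2^\ast\tilde\alpha_i\in\Ker\Delta^\ast$, the K\"unneth sign rule yields
\[
\gamma_1\smile\gamma_2 \;=\; -\,\tilde\alpha_1\otimes\tilde\alpha_2 \;+\; \tilde\alpha_2\otimes\tilde\alpha_1 \;\in\; \check H^2(X\times X).
\]
Restricting to $G\times G$ produces $-\alpha_1\otimes\alpha_2+\alpha_2\otimes\alpha_1$ in $H^1(G)\otimes H^1(G)$, which is nonzero by the linear independence of $\alpha_1,\alpha_2$. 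This contradicts the cup-length vanishing on $A_1$ and completes the proof.

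The main obstacle will be the quotient construction in the second paragraph: one must verify that collapsing a closed Peano subcontinuum of a one-dimensional Peano continuum again yields a one-dimensional Peano continuum, so that Lemma~\ref{retractlemma} still applies. A secondary subtlety lies in invoking the correct relative and closed-filtration version of the cup-length estimate rather than the absolute statement recorded in Theorem~\ref{thmtcproperties}(5); this should follow from the methods of \cite{Pavesic1}.
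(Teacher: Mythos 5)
Your strategy is sound and, as far as I can tell, every step is either correct or correctly flagged; but it is a genuinely different route from the paper's. The paper never lifts cohomology classes from $G$ to $X$: it first reduces to a single connected finite graph inside $X$ -- either a component of $G$ with at least two cycles, or the union $\widetilde C=C_1\cup A\cup C_2$ of two one-cycle components joined by an arc -- and uses Lemma~\ref{retractlemma} to push the hypothetical motion plans down into that graph. The contradiction then comes from the computation $\tc(G)=2$ of Example~\ref{graphexample} in the first case, and from Scott's cohomological lower bound for the \emph{relative} topological complexity $\tc_{\widetilde C}(C\times C)$ in the second; in both cases all cohomology lives on ENRs, so singular cohomology and an off-the-shelf citation suffice. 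Your version buys uniformity (no case split on the cycle structure of the components) and works entirely inside $X$, at the cost of two technical debts you correctly identify. First, you need a \emph{relative}, closed-filtration cup-length bound in \v Cech cohomology over the non-ENR space $X$ (``a $2$-step filtration of the closed set $A_1$ forces $2$-fold products from $\Ker\Delta^\ast$ to vanish on $A_1$''); this is not literally the statement recorded as Theorem~\ref{thmtcproperties}(5), though it does follow from the same lifting argument in \cite{Pavesic1}, and you should spell that out. Second, the dimension claim for the quotient is true but needs an argument: a compact subset of a one-dimensional separable metric space has arbitrarily small neighborhoods with zero-dimensional boundary, so the small inductive dimension of $X'$ at each collapsed point is still at most $1$, and $X'$ is a Peano continuum by Hahn--Mazurkiewicz since the decomposition into the finitely many $G_j$ and singletons is upper semicontinuous. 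You can in fact avoid the quotient altogether by borrowing the paper's arc trick for your own purpose: join the components of $G$ by finitely many arcs into a connected finite graph $\widetilde G\subseteq X$ with $b_1(\widetilde G)=b_1(G)\geq 2$ and restriction $H^1(\widetilde G)\to H^1(G)$ surjective; a single retraction $r\colon X\to\widetilde G$ from Lemma~\ref{retractlemma} then provides the splitting $\check H^1(X)\to H^1(G)$ you need, with no dimension theory required.
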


\begin{proof}
Suppose to obtain a contradiction that $A_0$ and $A_1\backslash A_0$ admit a motion plan in $X$. If $G$ has a connected component $H$, which has more than one cycle, then $H\cap A_0$ and $H\cap (A_1\backslash A_0)$ admit a motion plan in $X$. However, $H$ is a retract of $X$ by Lemma \ref{retractlemma}. Thus $H\cap A_0$ and $H\cap (A_1\backslash A_0)$ admit a motion plan in $H$. However, this contradicts the computation in 
Example \ref{graphexample}.

We now consider the case where each connected component of $G$ has either no cycles or one cycle. Let $C_1$ and $C_2$ be two 
disjoint components of $G$ each containing one cycle. Set $C=C_1\cup C_2$, $K_0=(C\times C)\cap A_0$, and
$K_1=(C\times C)\cap A_1=C\times C$. Then $K_0$ and $K_1\backslash K_0$ admit motion plans in $X$. Since $X$ is Hausdorff and path connected, there exists an injective path 
$\alpha:\ui\to X$ with $\alpha(0)\in C_1$ and $\alpha(1)\in C_2$. Since $C_1$ and $C_2$ are closed disjoint sets, there must exist some connected component $(s,t)$ of $\alpha^{-1}(\im(\alpha)\backslash (C_1\cup C_2))$ such that $\alpha(s)\in C_1$ and $\alpha(t)\in C_2$. In particular, $A=\alpha([s,t])$ is an arc that meets $C_1$ at one endpoint, meets $C_2$ at the other endpoint, and whose interior is disjoint from $C_1\cup C_2$. Note that $\widetilde C=C_1\cup A\cup C_2$ is a connected graph with exactly two cycles. 
Since $\widetilde C$ is a retract of $X$ by Lemma \ref{retractlemma}, $K_0$ and $K_1\backslash K_0$ 
admit motion plans in $\widetilde C$. In other terms, the relative topological complexity 
$\tc_{\widetilde C}(C\times C)$ is at most 1. However, we will show that by a cup-length argument 
$\tc_{\widetilde C}(C\times C)\ge 2$. 

First observe, that $\tc_{\widetilde C}(C\times C)$ can be equivalently described as the topological
complexity of the inclusion map $i\colon C\hookrightarrow \widetilde C$ in the sense of 
\cite[Example 3.3]{Scott}. 
Then \cite[Theorem 3.11]{Scott} provides a cohomological estimate for relative topological complexity 
as $\tc_{\widetilde C}(C\times C)\ge \text{ cup-length }\big((i\times i)^*(\Ker \Delta^*)\big),$
where $\Delta^*\colon H^*(\widetilde C\times \widetilde C)\to H^*(\widetilde C)$ is induced by the
diagonal map $\Delta\colon\widetilde C\to\widetilde C\times \widetilde C$ and 
$(i\times i)^*\colon H^*(\widetilde C\times \widetilde C)\to H^*(C\times C)$ is induced by the inclusion
$i\times i\colon C\times C\hookrightarrow \widetilde C\times \widetilde C$. 
(Note that $C$ and $\widetilde C$ are ENRs, so we may use singular cohomology in our computations.) 
Since the homomorphism 
$(i\times i)^*$ is clearly injective, it follows that 
$\text{cup-length }\big((i\times i)^*(\Ker \Delta^*)\big)\ge \text{ cup-length }(\Ker \Delta^*).$
It is well-known (see for example \cite[Prop. 4.42]{Farber2008}) that 
$\text{cup-length }(\Ker \Delta^*)=2$, which concludes our proof.
\end{proof}

\begin{lemma}\label{torusobstructionlemma}
Suppose that $A\subseteq X\times X$. If there exists an essential loop $\alpha:S^1\to X$ and 
a point $x\in X$ such that $\im(\alpha)\times\{x\}\subseteq A$ or $\{x\}\times\im(\alpha)\subseteq A$, then 
$A$ does not admit a motion plan in $X$.
\end{lemma}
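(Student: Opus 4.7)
The plan is a direct contradiction argument using the universal property of the path fibration. Assume, toward contradiction, that $A$ admits a motion plan, i.e.\ a continuous section $s\colon A\to PX$ of $\ev_{0,1}\colon PX\to X\times X$. By symmetry, I may assume we are in the case $\im(\alpha)\times\{x\}\subseteq A$; the other case is handled identically after swapping coordinates.

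The key step is to build a null-homotopy of $\alpha$ out of $s$. Define
\[
H\colon S^{1}\times \ui \to X, \qquad H(z,t) \;=\; s\bigl(\alpha(z),x\bigr)(t).
\]
This is well-defined because $(\alpha(z),x)\in\im(\alpha)\times\{x\}\subseteq A$ for every $z\in S^{1}$, and it is continuous as the composition of the continuous map $z\mapsto(\alpha(z),x)$ into $A$, the section $s\colon A\to PX$, and the evaluation $PX\times\ui\to X$ (the last is continuous because $\ui$ is locally compact Hausdorff and $PX$ carries the compact-open topology). From the defining property of $s$ as a section of $\ev_{0,1}$, we get $H(z,0)=\alpha(z)$ and $H(z,1)=x$ for all $z\in S^{1}$, so $H$ is a based null-homotopy of $\alpha$ in $X$. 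This contradicts the hypothesis that $\alpha$ is essential.

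I expect no real obstacle here: the only subtle point is verifying continuity of the adjoint map $H$, which is standard once one notes that $\ui$ is locally compact Hausdorff so that the compact-open topology on $PX=X^{\ui}$ corresponds to continuity of the adjoint. I would write this explicitly so the reader does not need to invoke the exponential law implicitly. Finally, I will remark that the integer $n\geq 1$ in the hypothesis plays no role in the argument; essentiality of $\alpha$ is the only property used.
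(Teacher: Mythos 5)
Your argument is correct and is essentially the paper's own proof: the paper composes the loop $\gamma(z)=(\alpha(z),x)$ in $A$ with the two projections $p_1,p_2\colon A\to X$, which are homotopic by definition of admitting a motion plan, and this homotopy is exactly the adjoint $H(z,t)=s(\alpha(z),x)(t)$ that you build from the section $s$ of $\ev_{0,1}$. The only small slip is calling $H$ a \emph{based} null-homotopy (it need not fix the basepoint, since $s(\alpha(z_0),x)$ is generally not constant), but a loop freely homotopic to a constant map is null-homotopic, so the contradiction with essentiality of $\alpha$ stands.
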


\begin{proof}
Let $\alpha:S^1\to X$ be an essential loop and $x\in X$ be such that $\im(\alpha)\times\{x\}\subseteq A$. Suppose to obtain a contradiction that $A$ admits a motion plan in $X$. Let $\gamma:S^1\to A$ be given by $\gamma(t)=(\alpha(t),x)$. Since the projections $p_1,p_2:A\to X$ are homotopic, the loops $f_1=p_1\circ \gamma:S^1\to X$ and $f_2=p_2\circ \gamma: S^1\to X$ are homotopic. However, $f_1=\alpha$ is essential and $f_2$ is the constant map at $x$; a contradiction. The other case follows using a symmetric argument.
\end{proof}

\begin{lemma}\label{tcobstructionlemma}
Suppose $X$ is a one-dimensional Hausdorff space and $A\subseteq B\subseteq X\times X$ are closed. If $B\backslash A$ admits a motion plan in $X$, then $\fw(Y\times Z)\subseteq A$ for all $Y\times Z\subseteq B$.
\end{lemma}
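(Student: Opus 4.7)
The plan is to argue by contradiction: suppose $(y,z) \in \fw(Y\times Z) \setminus A$ for some product $Y\times Z \subseteq B$, and extract a configuration forbidden by Lemma \ref{torusobstructionlemma} applied to $B\setminus A$.

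First I would use Lemma \ref{productlemma} to conclude that either $y\in \fw(Y)$ or $z\in \fw(Z)$; by the symmetric roles of the two factors it suffices to handle the first case. Since $A$ is closed in $X\times X$, the complement $(Y\times Z)\setminus A$ is open in $Y\times Z$ and contains $(y,z)$, so I can select a basic open product neighborhood $U\times V$ of $(y,z)$ in $Y\times Z$ with $U\times V \subseteq (Y\times Z)\setminus A \subseteq B\setminus A$, where $U$ is open in $Y$ and $V$ is open in $Z$.

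Now the wildness of $y$ in $Y$ yields a loop $\alpha\colon S^1\to U$ that is essential in $Y$. Because $Y\subseteq X$ with $X$ one-dimensional Hausdorff, Lemma \ref{onedimwildinclusion} gives that the inclusion $Y\hookrightarrow X$ is $\pi_1$-injective, so $\alpha$ is essential in $X$ as well. Picking any $v\in V$, the set $\im(\alpha)\times\{v\}$ is contained in $U\times V$, hence in $B\setminus A$.

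At this point Lemma \ref{torusobstructionlemma} applies directly to the closed subset $B\setminus A$ of $X\times X$: there is an essential loop $\alpha$ in $X$ and a point $v\in X$ with $\im(\alpha)\times\{v\}\subseteq B\setminus A$, so $B\setminus A$ cannot admit a motion plan in $X$. This contradicts the hypothesis, and the proof is complete. I do not expect any genuine obstacle; the only delicate point is making sure that the essential loop produced by wildness of $y$ in $Y$ remains essential when viewed in $X$, which is exactly where the one-dimensional hypothesis is used via Lemma \ref{onedimwildinclusion}.
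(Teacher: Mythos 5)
Your proposal is correct and follows essentially the same argument as the paper: contradiction via Lemma \ref{productlemma}, a product neighborhood of $(y,z)$ missing $A$ (using that $A$ is closed), an essential loop near $y$ made essential in $X$ via Lemma \ref{onedimwildinclusion}, and then Lemma \ref{torusobstructionlemma} applied to $B\backslash A$. The only cosmetic difference is that you pair the loop with an arbitrary $v\in V$ while the paper uses $z$ itself, and you make the $\pi_1$-injectivity step explicit where the paper leaves it implicit.
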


\begin{proof}
Suppose that $B\backslash A$ admits a motion plan in $X$. Suppose to the contrary that $Y,Z\subseteq X$ are such that $Y\times Z\subseteq B$ and such that $\fw(Y\times Z)$ meets $B\backslash A$ at a point $(y,z)$. Recall from Lemma \ref{productlemma} that $\fw(Y\times Z)=Y\times \fw(Z)\cup \fw(Y)\times Z$. Then it must be the case that either $y\in \fw(Y)$ or $z\in \fw(Z)$. Suppose without loss of generality that $y\in \fw(Y)$. Since $A$ is closed, we may find open sets $U,V\subseteq X$ such that $(y,z)\in (U\times V)\cap B\subseteq B\backslash A$. Since $y\in\fw(Y)$, we can find a loop $\alpha:S^1\to Y\cap U$ that is essential in $Y$. By Corollary \ref{pioneinjectivecor}, $\alpha$ is essential in $X$. Since $\im(\alpha)\times \{z\}\subseteq (Y\cap U)\times \{z\}\subseteq (U\times V)\cap B\subseteq B\backslash A$, we obtain a contradiction to Lemma \ref{torusobstructionlemma}.
\end{proof}

\begin{lemma}\label{tclowerboundlemma}
If $X$ is a one-dimensional Hausdorff space and $\rk(X)=n<\infty$, then $\tc(X)\geq 2n-2$. Moreover,
\begin{enumerate}
\item if $\fw^{n-1}(X)$ contains a simple closed curve, then $\tc(X)\geq 2n-1$;
\item if $X$ is a Peano continuum and $\fw^{n-1}(X)$ contains two distinct simple closed curves, then $\tc(X)\geq 2n$.
\end{enumerate}
\end{lemma}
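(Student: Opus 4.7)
The plan is to mimic, on $X\times X$ rather than $X$, the recursive argument used for Lemma \ref{rankboundlemma} but driven by the motion-plan obstruction (Lemma \ref{tcobstructionlemma}) in place of its category counterpart. The product formula $\fw(Y\times Z)=Y\times \fw(Z)\cup \fw(Y)\times Z$ (Lemma \ref{productlemma}) causes each step of the descent to peel off \emph{two} filtration levels instead of one, which is exactly where the factor of $2$ in $2n-2$ originates.

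Suppose $\tc(X)=m$ with closed filtration $\emptyset=F_{-1}\subseteq F_0\subseteq\cdots\subseteq F_m=X\times X$ such that each $F_j\setminus F_{j-1}$ admits a motion plan in $X$. For $k\ge 0$ set $W_k:=\bigcup_{i+j=k}\fw^i(X)\times \fw^j(X)$. I will prove by induction that $W_k\subseteq F_{m-k}$ for every $k\ge 0$, with the convention $F_{\ell}=\emptyset$ for $\ell<0$. The base case $W_0=X\times X=F_m$ is immediate. For the inductive step, fix $(i,j)$ with $i+j=k+1$; without loss of generality $i\ge 1$, so the inductive hypothesis gives $\fw^{i-1}(X)\times \fw^j(X)\subseteq W_k\subseteq F_{m-k}$, and Lemma \ref{tcobstructionlemma} applied to the closed pair $F_{m-k-1}\subseteq F_{m-k}$ yields
\[\fw^i(X)\times \fw^j(X)\subseteq \fw\bigl(\fw^{i-1}(X)\times \fw^j(X)\bigr)\subseteq F_{m-k-1},\]
where the first inclusion is a case of Lemma \ref{productlemma}. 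Since $\rk(X)=n$ guarantees $\fw^{n-1}(X)\neq\emptyset$, the square $\fw^{n-1}(X)\times \fw^{n-1}(X)\subseteq W_{2n-2}$ is nonempty, which forces $F_{m-(2n-2)}$ to be nonempty and therefore $m\ge 2n-2$.

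For (1), suppose toward contradiction that $m=2n-2$, so $W_{2n-2}\subseteq F_0$. Choose a simple closed curve $C\subseteq \fw^{n-1}(X)$; then $C\times C\subseteq F_0$. By Lemma \ref{onedimwildinclusion} the inclusion $C\hookrightarrow X$ is $\pi_1$-injective, so a generating loop $\alpha$ of $C$ remains essential in $X$. For any $x\in C$ we have $\im(\alpha)\times\{x\}\subseteq F_0=F_0\setminus F_{-1}$, which contradicts Lemma \ref{torusobstructionlemma} since $F_0$ admits a motion plan. For (2), suppose $m=2n-1$, so $W_{2n-2}\subseteq F_1$; with two distinct simple closed curves $C_1,C_2\subseteq \fw^{n-1}(X)$ the subspace $G=C_1\cup C_2$ is a finite graph in $X$ with more than one cycle, and $G\times G\subseteq \fw^{n-1}(X)\times \fw^{n-1}(X)\subseteq F_1$. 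Proposition \ref{a2prop} applied with $A_0=F_0$ and $A_1=F_1$ then asserts that at least one of $F_0$ or $F_1\setminus F_0$ fails to admit a motion plan in $X$, contradicting the choice of filtration.

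The principal obstacle is the inductive claim $W_k\subseteq F_{m-k}$: each step requires combining Lemma \ref{tcobstructionlemma} (which needs a rectangle $Y\times Z\subseteq F_{m-k}$) with the symmetric product formula so that both the $i\ge 1$ and the $j\ge 1$ cases feed back correctly into $W_{k+1}$. A secondary concern is identifying $C_1\cup C_2$ as a finite graph in $X$ in case (2): if the two curves share a nontrivial arc rather than a finite set of points, one may need to pass to a finite subgraph inside $C_1\cup C_2$ possessing two cycles before invoking Proposition \ref{a2prop}.
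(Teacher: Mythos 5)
Your argument is essentially the paper's own proof: the same descent via Lemma \ref{productlemma} and Lemma \ref{tcobstructionlemma} (the paper iterates in the first coordinate then the second rather than inducting on $W_k$, a cosmetic difference), the same use of Lemma \ref{torusobstructionlemma} for (1), and Proposition \ref{a2prop} for (2). Your self-flagged caveat in case (2) is the right fix: $C_1\cup C_2$ need not literally be a finite graph when the curves intersect badly, but it always contains one with two cycles (e.g.\ $C_1$ together with an arc of $C_2$ meeting $C_1$ only at its endpoints), which is exactly what the paper invokes.
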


\begin{proof}
Suppose $\tc(X)=m$ and $\emptyset=A_{-1}\subseteq A_0\subseteq A_1\subseteq \cdots\subseteq A_{m}=X\times X$ are closed subsets such that $A_{j+1}\backslash A_j$ admits a motion plan in $X$ for all $j$. Note that if $\fw^{i}(X)\times \fw^{j}(X)\subseteq A_{k}$, then Lemmas \ref{productlemma} and \ref{tcobstructionlemma} imply that \[\fw^{i}(X)\times \fw^{j+1}(X)\cup \fw^{i+1}(X)\times \fw^{j}(X)=\fw(\fw^{i}(X)\times \fw^{j}(X))\subseteq A_{k-1}.\] Starting with $X\times X=\fw^{0}(X)\times\fw^{0}(X)\subseteq A_{m}$ and recursively taking the wild set in the first coordinate $(n-1$)-times, we have $\fw^{n-1}(X)\times X\subseteq A_{m-n+1}$. We continue by recursively taking the wild in the second coordinate $(n-1)$-times to obtain $\fw^{n-1}(X)\times \fw^{n-1}(X)\subseteq A_{m-2n+2}$. Since $\fw^{n-1}(X)\times \fw^{n-1}(X)\neq \emptyset$, we have $m-2n+2\geq 0$. Thus $m\geq 2n-2$.

For (1), suppose $\fw^{n-1}(X)$ contains a simple closed curve. Then there exists an essential loop $\alpha:S^1\to \fw^{n-1}(X)$ based at a point $x\in \fw^{n-1}(X)$. By Lemma \ref{onedimwildinclusion}, $\alpha$ is also essential in $X$. Now $\im(\alpha)\times\{x\}\subseteq \fw^{n-1}(X)\times \fw^{n-1}(X)\subseteq A_{m-2n+2}$ and thus by Lemma \ref{torusobstructionlemma}, $A_{m-2n+2}$ cannot admit a motion plan. In particular, $m-2n+2\geq 1$ and thus $m\geq 2n-1$.

For (2), suppose $X$ is a Peano continuum and $\fw^{n-1}(X)$ contains two distinct simple closed curves. Then there exists a (not necessarily connected) finite simplicial graph $G\subseteq \fw^{n-1}(X)$ with at least two cycles. Since $G\times G\subseteq A_{m-2n+2}$, Proposition \ref{a2prop} ensures that $m-2n+2\geq 2$. Thus $m\geq 2n$.
\end{proof}

\begin{lemma}\label{tcupperbound2lemma}
If $X$ is a $\fw$-stable one-dimensional Peano continuum with $\rk(X)=n<\infty$, then
\[\tc(X)\leq \begin{cases}
2n-2, & \text{if }\fw^{n-1}(X)\text{ contains no simple closed curve},\\
2n, & \text{ otherwise}.
\end{cases}\]
\end{lemma}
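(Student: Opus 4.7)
The plan is to construct, for each of the three cases, a closed filtration of $X \times X$ whose successive differences admit motion plans in $X$. First I would apply Lemma \ref{stablfiltrationlemma} to get the nested sequence $X = Y_0 \supseteq Y_1 \supseteq \cdots \supseteq Y_n = \emptyset$ and turn it into a filtration of $X$. In Case A, where $\fw^{n-1}(X)$ contains no simple closed curve, Theorem \ref{graphtheorem} makes $Y_{n-1}$ a finite disjoint union of dendrites and therefore categorical in $X$; hence $F_k := Y_{n-1-k}$ for $0 \le k \le n-1$ is a filtration of $X$ by $n$ closed sets with all differences categorical in $X$. In Cases B and C I would prepend an extra layer $F_{-1} = Z$, where $Z$ is the union of maximal spanning trees inside the graph deformation-retracts of the components of $Y_{n-1}$, exactly as in the proof of Lemma \ref{stableupperboundlemma}; then $F_{-1}$ and $F_0 \setminus F_{-1}$ are both categorical in $X$, so the filtration has $n+1$ categorical differences.

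Next I would apply the product construction $H_k = \bigcup_{i+j \le k} F_i \times F_j$. As in the proof of the product inequality in Theorem \ref{thmcatproperties}(4), each $H_k \setminus H_{k-1}$ decomposes as a pairwise separated disjoint union of products $(F_i \setminus F_{i-1}) \times (F_j \setminus F_{j-1})$ with $i+j = k$, and each such product has both factors categorical in $X$, making the projections $p_1$ and $p_2$ both null-homotopic; the product therefore admits a motion plan in $X$ and the separately defined plans combine continuously. This immediately gives $\tc(X) \le 2n-2$ in Case A (with $n$ categorical pieces on $X$) and $\tc(X) \le 2n$ in Case C (with $n+1$).

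For Case B the extra saving of one layer comes from the special structure of the unique simple closed curve. Since $Y_{n-1}$ is homotopy equivalent to the disjoint union of one copy of $S^1$ with finitely many contractible Peano continua, I would combine the two-piece motion plan on $S^1 \times S^1$ from Example \ref{graphexample} with fixed paths in $X$ between components to show $\tc_X(Y_{n-1} \times Y_{n-1}) \le 1$: there exist closed sets $A_0 \subseteq A_1 = Y_{n-1} \times Y_{n-1}$ such that both $A_0$ and $A_1 \setminus A_0$ admit motion plans in $X$. I would then modify the Case C product filtration by setting $\widetilde{H}_0 = A_0$, $\widetilde{H}_1 = Y_{n-1} \times Y_{n-1}$, and $\widetilde{H}_k = H_{k+1}$ for $2 \le k \le 2n-1$, producing a closed filtration of $X \times X$ with only $2n$ sets and hence $\tc(X) \le 2n-1$.

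The main obstacle is that the shifted layers $\widetilde{H}_k \setminus \widetilde{H}_{k-1}$ for $k \ge 2$ now absorb pieces from two consecutive levels of the original product filtration, so the resulting decomposition need not be pairwise separated in the strict closure-disjoint sense used in Case A (for instance, the closure of $(Y_{n-2} \setminus Y_{n-1}) \times (Y_{n-1} \setminus Z)$ meets $(Y_{n-2} \setminus Y_{n-1}) \times Z$ at endpoints of the arcs in $Y_{n-1} \setminus Z$). I plan to resolve this by coordinating the choice of motion plans on the individual product pieces, using the canonical retractions coming from Lemma \ref{stablfiltrationlemma}(3) that describe exactly how the dendritic attachments sit between consecutive $Y_j$'s, so that the plans agree at the non-separated boundary points and glue into a single continuous motion plan on each $\widetilde{H}_k \setminus \widetilde{H}_{k-1}$.
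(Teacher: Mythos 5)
Your Case A and Case C arguments follow the paper's route and are fine (your Case C, which first refines the filtration of $X$ itself by the spanning-tree layer $Z$ and then takes products, is equivalent to the paper's refinement of each $G_l\times G_m$ into $T_l\times T_m\subseteq G_l\times T_m\cup T_l\times G_m\subseteq G_l\times G_m$). The genuine gap is in Case B. You build the $(2n+1)$-term Case C product filtration and then try to compress it to $2n$ terms by setting $\widetilde H_0=A_0$, $\widetilde H_1=Y_{n-1}\times Y_{n-1}$ and shifting the remaining levels down by one. As you yourself note, each shifted difference $\widetilde H_k\setminus \widetilde H_{k-1}$ for $k\geq 2$ then absorbs pieces from two consecutive levels of the original product filtration, and these pieces are not separated. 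Your proposed repair --- coordinating the individual plans so that they ``agree at the non-separated boundary points'' --- is not a proof: a section of $\ev_{0,1}$ defined piecewise on a union of non-closed, non-separated pieces is not continuous merely because the values match where closures meet; one would need a genuine extension argument at every accumulation point, and there is no reason the categorical contractions on two neighbouring product cells can be made compatible in this way. As written, Case B does not go through.

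The compression is also unnecessary, and avoiding it recovers the paper's argument. Keep the coarse product filtration built from the $n$-term filtration $F_0=Y_{n-1}\subseteq F_1=Y_{n-2}\subseteq \cdots \subseteq F_{n-1}=X$, namely $H_k=\bigcup_{i+j=k}F_i\times F_j$ for $0\leq k\leq 2n-2$, so that every difference $H_k\setminus H_{k-1}$ with $k\geq 1$ is a separated union of products of categorical sets and hence admits a motion plan, exactly as in your Case A. Then refine \emph{only} the bottom cell $H_0=F_0\times F_0$ by inserting $A_0$ beneath it: since $F_0$ strongly deformation retracts onto a disjoint union $G$ of one circle and finitely many points, pull the two-piece circle plan of Example \ref{graphexample} back along $\phi\times\phi$, where $\phi\colon F_0\to G$ is the retraction, to see that $A_0$ and $H_0\setminus A_0$ admit motion plans in $X$. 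The filtration $A_0\subseteq H_0\subseteq H_1\subseteq \cdots\subseteq H_{2n-2}$ consists of $2n$ closed sets, no two levels are merged, and every successive difference admits a motion plan, giving $\tc(X)\leq 2n-1$.
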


\begin{proof}
By Lemma \ref{stablfiltrationlemma}, there exists closed sets $\emptyset=F_0\subseteq F_1\subseteq F_2\subseteq \cdots \subseteq F_n=X$ (re-indexing from the lemma by $F_j=Y_{n-j}$) such that
\begin{enumerate}
\item $F_j$ is a finite disjoint union of Peano continua for $1\leq j\leq n$,
\item $F_{j}\backslash F_{j-1}$ is a disjoint union of contractible sets for each $2\leq j\leq n$,
\item $F_1$ deformation retracts onto $\fw^{n-1}(X)$.
\end{enumerate}
Since $F_1$ need not be categorical, we can only conclude that $F_{j}\backslash F_{j-1}$ is categorical in $X$ for all $2\leq j\leq n$.

Since $X$ is $\fw$-stable, $\fw^{n-1}(X)$ is a finite disjoint union of one-dimensional Peano continua (Proposition \ref{stableprop}) with no wild points. Mimicking the proof of the inequality $\cat(X\times X)\leq 2\,\cat(X)$ (see Section \ref{secLScatandTC}), we define $H_k=\bigcup_{i+j=k}F_i\times F_j$ for $k=2,3,4,\dots,2n$ and $1\leq i,j\leq n$. For $2\leq k\leq 2n-1$, $H_{k+1}\backslash H_k$ is the disjoint union $\bigcup_{j=1}^{k}((F_{j}\backslash F_{j-1})\times (F_{k-j+1}\backslash F_{k-j}))$ where $(F_{j}\backslash F_{j-1})\times (F_{k-j+1}\backslash F_{k-j})$ is categorical in $X\times X$ whenever $2\leq j\leq k-1$ (and thus admits a motion plan in $X$). However, when $j\in\{1,k\}$ it may not be the case that the sets $F_1\times (F_{k}\backslash F_{k-1})$ and $(F_{k}\backslash F_{k-1})\times F_1$ are categorical in $X\times X$.

\textbf{Case (1):} If $\fw^{n-1}(X)$ admits no simple closed curve, then both $\fw^{n-1}(X)$ and $F_1$ are finite disjoint unions of dendrites. In this case, $F_1$ is categorical in $X$ and it follows that each of the sets $H_2,H_{3}\backslash H_2,H_{4}\backslash H_{3},\dots H_{2n}\backslash H_{2n-1}$ is categorical in $X\times X$ and therefore admits a motion plan in $X$. Hence, the filtration $H_2\subseteq H_3\subseteq \cdots \subseteq H_{2n}=X\times X$ gives $\tc(X)\leq 2n-2$.

\textbf{Case (2):} In the general case, we extend the filtration from Case (1). Since $X$ is $\fw$-stable, $F_1$ is a finite disjoint union of Peano continua $C_1,C_2,\dots,C_r$. Since $F_1$ is homotopy equivalent to $\fw^{n-1}(X)$, and $\fw^{n}(X)=\emptyset$, we have $\fw(F_1)=\emptyset$. Thus each $C_m$ is a one-dimensional Peano continuum containing no wild points. Theorem \ref{graphtheorem} ensures that each $C_m$ strongly deformations retracts onto a (possibly degenerate) finite graph $G_m\subseteq C_m$ where $G_m$ has no free edges. In particular, we take $G_m$ to contain a single point if $C_m$ is a dendrite. Thus $F_1$ strongly deformation retracts onto $G=\bigcup_{m=1}^{r}G_m$. Take $\phi:F_1\to G$ to be a retraction and a homotopy equivalence. Then $\phi\times\phi:F_1\times F_1\to G\times G=\coprod_{1\leq l,m\leq r}G_l\times G_m$ is also a retraction and a homotopy equivalence. Thus, if a set $A\subseteq G\times G$ admits a motion plan in $G$, then $(\phi\times \phi)^{-1}(A)$ admits a motion plan in $F_1$ (and thus in $X$). 

For each $1\leq m\leq r$, let $T_m$ be a maximal spanning tree in $G_m$. For each pair $(l,m)$, let $K^{l,m}_{0}=T_l\times T_m$, $K^{l,m}_{1}=G_l\times T_m\cup T_l\times G_m$, and $K^{l,m}_{2}=G_l\times G_m$. Note that the sets $K^{l,m}_{0}$, $K^{l,m}_{1}\backslash K^{l,m}_{0}$, and $K^{l,m}_{2}\backslash K^{l,m}_{1}$ (for $l\neq m$) are all categorical in $G\times G$ and therefore admit a motion plan in $G$. Define $H_0=(\phi\times \phi)^{-1}(\bigcup_{1\leq l,m\leq r}K^{l,m}_{0})$ and $H_1=(\phi\times \phi)^{-1}(\bigcup_{1\leq l,m\leq r}K^{l,m}_{1})$. Recall that $H_2=F_1\times F_1=(\phi\times \phi)^{-1}(G\times G)$.

Then $H_0\subseteq H_1\subseteq H_2$ where $H_0$, $H_1\backslash H_0$, and $H_2\backslash H_1$ admit motion plans in $X$ since they are finite disjoint unions of sets that admit motion plans in $X$. Hence, the filtration $H_0\subseteq H_1\subseteq H_2\subseteq H_3\subseteq \cdots \subseteq H_{2n}=X\times X$ gives $\tc(X)\leq 2n$.
\end{proof}

\begin{proof}[Proof of Theorem \ref{mainthm4}]
The theorem follows directly by combining the lower bounds in Lemma \ref{tclowerboundlemma} and the upper bounds in Lemma \ref{tcupperbound2lemma}.
\end{proof}

\begin{remark}
As indicated in the introduction, the authors suspect that Lemma \ref{tcupperbound2lemma} can be strengthened to include the improved upper bound $\tc(X)\leq 2n-1$ when $\fw^{n-1}(X)$ contains a single simple closed curve. Unfortunately, our methods used to prove Lemma \ref{tcupperbound2lemma} break down in this case. In Example \ref{wildcircleexample} below, we show that odd values $\tc(X)=2n-1\geq 3$ can occur. However, our construction of a motion plan in this example is ad-hoc and does not easily generalize.
\end{remark}

\begin{example}
Consider the Peano continuum $X$ illustrated in Figure \ref{fig:rankexample}. We have $\rk(X)=3$ where $\fw^2(X)=\ast$. Since $\fw^2(X)$ does not contain a simple closed curve, we have $\cat(X)=2$ and $\tc(X)=2(3)-2=4$.
\end{example}

\begin{example}
It is possible for $\tc(X)$ to achieve the maximum value of $2n$ in Theorem \ref{mainthm4} (when $\rk(X)=n<\infty$). This occurs when $\fw^{n-1}(X)$, which is a finite union of one-dimensional Peano continua, contains more than two cycles. Such a space $X$ can be constructed as follows. Let $n>1$ and $X_1$ be a (not necessarily connected) non-empty finite graph with more than one cycle. Since $X_1$ is a compact metrizable space, \cite[Theorem 1.2]{BrazasMitra} gives the existence of a one-dimensional Peano continuum $X_2$ for which $\fw(X_2)=X_1$. Proceeding as in the first paragraph of Example \ref{nonstableexample}, we may recursively apply \cite[Theorem 1.2]{BrazasMitra} to obtain one-dimensional Peano continua $\emptyset=X_0\subseteq X_1\subseteq X_2\subseteq \cdots \subseteq X_{n}=X$ such that $\fw(X_k)=X_{k-1}$ for all $1\leq k\leq n$. By construction, $X$ is $\fw$-stable. In this case, we have $\rk(X)=n$ and $\fw^{n-1}(X)=X_1$. Since $X_1$ contains more than one cycle, we have $\tc(X)=2n$. See Figure \ref{fig:maxexample} for a planar example in the case $n=2$. \end{example}

\begin{figure}[h]
    \centering
    \includegraphics[scale=0.7]{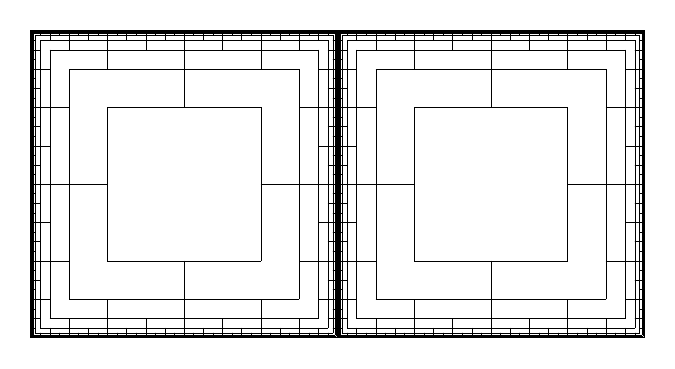}  
    \caption{A one-dimensional Peano continuum $X$ whose wild set $\fw(X)$ is a graph with two cycles. Since $\rk(X)=2$, we have $\tc(X)=4$ by Theorem \ref{mainthm4}.
}
    \label{fig:maxexample}
\end{figure} 

In our next two examples, we show that it is possible to use an ad-hoc construction to give an exact value for $\tc(X)$ in the indeterminate (middle) case of Theorem \ref{mainthm4}.

\begin{example}\label{wildcircleexample}
Consider the planar Peano continuum $X$ illustrated in Figure \ref{fig:wildcircle}. For simplicity, let $C=\fw(X)$ denote the central circle and let $C_1,C_2,C_3,\dots$ be an enumeration of the remaining circles attached to $C$ with intersection $C\cap C_i=\{c_i\}$. Since $\rk(X)=2$, we have $\cat(X)=2$ by Theorem \ref{mainthm2}. However, since $\fw(X)$ contains exactly one simple closed curve, Theorem \ref{mainthm4} gives the indeterminate value $\tc(X)\in\{3,4\}$. We will show that $\tc(X)=3$ by constructing an explicit motion planning filtration.

Since $C_i$ is a circle, we take $A_i\subseteq C_{i}^{2}$ to be the set of antipodal pairs of points in $C_i$. Then $A_i$ admits a motion plan by counterclockwise rotation and $C_{i}^{2}\backslash A_i$ is homeomorphic to the open unit disk. Similarly, we take $A$ to be the set of antipodal pairs of points in $C$ with a counterclockwise motion plan. Let $\Delta_{C}=\{(x,x)\mid x\in C\}$ denote the diagonal of the wild set. Define
\begin{enumerate}
\item $F_0=\Delta_{C}\cup A\cup \bigcup_{i\in\bbn}A_i$,
\item $F_1= C^2\cup \bigcup_{i\in \bbn}C_{i}^{2}$,
\item $F_2=F_1\cup\bigcup_{i\in\bbn}C\times C_i\cup C_i\times C$,
\item $F_3=X\times X$.
\end{enumerate}
Note that $F_0$ and $F_1$ are closed since $\Delta_{C}$ is closed and contains all missing limit points of $\bigcup_{i\in\bbn}A_i$ (and $\bigcup_{i\in\bbn}C_{i}^2$). Similarly, $F_2$ is closed since missing limit points of $\bigcup_{i\in\bbn}C\times C_i\cup C_i\times C$ are contained in $C^2$.

The set $F_0$ admits a motion plan by restricting to the indicated motion plans for $A$ and each $A_i$. Diagonal points $(x,x)\in \Delta_{C}$ are assigned to the constant path at $x$. The set $F_1\backslash F_0$ consists of pairs $(x,y)$ where $x$ and $y$ are non-antipodal points in one of the circles $C,C_1,C_2,C_3,\dots$. We assign such a pair to the standard geodesic path from $x$ to $y$. This defines a motion plan on $F_1\backslash F_0$. We remark that while each of the components of $F_1\backslash F_0$ is homeomorphic to an open disk that is open in $F_1$, these disks are not open in $X$ nor are they of null-diameter. One should regard the tori $C_{i}^{2}$ as shrinking in diameter and limiting on the diagonal circle $\Delta$ of $C^2$ as $i\to\infty$.

The set $F_2\backslash F_1$ consists of pairs $(x,y)$ where one of $x$ or $y$ is in some set $C_i\backslash\{c_i\}$ and the other is in $C\backslash \{c_i\}$. If $x\in C\backslash\{c_i\}$ and $y\in C_i\backslash \{c_i\}$, we assign path $\alpha$ which is the counterclockwise path in $C$ from $x$ to $c_i$ concatenated with the counterclockwise path in $C_i$ from $c_i$ to $y$ (with standard parameterizations). To $(y,x)$ we use assign the reverse path $\alpha^{-1}$ of $\alpha$. Lastly, $F_3\backslash F_2$ consists of pairs $(x,y)$ where $x\in C_i\backslash C$ and $y\in C_j\backslash C$ for $i\neq j$. Following the previous case, we assign such a pair to a concatenation of three counterclockwise-moving paths, again with standard parameterizations, from $x$ to $c_i$ in $C_i$, $c_i$ to $c_j$ in $C$, and $c_j$ to $y$ in $C_j$. 

The above filtration gives upper bound $\tc(X)\leq 3$. Thus $\tc(X)=3$. It follows from Meilstrup's classification \cite{Meilstrup} that $X$ is (non-canonically) homotopy equivalent to the space illustrated in Figure \ref{fig:yexample} and so this polygonal model also has topological complexity of $3$. However, constructing motion plans for the polygonal model is much more complicated.
\end{example}

\begin{figure}[ht]
    \centering
    \includegraphics[height=2in]{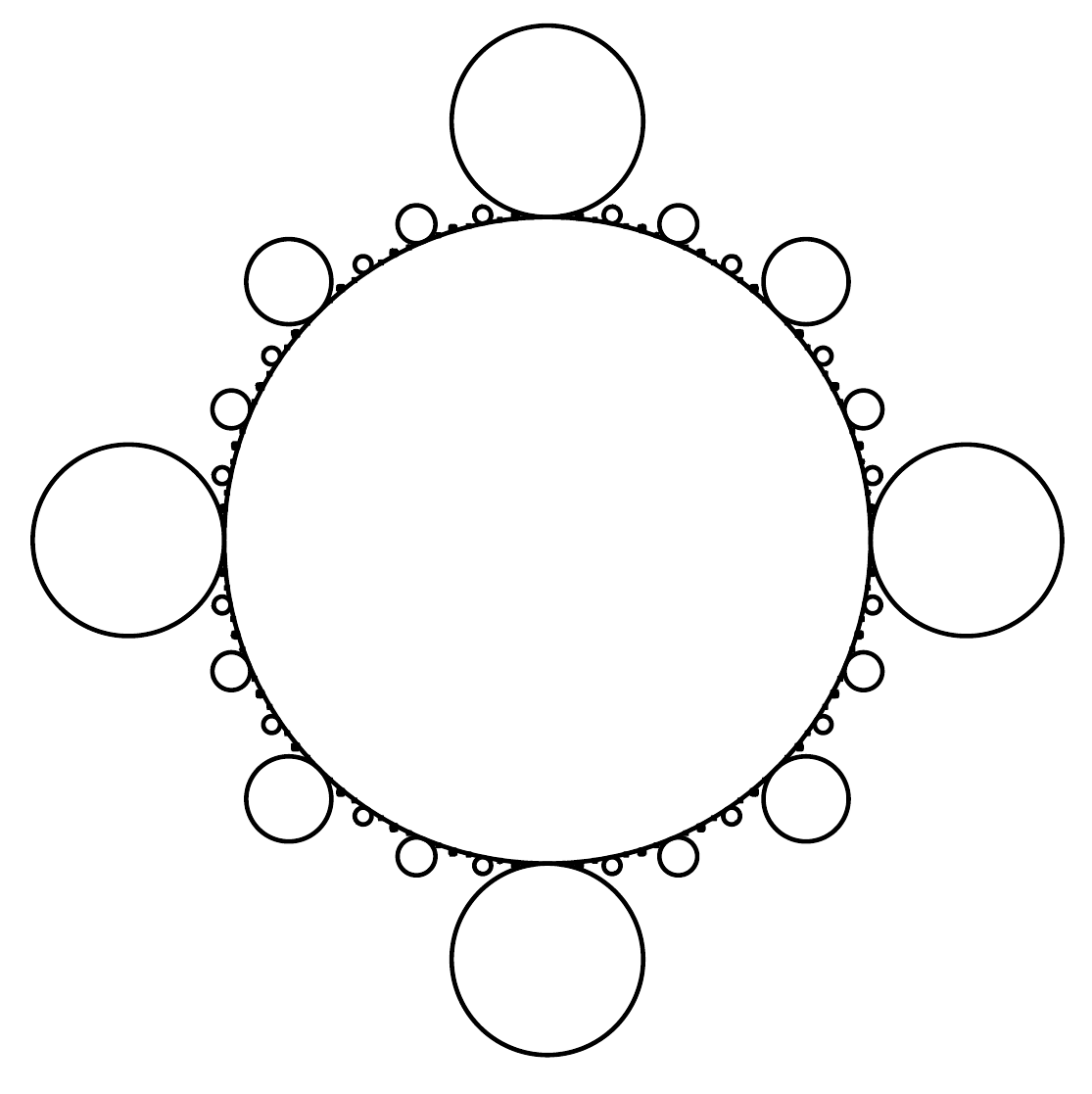}  
    \caption{A one-dimensional planar Peano continuum whose wild set is a circle and for which $\tc(X)=3$.}
    \label{fig:wildcircle}
\end{figure} 

\begin{example}\label{wildwildcircle}
Let $X$ be the planar Peano continuum from Example \ref{wildcircleexample} and recall the notation established in that example. Let $\{d_1,d_2,d_3,\dots\}$ be a set of points in $\bigcup_{i\in\bbn}C_i\backslash C$ that is dense in $X$ and let $D_1,D_2,D_3,\dots$ be a sequence of pairwise-disjoint circles of null-diameter such that $D_j\cap X=\{d_j\}$. For simplicity, we may assume that $D_j\backslash\{d_j\}$ lies in the unbounded component of $\bbr^2\backslash X$. Then $X'=X\cup \bigcup_{j\in\bbn}D_j$ is a one-dimensional planar Peano continuum with $\fw(X')=X$. Since $\rk(X')=3$ and $\fw^2(X')=C$, we have $\tc(X')\in \{5,6\}$ by Theorem \ref{mainthm4}. Let $B_j\subseteq D_{j}^2$ denote the set of antipodal pairs in $D_j$ and let $\Delta_X=\{(x,x)\mid x\in X\}$ be the diagonal of $X$. Define
\begin{enumerate}
\item $G_0=\Delta_X\cup F_0\cup \bigcup_{j\in\bbn}B_j$,
\item $G_1=G_0\cup F_1\cup \bigcup_{j\in\bbn}D_{j}^2$,
\item $G_2=G_1\cup F_2\cup \bigcup_{d_j\in C_i}C_i\times D_j\cup D_j\times C_i$,
\item $G_3=G_2\cup F_3\cup \bigcup_{j\in\bbn}C\times D_j\cup D_j\times C$,
\item $G_4=G_3\cup \bigcup_{d_j\notin C_i}C_i\times D_j\cup D_j\times C_i$,
\item $G_5=X'\times X'$.
\end{enumerate}
Motion plans for the sets $G_0,G_1\backslash G_0,G_2\backslash G_1,\dots, G_5\backslash G_4$ can be constructed in a straightforward way that extends those in Example \ref{wildcircleexample}. In particular, for $2\leq k\leq 5$, the set $G_{k}\backslash G_{k-1}$ contains pairs $(x,y)$ of points from distinct circles (among the circles $C,C_i,D_j$) such that each arc from $x$ to $y$ meets at least $k$ circles. Thus $\tc(X')=5$.
\end{example}

\begin{remark}
The simple geometric nature of the space $X$ from Example \ref{wildcircleexample} and the space $X'$ from Example \ref{wildwildcircle} is what allows us to explicitly construct an optimal motion-planning filtration in each case. From these two examples, there is an evident pattern, which can be used to realize spaces whose topological complexity is an arbitrarily large odd integer. Unfortunately, the idea does not appear to generalize to more complicated one-dimensional Peano continua. The difficulty is compounded in examples of higher wild rank since one cannot change the homeomorphism type of any wild set without changing the homotopy type of the overall space (recall Corollary \ref{hequivonedcor}). 
\end{remark}

\noindent\thanks{{\bf Funding.} This research was supported by the Slovenian Research Agency program P1-0292 and grant J1-4001.}


\begin{thebibliography}{99}
\expandafter\ifx\csname url\endcsname\relax

\fi
\expandafter\ifx\csname urlprefix\endcsname\relax

\fi



\bibitem{BrazasMitra} J. Brazas, A. Mitra, \emph{Higher dimensional wild sets}, Preprint. 2025. arXiv:2505.23665.


\bibitem{BFpants} J. Brazas, H. Fischer, \emph{On the failure of the first Cech homotopy group to register geometrically relevant fundamental group elements}. Bulletin of the London Math. Soc. 52 (2020), no. 6, 1072-1092.

\bibitem{CConedim} J.W.~Cannon, G.R.~Conner, \textit{On the fundamental groups of one-dimensional spaces}, Topology Appl. 153 (2006) 2648--2672.

\bibitem{ConnerMeilstrup} G.R. Conner, M. Meilstrup, \emph{Deforestation of Peano continua and minimal deformation retract}, Topology Appl. 159 (2012) 3253-3262.

\bibitem{CLOT} O. Cornea, G. Lupton, J. Oprea, D.Tanr\' e, \emph{Lusternik-Schnirelmann Category}, AMS, Math. Surv. Monogr., vol. 103 (2003).


\bibitem{CurtisFortaspehrical} M. L. Curtis and M. K. Fort, \emph{Homotopy groups of one-dimensional spaces}, Proc. Amer. Math. Soc. 8 (1957), 577-579.


\bibitem{Edaonedim} K. Eda, \emph{Homotopy types of one-dimensional Peano continua}, Fund. Math. 209 (2010) 27-42.

\bibitem{Edazerodim} K. Eda, \emph{One dimensional Peano continua with zero dimensional wild parts}. Fund. Math. 259 (2022) 243-253.

\bibitem{Farber1} M. Farber \emph{Topological Complexity of Motion Planning}, Discrete and 
Computational Geometry 29 (2003), no. 2, 211-221.

\bibitem{Farber2008}
M.~Farber, \emph{Invitation to Topological Robotics}, (EMS Publishing House, Zurich, 2008)

\bibitem{Fort} M. K. Fort, Jr., \emph{Mappings of S1 into one-dimensional spaces}, Illinois J. Math. vol. 1 (1957) 505-508.

\bibitem{FZ07} H.~Fischer and A.~Zastrow, \emph{Generalized universal covering spaces and the shape group}, Fund. Math. 197 (2007) 167-196.

\bibitem{GarciaCalcines} J.M. Garcia-Calcines, \emph{A note on covers defining relative and sectional categories}, Topology Appl. 265 (2019) 106810.


\bibitem{James} I.M. James, \emph{On category, in the sense of Lusternik-Schnirelmann}, Topology 17 (1978), 331-348.

\bibitem{Meilstrup} M.~Meilstrup, \textit{Classifying homotopy types of one-dimensional Peano continua}, 2005, Masters Thesis, Brigham Young University.


\bibitem{Nadler} S.B.~Nadler Jr., \textit{Continuum theory}, M. Dekker, New York, Basel and Hong Kong, 1992.

\bibitem{Pavesic2013}
P. Pave\v si\' c, \emph{Formal aspects of topological complexity}, Zb. Pr. Inst. Mat. NAN Ukr. 6 (2013), 56--66.

\bibitem{Pavesic} 
P. Pave\v si\' c, \emph{Topological complexity of a map}, Hom. Homol. Appl. 21 (2019), 
no. 2, 107-130.


\bibitem{Pavesic1} 
P. Pave\v si\' c, \emph{Topological complexity of a map}, arXiv:1809.09021. 


\bibitem{Recio-Mitter}
Recio-Mitter, D., \emph{Geodesic complexity of motion planning}. J Appl. and Comput. Topology 5, 141–178 (2021).

\bibitem{Scott}
J. Scott, \emph{On the topological complexity of maps}, Top. Appl. 314 (2022), paper 108094.

\bibitem{Spanier66} E.~Spanier, \textit{Algebraic Topology}, McGraw-Hill, 1966.


\bibitem{Srinivasan2} T. Srinivasan, \emph{The Lusternik-Schnirelmann category of metric spaces}, Topology Appl. 167 (2014), 87-95.


\end{thebibliography}
\end{document}